\documentclass[11pt,letterpaper]{amsart}

\usepackage{color}

\usepackage{graphicx, epstopdf}
\usepackage{cases}
\usepackage{bbm}
\usepackage{amssymb}
\usepackage{txfonts}
\usepackage{amscd}
\usepackage{amsfonts,latexsym,amsmath,amsxtra,mathdots,amssymb,latexsym,mathtools}
\usepackage{mathabx}
\usepackage[all,cmtip]{xy}


\usepackage{colordvi}
\usepackage{multicol}
\usepackage{hyperref}
\usepackage{tikz-cd}
\usepackage{float}
\usepackage{setspace, floatflt}
\usepackage{tensor}
\usepackage[normalem]{ulem}

\allowdisplaybreaks

 \newcommand{\BC}{{\mathbb {C}}} 
 \newcommand{\BG}{{\mathbb {G}}} 
  
\newcommand{\BM}{{\mathbb {M}}} \newcommand{\BN}{{\mathbb {N}}} 
 \newcommand{\BR}{{\mathbb {R}}} 
 \newcommand{\BV}{{\mathbb {V}}} 
\newcommand{\BY}{{\mathbb {Y}}} \newcommand{\BZ}{{\mathbb {Z}}}

 \newcommand{\CO}{{\mathcal {O}}}


\newcommand{\GL}{{\mathrm {GL}}}

\newcommand{\Hom}{\mathrm{Hom}}
  
\newcommand{\Ad}{\mathrm{Ad}}

\newcommand{\ra}{\rightarrow}

\def\fra{\mathfrak{a}}

\def\-{^{-1}}

\renewcommand{\Re}{{\mathrm{Re}\,}}

\makeatletter
\g@addto@macro\normalsize{\setlength\abovedisplayskip{3pt}}
\makeatother

\makeatletter
\g@addto@macro\normalsize{\setlength\belowdisplayskip{3pt}}
\makeatother

\newcommand{\delete}[1]{}

\theoremstyle{plain}

\newtheorem{thm}{Theorem}[section] \newtheorem{cor}[thm]{Corollary}
\newtheorem{lem}[thm]{Lemma}  \newtheorem{prop}[thm]{Proposition}
\newtheorem{prob}{Problem}[section]
 \newtheorem{defn}[thm]{Definition}

\newtheorem {rem}[thm]{Remark}

\newtheorem {as}[thm]{Assumption}

\numberwithin{equation}{section}




\newcommand{\rat}{\textup{Rat}}

\begin{document}

	\title{On local intertwining periods}

	\author{Nadir Matringe, Omer Offen and Chang Yang}
	\address{Université Paris Cité, IMJ-PRG, 8 Pl. Aurélie Nemours, 75013 Paris, France}
	\email{nadir.matringe@imj-prg.fr}
	\address{Department of Mathematics, Brandeis University, 415 South Street, Waltham, MA
		02453, USA}
	\email{offen@brandeis.edu}
	\address{Key Laboratory of High Performance Computing and Stochastic Information Processing (HPCSIP), Hunan Normal University, School of Mathematics and Statistics, Changsha, 410081, China}
	\email{cyang@hunnu.edu.cn}

	\keywords{}
	
\maketitle

\begin{abstract}
	We prove the absolute convergence, functional equations and meromorphic continuation of local intertwining periods on parabolically induced representations of finite length for certain symmetric spaces over local fields of characteristic zero, including Galois pairs as well as pairs of Prasad and Takloo-Bighash type. Furthermore, for a general symmetric space we prove a sufficient condition for distinction of an induced representation in terms of distinction of its inducing data. Both results generalize previous results of the first two named authors. In particular, for both we remove a boundedness assumption on the inducing data and for the second we further remove any assumption on the symmetric space. Moreover, when the inducing representation is uniformly bounded, we extend the field of cofficients from $p$-adic to any local field of characteristic zero. In fact this extension holds for all finite length representations under a natural generic irreducibility assumption for parabolic induction. In the case of $p$-adic symmetric spaces, combined with the necessary conditions for distinction that follow from the geometric lemma, this provides a necessary and sufficient condition for distinction of representations induced from cuspidal.
	
	\noindent \textbf{Keywords:} Intertwining periods, Distinguished representations, $p$-adic symmetric spaces
\end{abstract}

\section{Introduction}

	Let $F$ be either a $p$-adic field or $F=\mathbb{R}$ and let $G$ be the  group of $F$-points of a connected reductive group defined over $F$. When $F=\mathbb{R}$ we further assume that as a real reductive Lie group, the group $G$ is of inner type in the sense of \cite[2.2.8]{RRG-I}, i.e. that $\Ad(G)$ is a subgroup of the inner automorphisms of the complexification of the Lie algebra of $G$. Let $\theta$ be an $F$-involution on $G$ and $H=G^{\theta}$ the subgroup of points in $G$ fixed by $\theta$. Let 
  	\begin{align*}
 		X = \{ g\in G \mid \theta(g) = g^{-1}\}
    \end{align*}
	be the associated symmetric space equipped with the twisted $G$-action given by $g \cdot x = g x \theta(g)^{-1}$. In this note we study local intertwining periods on parabolically induced representations of $G$ associated to parabolic orbits in $X$.
	
	Let $P$ be a parabolic subgroup of $G$ with Levi subgroup $M$. An element $x\in X$ is called \emph{$M$-admissible} if $x\theta(M)x^{-1}=M$, that is, if the involution 
	\[
	\theta_x = \textup{Ad}(x) \circ \theta
	\] 
	on $G$ attached to $x$ stabilizes $M$. Fix an $M$-admissible $x\in X$. Let $\sigma$ be a smooth admissible representation of $M$ of finite length, $\ell \in \Hom_{M^{\theta_x}}(\sigma, \delta_{P^{\theta_x}}\delta_P^{-1/2})$ and $\chi$ an unramified character of $M$ anti-invariant under $\theta_x$ (that is, such that $\chi\circ \theta_x=\chi^{-1}$ on $M$). Here for a subgroup $Q$ of $G$, $\delta_Q$ is the modulus character of $Q$. Let $I_P^G(\sigma)$ be the representation of $G$ obtained by normalized parabolic induction from $\sigma$. By a local intertwining period associated to $x$ we mean the linear form on the representation $I_P^G(\sigma \otimes \chi)$ formally defined by the integral
	\begin{align*}
		\int_{P^{\theta_x} \backslash G^{\theta_x}} \ell (\varphi_{\chi} (g)) dg
	\end{align*}
	for all $\varphi_{\chi} \in I_P^G(\sigma \otimes \chi)$. We address the following question. 
	
\begin{prob}\label{prob intper}
Prove that the above integral converges absolutely in a certain sufficiently positive cone and admits a meromorphic continuation in $\chi$.
\end{prob}	
	
In order to formulate our first result we introduce the {\emph{modulus assumption}}. We say that an $M$-admissible $x\in X$ satisfies the modulus assumption if the character $\delta_{P^{\theta_x}}\delta_P^{-1/2}$ is trivial on $M^{\theta_x}$.

In Theorem \ref{thm::main-1} we solve Problem \ref{prob intper} under the modulus assumption when $F$ is $p$-adic, and under an extra natural generic irreducibility assumption (see Assumption \ref{as}) satisfied for example by unitary $\sigma$, when $F$ is real. 
In the context of either a Galois symmetric pair (that is, when $\theta$ is a Galois involution) or a pair of the Prasad and Takloo-Bighash type \cite{Nadir-Broussous-PTB-MultiOne}, the modulus assumption is automatically satisfied.

When $F$ is $p$-adic, Theorem \ref{thm::main-1} is a generalization of \cite[Theorem 1]{Matringe-Offen-InttPeriods-PLMS} where the first two named autors solve Problem \ref{prob intper} for Galois pairs under the assumption that $\sigma$ has bounded matrix coefficients. The proof of loc. cit. extends verbatim to the context of symmetric pairs where the modulus assumption is satisfied.
In order to remove the assumption that $\sigma$ has bounded matrix coefficients, however, we introduce in this work a different idea into the proof. 
In \cite{Matringe-Offen-InttPeriods-PLMS}, intertwining periods are studied inductively along a certain directed graph associated to parabolic orbits in $X$. This closely follows the strategy developed in \cite{J-L-R-periods-JAMS} and \cite{Lapid-Rogawski-periods-Galois} for global intertwining periods. The basis of induction there is a \emph{minimal} vertex in the graph. In this work we reverse the direction of induction and start from a \emph{maximal} vertex, following the strategy in \cite{Matringe-Gamma-IntertwiningPeriod-JFA} for some Galois pairs consisting of inner forms of $\GL_n$. This allows us to consider any finite length, smooth representation $\sigma$ of $M$.

For a character $\chi$ of $H$, a representation $\pi$ of $G$ is $(H,\chi)$-distinguished if it admits a non-zero $(H,\chi)$-equivariant linear form (that is, if $\Hom_H(\pi,\chi)\ne 0$). When $\chi$ is the trivial character we use $H$-distinction for $(H,\chi)$-distinction. 

In \cite[Proposition 3]{Matringe-Offen-InttPeriods-PLMS} the theory of intertwining periods is applied to deduce a sufficient condition for $H$-distinction of $I_P^G(\sigma)$ in terms of distinction of $\sigma$. In this work we observe that for the argument to work we do not need Problem \ref{prob intper} to be fully solved. In Theorem \ref{thm::distinction-sufficient} we prove in the context of a general symmetric pair that if $\sigma$ is a finite length representation of $M$ and $x\in G\cdot e\subseteq X$ is $M$-admissible and such that $\sigma$ is $(M^{\theta_x},\delta_{P^{\theta_x}}\delta_P^{-\frac12})$-distinguished then $I_P^G(\sigma)$ is $H$-distinguished. Under the modulus assumption we can prove absolute convergence on a cone of the intertwining period associated to a maximal vertex on the directed graph. A key observation is that without the modulus assumption the intertwining period can still be defined as a convergent (in a cone) iterated integral. Based on the structure of the aforementioned graph, this allows us to define a meromorphic family of $H$-invariant linear forms associated to any vertex on the graph. This is achieved by composing the intertwining period associated to a maximal vertex with a standard intertwining operator between induced representations. 
Along the way we use the fact that for finite length induced representations intertwining operators are generically invertible. For the convenience of the reader, and under Assunption \ref{as} when $F=\BR$, we provide the ingredients of the proof of this fact in Proposition \ref{prop::invert--intertwinner}.
	
	The theory of intertwining periods is a useful tool in the study of both local and global distinction problems (see for example \cite{FLO-Distinction+UnitaryGroup--IHES}, \cite{Matringe-Inner+GLn+Steinberg}, \cite{Matringe-Gamma-IntertwiningPeriod-JFA}, \cite{XueHang-Suzuki-EpsilonoDichotomy}). We refer to \cite[Remark 3]{Matringe-Offen-InttPeriods-PLMS} for a brief introduction. The results of this paper will be used in forthcoming works where we discuss the transfer of distinguished representations under the global Jacquet-Langlands correspondence. Another application we have in mind is a local analogue of the Jacquet-Rallis residue method (see \cite{Jacquet-Rallis-92-Symplectic}) as presented in \cite{Pollack-Wan-Zydor-Duke} which we hope to come back to later.
	
	The paper is organized as follows. In Section \ref{sec::notation} we introduce the relevant notation and recall some basic facts necessary for our set up. In Section \ref{sec::intt-periods} we recall the theory of intertwining periods associated to open parabolic orbits due to Brylinsky--Delorme, Carmona--Delorme and Blanc--Delorme, and that attached to closed parabolic orbits. In Section \ref{sec::maximal} we discuss the notions of maximal parabolic orbits, and maximal vertices in a graph associated to the involution $\theta$ on $G$ and introduced by Lapid and Rogawski in order to analyze parabolic orbits in $X$. Our main result on intertwining periods is proved in Section \ref{sec::main}, together with its applications to distinction of parabolically induced representations in the $p$-adic and the real case.
	
\textbf{Acknowledgments.} The authors thank the referee for his/her excellent work. Chang Yang was supported by the National Science Foundation of China (No. 12001191).

\section{Notation and preliminaries}\label{sec::notation}

Let $F$ be a local field of characteristic zero. Since complex groups can be viewed as real groups, in the archimedean case we always assume that $F=\mathbb{R}$. Let $\lvert \cdot \rvert$ be the normalized absolute value on $F$. Let $\BG$ be a connected reductive group defined over $F$ and $G = \BG(F)$. We recall that when $F=\mathbb{R}$, we moreover assume that $G$ is of inner type. For a locally compact group $Q$ let $\delta_Q$ be its modulus character.

\subsection{Symmetric spaces}\label{sec::symmetric space}

Let $\theta$ be an involution on $\BG$ defined over $F$ and $H = G^{\theta}$ be the subgroup of fixed points of $G$ under $\theta$. If $F$ is archimedean we fix a Cartan involution $\theta_c$ commuting with $\theta$ (\cite[Proposition 11.17]{Helminck-Wang-Involution}). To the symmetric pair $(G,\theta)$ we associate the symmetric space
\begin{align*}
	X = \{ g \in G \mid \theta(g) = g^{-1}  \}
\end{align*}
equipped with the twisted $G$-action given by $g \cdot x = gx\theta(g)^{-1}$, $g \in G$, $x \in X $. For $x \in X$ we denote by $\theta_x$ the involution on $G$ given by 
\begin{align*}
	\theta_x (g) = x \theta(g) x^{-1}.
\end{align*}
For any subgroup $Q$ of $G$ and $x \in X$, let $Q_x$ be the stabilizer of $x$ in $Q$. Note that 
\[
Q_x =Q^{\theta_x}= (Q \cap \theta_x(Q))^{\theta_x}.
\]

\subsection{Parabolic subgroups, roots and Weyl groups}

Start with the following conventions: if $\BY$ is an algebraic subvariety of an algebraic variety $\BV$, both defined over $F$, we will denote by $Y$ the group of $F$-points of $\BY$, and say that $Y$ is an algebraic subvariety of $V$. If $Y$ is actually an algebraic subgroup of $V$, we denote by $\mathfrak{y}$ the Lie algebra of $Y$, i.e. we use gothic letters for Lie algebras. Moreover $Y$ will inherit the same terminology as that of $\BY$ such as split torus if $\BY$ is a torus which is split over $F$. Finally if $i$ is an automorphism of $\BV$ defined over $F$, we denote by $Y^i$ the subset of 
$Y$ fixed by $i$, and by $Y^{-i}$ the subset of $Y$ on which $i$ acts as the inverse map. We use similar convetions for 
$\mathfrak{y}^i$ and $\mathfrak{y}^{-i}$ when $Y$ is an algebraic subgroup of $V$.

Fix a maximal split torus $T_0$ of $G$ which is $\theta$-stable (see \cite[Lemma 2.4]{Helminck-Wang-Involution}). If $F=\mathbb{R}$ we moreover impose that it is $\theta_c$-stable thanks to \cite[Proposition 11.18, (i)]{Helminck-Wang-Involution}. In such a situation, \cite[2.2.5]{RRG-I} and the fact that $T_0$ is maximally split imply that the subspace $\mathfrak{t}_0^{-\theta_c}$ is a maximally abelian subspace of $\mathfrak{g}^{-\theta_c}$, and it is clearly $\theta$-stable. 

We now fix a minimal parabolic subgroup $P_0$ of $G$ containing $T_0$. Let $M_0$ be the centralizer of $T_0$ in $G$. A parabolic subgroup $P$ of $G$ is called standard if it contains $P_0$. If $P$ is a standard parabolic subgroup of $G$, then it contains a unique Levi subgroup $M$ containing $M_0$. Such a Levi subgroup $M$ is called standard. Let $U$ be the unipotent radical of $P$, then $P = M \ltimes U$ is the standard Levi decomposition of $P$.

In what follows, unless otherwise specified, by a parabolic (resp. Levi) subgroup we mean a standard parabolic (resp. Levi) subgroup. By writing $P= M\ltimes U$ or $Q=L\ltimes V$ we always mean the standard Levi decomposition with $M,L$ being Levi subgroups and $U,V$ being unipotent radicals respectively.

Let $M$ be a Levi subgroup of $G$ and $T_M$ the maximal split torus in the center of $M$. Set $\fra_{M}^*  =  \rat(M) \otimes_{\BZ} \BR$ where $\rat(M)$ is the group of rational characters of $\BM$, and denote the dual space by $\fra_{M}$. Write $\fra_0$ and $\fra_0^*$ for $\fra_{M_0}$ and $\fra_{M_0}^*$ respectively. When $M \subset L$, there is a canonical direct sum decomposition $\fra_{M} = \fra_L \oplus \fra_M^L$. A similar decomposition holds for the dual space. 
For $\lambda\in \fra_0^*$ let $\lambda_M$ be the corresponding projection of $\lambda$ to $\fra_M^*$. 

For a real vector space $\fra$ we denote by $\fra_{\BC} = \fra \otimes_{\BR} \BC$ its complexification. 

Fix a maximal compact subgroup $K$ adapted to $P_0$ (see \cite[V5.1, Theorem]{Renard-book-repn}) in the $p$-adic case, and $K=G^{\theta_c}$ when $F=\mathbb{R}$. In both cases we thus have the Iwasawa decomposition $G=P_0K$ (see \cite[2.1.8]{RRG-I} when $F=\mathbb{R}$). Let $P = M\ltimes U$ be a parabolic subgroup of $G$. Let $H_M : M \ra \fra_{M}$ be the Harish-Chandra map given by
\begin{align*}
	e^{\langle \chi,H_M(m) \rangle}  = \lvert \chi (m) \rvert, \quad \chi \in \rat(M), \ m \in M.
\end{align*}
By abuse of notation we also denote by $H_M$ its unique extension  to $G= UMK$ that is left $U$-invariant and right $K$-invariant.

Let $P = M\ltimes  U \subset Q = L\ltimes V$ be two parabolic subgroups. Denote by $R(T_M,L)$ the set of roots of $T_M$ in $L$. We say that $\alpha > 0$ for $\alpha \in R(T_M,L)$ if $\alpha \in R(T_M,P \cap L)$ and $\alpha < 0$ otherwise. Recall that $R(T_0,L)$ forms a root system and let $\Delta_0^L$ be its basis of simple roots with respect to $P_0 \cap L$. Let $\Delta_M^L$ be the set of non-zero restrictions to $T_M$ of the elements of $\Delta_0^L$. The set $\Delta_M^L$ forms a basis of $(\fra_M^L)^*$. We sometimes also denote $\Delta_M^L$ by $\Delta_P^Q$. When $L =G$, we often omit the superscript $G$. Note that $R(T_0,G)$ lies in $\fra_0^*$. For every $\alpha \in R(T_0,G)$ we denote by $\alpha^{\vee} \in \fra_0$ the corresponding coroot. For $\alpha \in R(T_M,G)$, we choose $\alpha_0\in R(T_0,G)$ such that the projection of $\alpha_0$ to $\fra_{M}^*$ is $\alpha$. We then define $\alpha^{\vee}$ as the projection of $\alpha_0^{\vee}$ to $\fra_{M}$. This is independent of the choice of lifting $\alpha_0$.

Let $\rho_0 \in \fra_0^*$ be the half-sum of the positive roots of $T_0$ (counted with multiplicities). Let $\rho_P$ be the projection of $\rho_0$ on $\fra_{M}^*$. The modulus function $\delta_P$ on $P$ is then given by $e^{ \langle 2 \rho_P, H_M(\cdot) \rangle}$.

Let $W = N_G(T_0) / M_0$ be the Weyl group of $G$ with respect to $T_0$. For a Levi subgroup $M$ of $G$ let $W^M = N_M(T_0) / M_0$ be the Weyl group of $M$ with respect to $T_0$. For two Levi subgroups $M$ and $M'$ let ${}_{M'}W_M$ be the set of Weyl elements $w\in W$ that are of minimal length in $W^M w W^{M'}$. It is a complete set of representatives for the double cosets $W^{M'} \backslash W /W^M$. For two Levi subgroups $M \subset L$ let $W^L(M)$ be the set of elements $w \in W^L$ such that $w$ is of minimal length in $wW^M$ and $w M w^{-1}$ is a standard Levi subgroup of $L$. Set $W(M) = W^G(M)$. According to \cite[I.1.7, I.1.8]{Moeglin-Waldspurger-95-spectral}, one can decompose elements of $W(M)$ into products of elementary symmetries attached to simple roots in $\Delta_{M'}^G$ for Levi subgroups $M'$ of $G$ that are conjugate to $M$. In turn, this allows one to define a length function $\ell_M$ on $W(M)$. There is a unique element in 
$W^L(M)$ for which $\ell_M$ is maximal, and we denote it by $w_M^L$. Set $w_0^L=w_{M_0}^L$ and $w_0=w_0^G$.

\subsection{The involution $\theta'$}\label{sec::inlolution-theta'}

As $\theta(P_0)$ is not necessarily $P_0$, we carry out the modification as in \cite[Section 2.3]{Offen-ParabolicInduction-JNT}. Let $\tau \in W$ be the unique element such that $\theta(P_0) = \tau P_0 \tau^{-1}$. Fix a representative $\tilde\tau$ of $\tau$ in $N_G(T_0)$ and define the automorphism $\theta'$ of $G$ by $\theta'(g) = \tilde\tau^{-1} \theta(g) \tilde\tau$. Then $\theta'$ preserves $T_0$ and $P_0$. It therefore acts on $\fra_0^*$ and $\fra_0$ and preserves $\Delta_0$. Note that although $\theta'$ may not be an involution on $G$ it induces an involution on $\fra_0^*$. In what follows we continue to denote by $\theta$ and by $\theta'$ the corresponding involutions they induce on $\fra_0$ and its dual.

\subsection{Representations}\label{sec::reps}

By a representation of a reductive subgroup of $G$ we always mean a smooth complex valued representation. 

In the $p$-adic case, we consider the terms admissible and of finite length in the algebraic sense as in \cite{Bernstein-Zelevinsky}. Now we discuss in more details the case when $F$ is real. In this case, representations in this work are always SF, i.e. smooth Fréchet representations of moderate growth (see \cite{Bernstein-Krotz}). We say that a representation of $G$ is admissible (see \cite[Chapter 11]{Wallach-RRG-II} or \cite{Bernstein-Krotz}) if its subspace of $K$-finite vectors is a Harish-Chandra module, i.e. a finitely generated and amdissible $(\mathfrak{g}, K)$-module, or equivalently a $(\mathfrak{g}, K)$-module of finite length (see \cite[Section 3.3 and Theorem 4.2.1]{RRG-I}). We recall from \cite[Chapter 11]{Wallach-RRG-II} and \cite{Bernstein-Krotz} that the map assigning to an admissible representation $\sigma$ the Harish-Chandra module $\sigma_f$ of its dense subspace of K-finite vectors is an equivalence of categories, and that one can always realize $\sigma$ as the dense subspace of smooth vectors in a continuous Hilbert representation $\overline{\sigma}$. (Analogues of these results also exist in the $p$-adic case and would make some proofs in this paper more uniform, however, in the $p$-adic case we prefer to opt for a more algebraic approach.) A final but noteworthy observation is that 
an admissible representation of $G$ is irreducible if and only if it is simple as a $\mathcal{S}(G)$-module, where $\mathcal{S}(G)$ is the convolution algebra of rapidly decreasing smooth functions on $G$ defined in \cite[7.1.2]{RRG-I}. In particular admissible representations can be seen as finite length $\mathcal{S}(G)$-modules. So when $F=\mathbb{R}$ the terms admissible and of finite length can be interchanged for 
representations of $G$.

\subsection{Parabolic induction}\label{sec::para-induction+intertw-operator}

Let $P=M\ltimes U$ be a parabolic subgroup of $G$ and $\sigma$ a representation of $M$. We denote by $I_P^G(\sigma)$ the representation of $G$ by right translations on the space of functions $\varphi$ on $G$ with values in the space of $\sigma$ which satisfy
\begin{align*}
	\varphi (um g) = \delta_P(m)^{1/2} \sigma(m) \varphi(g), \quad u \in U, \ m \in M,\ g \in G,
\end{align*}
and are right invariant by some compact open subgroup of $G$ when $F$ is $p$-adic, and smooth on $G$ hen $F=\mathbb{R}$. We refer 
to \cite[Section 6.3]{Chen-Sun} for other realizations of induced representations when $F$ is real, that shall be used later. In the real case, if $\sigma$ is moreover admissible, another construction giving rise to the same space is the following (\cite{Vogan-Wallach} and \cite[Appendix A]{WallachRT} for the fact that the assumption on smooth vectors in \cite{Vogan-Wallach} is always satisfied for our representations). Take any Hilbert globalization $\overline{\sigma}$ of 
$\sigma$, then there is the usual notion of normalized parabolic induction for continuous representations on Hilbert spaces giving birth to $I_P^G(\overline{\sigma})$, and we have $I_P^G(\sigma)=I_P^G(\overline{\sigma})^\infty$ (the right hand side is the space of smooth vectors, in particular this construction does not depend on the choice of $\overline{\sigma}$). 

For $\lambda \in \fra_{M,\BC}^*$ and $\varphi \in I_P^G(\sigma)$ write $\varphi_{\lambda} (g) = e^{ \langle \lambda, H_M(g) \rangle} \varphi (g)$, the twist of $\varphi$ by $\lambda$. Let $I_P^G(\sigma,\lambda)$ be the representation of $G$ on the space of $I_P^G(\sigma)$ given by
\begin{align*}
	(I_P^G(g,\sigma,\lambda)\varphi)_{\lambda}(x) = \varphi_{\lambda} (xg).
\end{align*}
Let $\sigma[\lambda]$ denote the representation of $M$ on the space of $\sigma$ given by $\sigma[\lambda](m) = e^{\langle \lambda, H_M(m) \rangle} \sigma (m)$. The map $\varphi \mapsto \varphi_{\lambda}$ is an isomorphism of representations $I_P^G(\sigma,\lambda) \ra I_P^G(\sigma[\lambda])$.

Let $Q = L\ltimes V$ be a parabolic subgroup of $G$ containing $P$. Transitivity of parabolic induction is the natural isomorphism $F: I_P^G(\sigma) \ra I_Q^G(I_{P \cap L}^L (\sigma))$, $\varphi \mapsto F_{\varphi}$ of $G$-representations defined by
\begin{align*}
	F_{\varphi}(g)(l) = \delta_Q^{-1/2}(l)\varphi(lg),\ \ \ l\in L,\,g\in G.
\end{align*}
For $\lambda \in \fra^*_{L,\BC}$ we have the following commutative diagram of isomorphisms between representations
\begin{equation*}
	\begin{tikzcd}
		I_P^G(\sigma,\lambda) \arrow[r,"F"] \arrow[d]    & I_Q^G(I_{P \cap L}^L (\sigma),\lambda) \arrow[d]  \\
		I_P^G(\sigma [\lambda]) \arrow[r]  & I_Q^G\left( I_{P \cap L}^L (\sigma)[\lambda] \right).
	\end{tikzcd}
\end{equation*}
Here each vertical arrow is the twist by $\lambda$, an isomorphism of representations, and the bottom horizontal arrow is the unique isomorphism that makes the diagram commute. 
Explicitly, we have
\begin{align}\label{formula::holo-section-transitivity}
	(F_{\varphi})_{\lambda}(g)(l) =  e^{ -\langle \lambda + \rho_Q , H(l) \rangle} \varphi_{\lambda}(lg),\ \ \ l\in L,\,g\in G.
\end{align}

\subsection{Standard intertwining operators}
Let $P=M\ltimes U$ be a parabolic subgroup of $G$ and $(\sigma,V_\sigma)$ an \textit{admissible} representation of $M$.
Let $w \in W(M)$ and choose a representative $n$ of $w$ in $N_G(T_0)$. Let $M' = wMw^{-1}$ and $P' = M'\ltimes U'$ be the corresponding parabolic subgroup. 

Let $w\sigma$ be the representation of $M'$ on the space of $\sigma$ given by $w\sigma(m)=\sigma(n^{-1}mn)$, $m\in M'$. (The isomorphism class of this representation is independent of $n\in w$). For $c \in \BR$ set
\begin{align*}
	\mathcal{D}^{M,w} (c)= \{ \lambda \in \fra_{M}^*  \mid \langle \lambda, \alpha^{\vee} \rangle > c, \ \forall \alpha \in R(T_M,G), \alpha >0,\ w\alpha < 0  \}.
\end{align*}
There is a $c >0$ such that for $\lambda \in \fra_{M,\BC}^*$ with $\Re \lambda \in \mathcal{D}^{M,w} (c)$, and for any $g\in G$, the integral \begin{align}\label{formula::defn--intertwinner}
	( M(n,\sigma,\lambda)\varphi )_{w \lambda} (g) = \int_{U' \cap wUw^{-1} \backslash U' }  \varphi_{\lambda} (n^{-1} ug) du
\end{align} converges absolutely. In the $p$-adic case, where we refer to \cite[Section 2]{Shahidi-L-Function-AJM-81}, the convergence of the above integral means that for any $v^{\vee}$ in the smooth dual of $\sigma$, the following scalar valued integral
\begin{align*} 
	\int_{U' \cap wUw^{-1} \backslash U' }  v^{\vee}(\varphi_{\lambda} (n^{-1} ug)) du
\end{align*}
converges. In the real case, following \cite{Vogan-Wallach} which applies in our situation thanks to \cite[Appendix A]{WallachRT}, this means that the above integral converges absolutely in the sense of \textit{Bochner integrals}: in particular this requires taking a Hilbert globalization $(\overline{\sigma},H_{\overline{\sigma}})$ of $\sigma$, then the integral \eqref{formula::defn--intertwinner} is defined as an element of the Hilbert space $H_{\overline{\sigma}}$. In fact 
\cite[Lemma 1.3]{Vogan-Wallach} implies that $M(n,\sigma,\lambda)\varphi_\lambda$ is smooth, i.e. belongs to $I_{P'}^G(w\sigma)$. One checks that this in particular implies that the integral \eqref{formula::defn--intertwinner} is in fact a vector in $V_\sigma \subseteq H_{\overline{\sigma}}$. Hence both in the real and $p$-adic case (see \cite[Section 2]{Shahidi-L-Function-AJM-81} for details), when $\Re \lambda \in \mathcal{D}^{M,w} (c)$, this defines a standard intertwining operator 
\begin{align*}
	M(n,\sigma,\lambda) : I_P^G(\sigma,\lambda) \ra I_{P'}^G(w\sigma,w\lambda).
\end{align*}

The intertwining operator $M(n,\sigma,\lambda)$ admits meromorphic continuation to $\fra_{M,\BC}^*$, \cite[Theorem 2.2.2]{Shahidi-L-Function-AJM-81} when $F$ is $p$-adic and \cite[Theorem 1.13]{Vogan-Wallach} for $F$ real. In fact these references prove the following stronger fact: for any $\lambda_0\in \fra_{M,\BC}^*$, there is a rational map $R$ on $\fra_{M,\BC}^*$ such that $R(\lambda_0)M(n,\sigma,\lambda_0)$ makes sense and defines a nonzero intertwining operator from $I_P^G(\sigma,\lambda)$ to $I_{P'}^G(w\sigma,w\lambda)$.

We now record a useful result whose proof can be found in \cite[Lemma 7.2]{Matringe-Gamma-IntertwiningPeriod-JFA}, when $F$ is $p$-adic. 

\begin{lem}\label{lem::intertw'+linearform}
	Let $\ell$ be a linear form on the space of $\sigma$. Suppose that $\lambda \in \mathcal{D}^{M,w} (c)$ for $c$ large enough such that $M(w,\sigma,\lambda)$ is given by convergent integrals, and let $g \in G$ and $\varphi \in I_P^G(\sigma)$ be such that the integral
	\begin{align*}
		\int_{ wUw^{-1} \cap U' \backslash U'}  \lvert \ell(\varphi_{\lambda}(n^{-1}ug)) \rvert du
	\end{align*}
	converges. Then
	\begin{align*}
		\ell( (M(w,\sigma,\lambda)\varphi)_{w\lambda}(g) ) = \int_{wUw^{-1} \cap U' \backslash U'} \ell (\varphi_{\lambda}(n^{-1}ug) )du.
	\end{align*}
\end{lem}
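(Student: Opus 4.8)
The statement is essentially a Fubini-type interchange of a linear form $\ell$ with the integral defining the intertwining operator $M(w,\sigma,\lambda)$, valid under the stated local absolute-integrability hypothesis. The key point is that $\ell$ is applied to a \emph{vector} in $V_\sigma$, so we must first justify that the integrand in \eqref{formula::defn--intertwinner} genuinely takes values in $V_\sigma$ before $\ell$ can even be pushed inside. The plan is therefore to treat the $p$-adic and archimedean cases separately, reducing each to a scalar Fubini argument. In the $p$-adic case I would argue as follows. Choose $c$ large enough that for $\Re\lambda \in \mathcal{D}^{M,w}(c)$ the integral \eqref{formula::defn--intertwinner} converges, as recalled before the lemma; by the convention there, this means that for every $v^\vee$ in the smooth dual $\sigma^\vee$ the scalar integral $\int v^\vee(\varphi_\lambda(n^{-1}ug))\,du$ converges absolutely and represents the pairing $\langle v^\vee, (M(w,\sigma,\lambda)\varphi)_{w\lambda}(g)\rangle$. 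Since $\sigma$ is admissible of finite length, the map $\varphi \mapsto \varphi(\cdot)$ lands in a finitely generated module and, crucially, for a fixed $\varphi$ (right-invariant under a compact open subgroup) the values $\varphi_\lambda(n^{-1}ug)$ all lie in a single finite-dimensional subspace $V_0 \subseteq V_\sigma$ as $u$ ranges over $U'$ — this is the standard observation that $\varphi$ has image in a finite-dimensional space after restricting to any compact set and using the $K$-finiteness/compact-openness, combined with the decomposition of the domain $U' \cap wUw^{-1}\backslash U'$ into countably many cosets of a compact open subgroup. On a finite-dimensional space, $\ell$ is automatically continuous for the unique Hausdorff topology, and the vector-valued integral is a (countable) sum of scalar multiples of vectors in $V_0$; hence applying $\ell$ commutes with the integral by dominated convergence, the domination being exactly the hypothesis that $\int |\ell(\varphi_\lambda(n^{-1}ug))|\,du < \infty$. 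This is the content already proved in \cite[Lemma 7.2]{Matringe-Gamma-IntertwiningPeriod-JFA}, which I would cite for the $p$-adic case and recapitulate only in outline.

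For the archimedean case the additional subtlety is that the integral \eqref{formula::defn--intertwinner} is a priori only a Bochner integral in a Hilbert globalization $(\overline\sigma, H_{\overline\sigma})$, so I would proceed in two stages. First, as recalled in the text, \cite[Lemma 1.3]{Vogan-Wallach} together with \cite[Appendix A]{WallachRT} guarantees that $M(n,\sigma,\lambda)\varphi_\lambda$ is a smooth vector, i.e. lies in $I_{P'}^G(w\sigma,w\lambda)$, and in particular its value at $g$ is a vector in $V_\sigma \subseteq H_{\overline\sigma}$. Second, I would show that $\ell$ may be pushed through the Bochner integral on the domain $wUw^{-1}\cap U'\backslash U'$: for any continuous functional on $H_{\overline\sigma}$ this is immediate from the defining property of the Bochner integral, so the identity holds for all $\ell$ extending continuously to $H_{\overline\sigma}$; to handle a general linear form $\ell$ on $V_\sigma$ one uses that the hypothesis $\int |\ell(\varphi_\lambda(n^{-1}ug))|\,du < \infty$ lets us regard $u \mapsto \ell(\varphi_\lambda(n^{-1}ug))$ as an $L^1$ function and approximate the Bochner integral by Riemann-type finite sums $\sum_j c_j\, \varphi_\lambda(n^{-1}u_j g)$ converging in $H_{\overline\sigma}$, with the scalar sums $\sum_j c_j\,\ell(\varphi_\lambda(n^{-1}u_jg))$ simultaneously converging to $\int \ell(\varphi_\lambda(n^{-1}ug))\,du$ by absolute integrability; passing to a subsequence along which $\varphi_\lambda(n^{-1}u_j g) \to (M(w,\sigma,\lambda)\varphi)_{w\lambda}(g)$ in the finite-dimensional (hence topology-canonical) subspace of $V_\sigma$ spanned by the relevant values — this subspace being finite-dimensional because $\varphi_\lambda(n^{-1}\cdot g)$ restricted to the compact pieces in a suitable exhaustion has image in the finite-dimensional $M_0$-type spaces, by admissibility — and applying $\ell$ (continuous there) finishes.

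\textbf{Main obstacle.} The genuine difficulty is not the Fubini interchange per se — once everything is reduced to scalars it is routine dominated convergence — but rather the justification that $\ell$, which is merely an abstract linear form on the smooth vectors $V_\sigma$ with no assumed continuity, can be legitimately applied under the integral sign. In the $p$-adic case this is clean because $\varphi$ being fixed forces the integrand into a finite-dimensional subspace where $\ell$ is automatically continuous; in the archimedean case one must combine the smoothness of the output vector (Vogan–Wallach) with an argument confining the relevant integrand values to a finite-dimensional subspace on which $\ell$ is continuous, and this confinement uses admissibility of $\sigma$ together with the $K$-finiteness structure of $\varphi$. I expect the write-up to devote most of its effort to making this finite-dimensionality reduction precise in the real case, after which the identity follows by the same dominated-convergence argument as in \cite[Lemma 7.2]{Matringe-Gamma-IntertwiningPeriod-JFA}.
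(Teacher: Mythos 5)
Your reduction of the $p$-adic case to \cite[Lemma 7.2]{Matringe-Gamma-IntertwiningPeriod-JFA} matches what the paper does (it only writes out the case $F=\mathbb{R}$), but your archimedean argument has a genuine gap, and it sits exactly where the real difficulty lies. Your plan is to apply $\ell$ to Riemann-type approximations of the Bochner integral and pass to the limit inside ``the finite-dimensional subspace of $V_\sigma$ spanned by the relevant values,'' justified ``by admissibility.'' That finite-dimensionality claim is false: writing $n^{-1}ug=u'mk$ via the Iwasawa decomposition, one has $\varphi_\lambda(n^{-1}ug)=e^{\langle\lambda+\rho_P,H_M(n^{-1}ug)\rangle}\sigma(m)\varphi(k)$, and the vectors $\sigma(m)v$ for $m$ ranging over an infinite subset of $M$ do not lie in a finite-dimensional subspace of an infinite-dimensional $\sigma$; admissibility bounds the dimensions of isotypic components, but the values $\varphi_\lambda(n^{-1}ug)$ are not confined to finitely many types. (The same objection applies to your parenthetical justification in the $p$-adic case, though there the cited lemma does the work anyway.) Without this step, your argument only covers those $\ell$ that extend continuously to the Hilbert globalization $H_{\overline{\sigma}}$, a strictly smaller class than the continuous functionals on the Fr\'echet space $V_\sigma$, which is the class actually needed.

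The missing idea, to which the paper's proof is entirely devoted, is to show that the truncated integrals $\int_{C_n}\varphi_\lambda(n^{-1}ug)\,du$, over a compact exhaustion $C_n$ of $wUw^{-1}\cap U'\backslash U'$, converge to the full integral \emph{in the Fr\'echet topology of} $V_\sigma$, not merely in $H_{\overline{\sigma}}$. Over each compact $C_n$ the integral is a Gelfand--Pettis integral, so any $V_\sigma$-continuous $\ell$ passes inside; the point is then to upgrade the convergence $\int_{C_n}\to\int$ from the Hilbert norm to all the defining semi-norms, i.e.\ to show $\sigma(X)\bigl(\int_{C_n}\bigr)\to\sigma(X)\bigl(\int\bigr)$ for $X$ in the enveloping algebra. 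This requires (i) that $\sigma(X)$ commutes with the Bochner integral over any measurable subset, proved by a difference-quotient computation using the continuity of the $M$-action on $H_{\overline{\sigma}}$, and (ii) the absolute convergence of $\int\lVert\sigma(X)\varphi_\lambda(n^{-1}ug)\rVert\,du$, which is where the moderate growth of $\sigma$ enters via an estimate of the form $\lVert\sigma(X)\sigma(m)v\rVert\leqslant\lVert m\rVert^{N}q(v)$ together with Wallach's convergence argument for intertwining integrals. Neither ingredient appears in your proposal, and there is no way around them: a functional continuous on $V_\sigma$ but not on $H_{\overline{\sigma}}$ cannot be interchanged with a limit that is only known to hold in $H_{\overline{\sigma}}$.
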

\begin{proof}
	We provide the proof when $F=\mathbb{R}$. Let $U_n$ be an increasing sequence of relatively compact open subsets of the quotient $wUw^{-1}\cap U'\backslash U'$, such that their closures $C_n=\overline{U_n}$ exhaust the latter quotient. Since $C_n$ is compact and $V_{\sigma}$ is a Frechet space, the \textit{Gelfand-Pettis} integral 
		\begin{align*}
			\int_{C_n}^{\asterisk}\varphi_{\lambda}(n^{-1}ug)du \in V_{\sigma}
		\end{align*}
		exists by \cite[Theorem 3.27]{Rudin-Functional-Analysis}. By the very definition of Gelfand-Pettis integrals (see \cite[Page 77]{Rudin-Functional-Analysis}), we have
		\begin{align}\label{formula::Pettis-integral-defn}
			\ell (\int_{C_n}^{\asterisk} \varphi_{\lambda}(n^{-1}ug)du)  = \int_{C_n} \ell (\varphi_{\lambda}(n^{-1}ug)) du
		\end{align}
		for every continuous linear form $\ell$ on $V_{\sigma}$. On the other hand, the Bochner integral 
		\begin{align*}
				\int_{C_n}\varphi_{\lambda}(n^{-1}ug)du \in H_{\overline{\sigma}} 
		\end{align*} 
		is equal to $\int_{C_n}^{\asterisk} \varphi_{\lambda}(n^{-1}ug)du$ since the restriction to $V_{\sigma}$ map identifies the continuous dual of $H_{\overline{\sigma}}$ to a subspace of the continuous dual of $V_{\sigma}$. 
		
Moreover when $\lambda$ is positive enough as in the statement of the lemma, which we will assume from now on, the Bochner integral 
		\begin{align*}
				\int_{wUw^{-1} \cap U' \backslash U'}\varphi_{\lambda}(n^{-1}ug)du \in H_{\overline{\sigma}}
		\end{align*} is absolutely convergent and belongs to $V_{\sigma}$ as we already observed. 
		
		Now by our assumption, we have
		\begin{align*}
			\int_{C_n} \ell (\varphi_{\lambda}(n^{-1}ug)) du  \ra \int_{wUw^{-1} \cap U' \backslash U'} \ell (\varphi_{\lambda}(n^{-1}ug) )du.
		\end{align*}
		Thus, in view of \eqref{formula::Pettis-integral-defn}, it suffices for us to show that
		\begin{align*}
			\ell (\int_{C_n} \varphi_{\lambda}(n^{-1}ug)du)   \ra 	\ell(  \int_{wUw^{-1} \cap U' \backslash U'}  \varphi_{\lambda}(n^{-1}ug)du ).
		\end{align*}
		This would hold if we could show that 
		\begin{align*}
			\int_{C_n} \varphi_{\lambda}(n^{-1}ug)du  \ra 	\int_{wUw^{-1} \cap U' \backslash U'}  \varphi_{\lambda}(n^{-1}ug )du 
		\end{align*}
		in the Frechet topology of $V_{\sigma}$. By induction, it is enough to show that, for $X \in \mathfrak{m}_{\BC}$,
		\begin{align}\label{formula::convergence--FrechetTopo}
			\sigma(X) ( \int_{C_n} \varphi_{\lambda}(n^{-1}ug)du )  \ra \sigma(X) (  \int_{wUw^{-1} \cap U' \backslash U'}  \varphi_{\lambda}(n^{-1}ug )du  ).
		\end{align}
		This will follow from the absolute convergence of 
		\begin{align}\label{formula::abs-conv}
			 \int_{wUw^{-1} \cap U' \backslash U'} \sigma(X) (\varphi_{\lambda}(n^{-1}ug) )du  
		\end{align}
		and the following claim: for any measurable $S \subset wUw^{-1} \cap U' \backslash U'$, 
		\begin{align*}
			\sigma(X) (\int_S \varphi_{\lambda}(n^{-1}ug) du )  = \int_S \sigma(X) ( \varphi_{\lambda}(n^{-1}ug) ) du,
		\end{align*}
		where the integrals involved above are all Bochner integrals. Taking the aboslute convergence of 
		\eqref{formula::abs-conv} for granted, the computations below make sense and prove the claim:
		
		\begin{align*}
			\sigma(X) (\int_S \varphi_{\lambda}(n^{-1}ug) du ) &= \lim_{t \ra 0} \frac{1}{t} ( \sigma(e^{tX}) \int_S \varphi_{\lambda}(n^{-1}ug) du - \int_S \varphi_{\lambda}(n^{-1}ug) du )  \\
													         &= \lim_{t \ra 0} \frac{1}{t} \int_S ( \sigma(e^{tX}) \varphi_{\lambda}(n^{-1}ug) - \varphi_{\lambda}(n^{-1}ug) ) du \\
													         &= \lim_{t \ra 0} \frac{1}{t} \int_S \int_0^t \sigma(e^{sX}) \sigma(X) (\varphi_{\lambda}(n^{-1}ug)) ds du \\
													         &= \lim_{t \ra 0} \frac{1}{t}  \int_0^t \int_S \sigma(e^{sX})  \sigma(X) (\varphi_{\lambda}(n^{-1}ug)) du ds \\
													         &= \int_S \sigma(X) ( \varphi_{\lambda}(n^{-1}ug) ) du.
		\end{align*}
		Note that the second identity follows from the continuity of the action of $M$ on $H_{\overline{\sigma}}$. It remains to prove the absolute convergence of \eqref{formula::abs-conv} in order to complete the proof. 
		
		For the absolute convergence of \eqref{formula::abs-conv}, we follow the arguments in \cite[Lemma 10.1.2]{Wallach-RRG-II} where there is no action of $\sigma(X)$. Here we only indicate the necessary modifications. We can always assume $g = e$ and write
		\begin{align*}
			\varphi_{\lambda} (n^{-1} u ) = a (n^{-1} u)^{\lambda + \rho_P} \sigma (m (n^{-1} u )) \varphi ( k (n^{-1}u)),
		\end{align*}
		where $g = a(g)m(g)u(g)k(g)$ is the Iwasawa decomposition of $g$ with respect to $P$. Recall that $\sigma$ is assumed to be of moderate growth. This implies that
		\begin{align*}
			\lVert \sigma(X) (\sigma (m (n^{-1} u )) (\varphi_{\lambda}  ( k (n^{-1}u)) )) \rVert \leqslant \lVert m(n^{-1} u)\rVert^N q( \varphi_{\lambda} (k(n^{-1}u))),
		\end{align*}
		where the norm in the left-hand side is the norm on the Hilbert space and the norm in the right-hand side is the norm on the group (see \cite[Section 2.3.1]{Bernstein-Krotz}). Here $q$ is a semi-norm on $V_{\sigma}$. By definition, $\varphi_{\lambda} $ is a smooth, in particular, continuous function from $G$ to $V_{\sigma}$. As $q$ is continuous on $V_{\sigma}$, so $q (\varphi_{\lambda}(k (n^{-1}u)))$ is bounded. The rest then follows from the arguments in \cite[Lemma 10.1.2]{Wallach-RRG-II}.
		
\end{proof}

The generic invertibility of intertwining operators is crucial for the proofs of our main results. It relies on generic irreducibility of parabolically induced representations, and at the moment, we could only find a reference for this fact when $F$ is $p$-adic. Hence in the real case, we make the following assumption which should be true thanks to ongoing work of David Renard. The assumption in question is moreover satisfied when $\sigma$ is unitary and more generally uniformly bounded (see \cite[Section 10.5.1]{Wallach-RRG-II}) thanks to \cite[10.5.3]{Wallach-RRG-II}, and we only need it for $P$ maximal.

\begin{as}\label{as}(Valid for uniformly bounded $\sigma$.)
Suppose $F=\BR$, and suppose that $\sigma$ is an irreducible representation of $M$. There exists a family $(P_n)_{n\in \BN}$ of polynomial functions of the variable $\lambda \in \fra_{M,\BC}^*$ such that if $P_n(\lambda)\neq 0$ for all $n$, then 
$I_P^G(\sigma,\lambda)$ is irreducible. 
\end{as}

We recall the ingredients of the following well-known fact. 

\begin{prop}\label{prop::invert--intertwinner}
	Let $\sigma$ be a representation of $M$ of finite length. If $F$ is $p$-adic, there is $c_0 > 0$ such that, for $c > c_0$ and $\lambda \in \fra_{M,\BC}^*$ with $\Re \lambda \in \mathcal{D}^{M,w}(c)$, the intertwining operator $M(n,\sigma,\lambda)$ is an isomorphism. If $F=\BR$, under Assumption \ref{as}, the intertwining operator $M(n,\sigma,\lambda)$ is an isomorphism for $\lambda$ outside a countable union of affine hyperplanes of $ \fra_{M,\BC}^*$. 
\end{prop}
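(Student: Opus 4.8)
The plan is to reduce the finite-length case to the irreducible case and then exploit the standard composition/duality identities for intertwining operators. First I would treat the irreducible case. Pick a representative $n$ of $w$; the operator $M(n,\sigma,\lambda)\colon I_P^G(\sigma,\lambda)\to I_{P'}^G(w\sigma,w\lambda)$ is meromorphic in $\lambda$, and composing with $M(n^{-1},w\sigma,w\lambda)$ gives back an operator $I_P^G(\sigma,\lambda)\to I_P^G(\sigma,\lambda)$ which, by the standard theory (Shahidi in the $p$-adic case; Vogan--Wallach plus the relevant scalar meromorphic factor $R$ in the real case) equals a meromorphic scalar $j(\sigma,\lambda)$ times the identity. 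At a point $\lambda$ where $I_P^G(\sigma,\lambda)$ is irreducible and where both operators are holomorphic and nonzero (using the "rational map $R$" strengthening recalled in the excerpt, one may always renormalize so that the operator is holomorphic and nonzero at a prescribed point), the composition is a nonzero scalar times the identity, hence $M(n,\sigma,\lambda)$ is invertible there. In the $p$-adic case irreducibility holds for $\Re\lambda$ in a sufficiently positive cone $\mathcal{D}^{M,w}(c_0)$ — this is a standard consequence of the theory of the Bernstein center / Langlands classification, equivalently of the fact that in a sufficiently positive cone $I_P^G(\sigma[\lambda])$ is a standard module with irreducible Langlands quotient and no other constituents, so for $c$ large the whole induced representation is irreducible; in the real case this is exactly Assumption \ref{as}, which guarantees irreducibility off a countable union of hyperplanes. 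Intersecting with the (countably many) hyperplanes/divisors where the normalizing factors or the operators degenerate, we get invertibility of $M(n,\sigma,\lambda)$ on the claimed set when $\sigma$ is irreducible.

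Next I would pass to general finite length $\sigma$. Take a composition series $0=\sigma_0\subset\sigma_1\subset\cdots\subset\sigma_r=\sigma$ with irreducible quotients $\tau_i=\sigma_i/\sigma_{i-1}$. Parabolic induction is exact, so we get a filtration of $I_P^G(\sigma,\lambda)$ with subquotients $I_P^G(\tau_i,\lambda)$, and by functoriality $M(n,\sigma,\lambda)$ respects this filtration and induces on the $i$-th subquotient the operator $M(n,\tau_i,\lambda)$ (this compatibility is immediate from the integral formula \eqref{formula::defn--intertwinner} in the region of absolute convergence, hence everywhere by meromorphic continuation). Therefore, in the region $\Re\lambda\in\mathcal{D}^{M,w}(c)$ where $M(n,\sigma,\lambda)$ is given by a convergent integral (so in particular holomorphic) and where every $M(n,\tau_i,\lambda)$ is invertible, the map $M(n,\sigma,\lambda)$ is a filtered morphism that is an isomorphism on every graded piece, hence an isomorphism. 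Choosing $c_0$ to be the maximum of the finitely many constants arising from the $\tau_i$ (and, in the real case, taking the union of the finitely many countable families of hyperplanes) finishes the argument.

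The step I expect to be the main obstacle is pinning down, in a citable way, the generic irreducibility input and its compatibility with the cone $\mathcal{D}^{M,w}(c)$. In the $p$-adic case one must be careful that "irreducible for $\Re\lambda$ large in the $w$-cone" is genuinely available: the cleanest route is to note that when $\langle\Re\lambda,\alpha^\vee\rangle$ is large for the relevant positive roots, $M(n,\sigma,\lambda)$ is given by an absolutely convergent integral and one can invoke the Langlands/standard-module description — or simply cite the same references used for meromorphic continuation, where the generic bijectivity of the "long" intertwining operator between opposite parabolics is established. In the real case the obstacle is entirely packaged into Assumption \ref{as}, and the only remaining point is bookkeeping: Assumption \ref{as} is stated for irreducible $\sigma$, so the reduction to composition factors above is exactly what is needed, and one should note explicitly that the exceptional set remains a countable union of affine hyperplanes since a finite union of such is again of that form. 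A minor technical point to address is that invertibility of a holomorphic family of operators on a fixed Fréchet (resp. smooth) space at a point need not be an open condition in infinite dimensions, but here we only claim invertibility on an explicit set (a cone minus a divisor, resp. the complement of countably many hyperplanes), and on that set invertibility is checked pointwise by the scalar-composition identity, so no openness is needed.
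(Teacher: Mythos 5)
Your reduction from finite length to irreducible $\sigma$ (filtration by irreducible subquotients, invertibility on each graded piece) is essentially the paper's first step, and deducing invertibility from irreducibility of source and target plus non-vanishing of the operator is also the paper's mechanism. The gap is in your irreducible case, precisely at the step you flag as the main obstacle: the assertion that $I_P^G(\sigma,\lambda)$ is irreducible for $\Re \lambda \in \mathcal{D}^{M,w}(c_0)$ is false for general $w$ and non-maximal $M$. The cone $\mathcal{D}^{M,w}(c)$ only constrains $\langle \Re\lambda,\alpha^\vee\rangle$ for the roots $\alpha>0$ with $w\alpha<0$; all other directions of $\lambda$ are unconstrained and may lie on the reducibility locus. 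For instance, take $G=\GL_3(F)$, $M=M_0=T_0$, $\sigma=\mathbf{1}$ and $w$ the elementary symmetry attached to $\alpha_1=e_1-e_2$: then $\mathcal{D}^{M,w}(c)$ only requires $\lambda_1-\lambda_2>c$, while $\lambda=(N,\tfrac12,-\tfrac12)$ yields a reducible principal series for every $N$ (the intertwining operator attached to $w$ is nevertheless still invertible there, which is why the proposition survives but your route to it does not). Your fallback justification is also incorrect: a standard module has a unique irreducible quotient but in general has other constituents, and for an arbitrary irreducible (non-tempered) $\sigma$ the representation $I_P^G(\sigma,\lambda)$ with $\lambda$ positive need not be a standard module at all. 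Consequently the composition identity $M(n^{-1},w\sigma,w\lambda)\circ M(n,\sigma,\lambda)=j(\sigma,\lambda)\,\mathrm{Id}$ cannot be exploited at such points, since you have no control over where $j$ vanishes.

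The missing idea is the rank-one reduction. Writing $w$ as a reduced product of elementary symmetries, the operator $M(n,\sigma,\lambda)$ factors as a product of operators $M(n_i,\sigma_i,\lambda_i)$, each of which, by transitivity of induction and exactness of parabolic induction, is induced from a corank-one situation $M_i\subset L_i$ in which the relevant parameter is one-dimensional. There, generic irreducibility (\cite[Theorem 3.2]{Sauvageot-density-principle} together with \cite[Proposition VI.8.4]{Renard-book-repn} in the $p$-adic case; Assumption \ref{as} for maximal parabolics in the real case) leaves only finitely many bad values of $\Re\lambda^{L_i}$, resp.\ countably many translates $v+\fra_{L_i,\BC}^*$, and the roots $\alpha_i$ occurring in a reduced decomposition are exactly those entering the definition of $\mathcal{D}^{M,w}(c)$, so the cone condition makes every factor invertible. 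Note also that in the real case a direct appeal to Assumption \ref{as} for the pair $(M,G)$ would only exclude a countable union of hypersurfaces (zero sets of polynomials), whereas the statement --- and its later use in the proof of Theorem \ref{thm::main-1} --- requires the exceptional set to consist of affine hyperplanes of the specific form $v+\fra_{L_i,\BC}^*$; this, too, only comes out of the rank-one factorization.
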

\begin{proof}
	We start with the $p$-adic case, and then explain the modifications to be made when $F=\mathbb{R}$. We make a series of reductions. Note that if $0 \ra \sigma_1 \ra \sigma \ra \sigma_2 \ra 0$ is an exact sequence of representations of $M$, we have the commutative diagram
	\begin{equation*}
		\begin{tikzcd}
			0 \arrow[r]  & I_P^G(\sigma_1,\lambda) \arrow[r] \arrow[d] & I_P^G(\sigma,\lambda) \arrow[r] \arrow[d] & I_P^G(\sigma_2,\lambda) \arrow[d] \arrow[r] & 0  \\
		    0 \arrow[r]  & I_{P'}^G(w\sigma_1,w\lambda) \arrow[r]  & I_{P'}^G(w\sigma,w\lambda) \arrow[r]  & I_{P'}^G(w\sigma_2,w\lambda) \arrow[r] & 0
		\end{tikzcd}
	\end{equation*}
where horizontal arrows are induced by the fact that parabolic induction is a functor and vertical arrows are given by intertwining operators. Since parabolic induction is exact, the rows are exact sequences. Consequently, by the five lemma and the fact that $\sigma$ is of finite length, the proposition reduces to the case where  $\sigma$ is irreducible. 

Assume that $\sigma$ is irreducible. By decomposing $w$ into elementary symmetries, we may and do assume that $w$ is an elementary symmetry (see \cite[Theorem 2.1.1]{Shahidi-L-Function-AJM-81} or \cite[Lemma I.1.8, Proposition II.1.6]{Moeglin-Waldspurger-95-spectral}). By transitivity of induction and the exactness of the functor of parabolic induction, we may and do further assume that $M$ is a maximal Levi subgroup. 

By a result of Waldspurger \cite[Theorem 3.2]{Sauvageot-density-principle}, there exists $\lambda\in \fra_{M,\BC}^*$ such that $I_P^G(\sigma,\lambda)$ is irreducible (in fact, the result asserts that the set of irreducibility points is open and dense). Let $X(M)$ be the group of unramified character of $M$ and let $\kappa: \fra_{M,\BC}^*\rightarrow X(M)$ be the surjective map given by $\lambda\mapsto e^{\langle \lambda, H_M(\cdot) \rangle}$. It now follows from \cite[Proposition VI.8.4]{Renard-book-repn}, that there is a non-empty Zariski open subset $O$ of $X(M)$ such that $I_P^G(\sigma,\lambda)$ is irreducible for all $\lambda$ with $\kappa(\lambda) \in O$. Clearly, irreducibility of $I_P^G(\sigma,\lambda)$ depends only on the projection $\lambda^G$ of $\lambda$ to the one dimensional space $(\fra_{M,\BC}^G)^*$. Consequently, $I_P^G(\sigma,\lambda)$ is irreducible for all but finitely many values of $\kappa(\lambda^G)$. 
Therefore, for $c$ large enough, for all $\lambda \in \fra_{M,\BC}^*$ with $\Re \lambda \in \mathcal{D}^{M,w}(c)$ both $I_P^G(\sigma,\lambda)$ and $I_{P'}^G(w\sigma,w\lambda)$ are irreducible. As the intertwining operator is non-zero, for such $\lambda$ it is an isomorphism.  

When $F=\BR$, the proof is essentially the same. For the reduction to irreducible $\sigma$, the best is probably to 
use the fact that finite length really means finite length as a $\mathcal{S}(M)$-module. Now we decompose $M(n,\sigma,\lambda)$ with respect to a minimal decomposition of $w$ into elementary symmetries as the product of the intertwiners 
$M(n_i,\sigma_i,\lambda_i):I_{P_i}^G(\sigma_i)\rightarrow I_{P_{i+1}}^G(s_i(\sigma_i))$. Here, $M_i$ is a translate of $M$ by an element of $W(M)$, $\sigma_i$ and $\lambda_i$ are translates of $\sigma$ and $\lambda$ by the same element, and $n_i$ corresponds to an elementary symmetry $s_{\alpha_i}$ in $W(M_i,M_{i+1})$. Set $L_i$ to be the standard Levi subgroup generated by 
$M_i$ and the root subgroups $U_{\pm \alpha_i}$, it contains $M_i$ and $M_{i+1}$ as maximal Levi subgroups. Applying transitivity of parabolic induction and making use of Assumption \ref{as}, 
one sees that each $M(n_i,\sigma_i,\lambda_i)$ is invertible outside a countable number of translates $v_{k,i}+\fra_{L_i,\BC}^*$ of $\fra_{L_i,\BC}^*$ inside $\fra_{M_i,\BC}^*$, and the result follows. 
\end{proof}


\section{Intertwining periods}\label{sec::intt-periods}

We recall the notion of intertwining periods in the generality of symmetric spaces in \cite[Section 3.1]{Matringe-Offen-InttPeriods-PLMS}.

\subsection{A general setup}

Let $(G,\theta)$ be a symmetric pair. For a representation $\pi$ of $G$ and a character $\chi$ of $H$, denote by $\Hom_{G^{\theta}}(\pi,\chi)$ the space of $(G^{\theta},\chi)$-equivariant linear forms on the space of $\pi$. We assume that $\chi$ factors through a rational character (hence trivial on any unipotent subgroup, see \cite[Corollary 14.18]{Milne-AlgGroup}).

Consider a parabolic subgroup $P=M\ltimes U$ of $G$ with a $\theta$-stable Levi subgroup $M$. By \cite[Lemma 6.3]{Offen-ParabolicInduction-JNT} one has $P^{\theta} = M^{\theta} \ltimes U^{\theta}$. Let $\sigma$ be a representation of $M$ and $\ell \in \Hom_{M^{\theta}} (\sigma,\delta_{P^{\theta}}\delta_P^{-1/2}\chi)$. For $\varphi \in I_P^G (\sigma)$, the integral
\begin{align*}
	L(\varphi) = \int_{P^{\theta} \,\backslash G^{\theta}} \chi(g)^{-1} \ell (\varphi (g)) dg
\end{align*}
makes sense formally. If it converges it defines a linear form $L \in \Hom_{G^{\theta}} (I_P^G(\sigma),\chi)$. However, in general, the integral fails to converge. One way to circumvent this technical difficulty is to introduce a complex space of unramified twists, prove convergence in a cone and meromorphic continuation.

Since $\theta$ preserves $M$ and $T_M$, it also acts as an involution on the space $\fra_{M,\BC}^*$ and on the set of roots $R(T_M,G)$. This gives rise to the decomposition
\begin{align*}
	\fra_{M,\BC}^*  = (\fra_{M,\BC}^*)^+_{\theta}  \oplus (\fra_{M,\BC}^*)^-_{\theta}
\end{align*}
where $(\fra_{M,\BC}^*)^{\pm}_{\theta}$ is the $\pm 1$-eigenspace of $\theta$ respectively.
Note that 
\begin{align*}
	\langle \lambda, H_P(m) \rangle =0, \quad \lambda \in (\fra_{M,\BC}^*)^-_{\theta},\ m \in M^{\theta}.
\end{align*}
Hence, for $\lambda \in (\fra_{M,\BC}^*)^-_{\theta}$, the integral
\begin{align}\label{formula::defn--InttP+Twist}
	L_{\lambda} (\varphi) = \int_{P^{\theta} \backslash G^{\theta}} \chi(g)^{-1} \ell (\varphi_{\lambda} (g)) dg
\end{align}
makes sense formally and defines a linear form $L_{\lambda} \in \Hom_{G^{\theta}} (I_P^G(\sigma,\lambda),\chi)$ when it converges.

For $c > 0$, set
\begin{align}\label{formula::defn--cone}
	\mathcal{D}_{M,\theta} (c) = \{ \lambda \in (\fra_{M}^*)^-_{\theta} \mid \langle   \lambda,\alpha^{\vee} \rangle > c, \  \forall \alpha \in R(T_M,P), \theta(\alpha) <0 \}.
\end{align}
This is a non-empty open set in $(\fra_{M}^*)^-_{\theta}$. In fact, it contains the projection to $(\fra_{M,\BC}^*)^-_{\theta}$ of any sufficiently large positive multiple of an element in the positive Weyl chamber of $\fra_{M}^*$ (cf. \cite[Lemma 5.2.1, (2)]{Lapid-Rogawski-periods-Galois}). 

\subsection{Open intertwining periods}\label{sec::Blanc-Delorme}

Let $P=M\ltimes U$ be a parabolic subgroup of $G$ such that $\theta(P)$ is the non-standard parabolic subgroup of $G$ opposite to $P$. That is, $M = P \cap \theta(P)$. It follows that $M$ is $\theta$-stable and $P^{\theta} = M^{\theta}$ is reductive. Assume also that $\chi = \mathbf{1}$ is the trivial character of $H$. In this case we have the following fundamental result due to Brylinsky--Delorme  \cite{Brylinski-Delorme-H-inv-form-MeroExtension-Invention} and Carmona--Delorme \cite{Carmona-Delorme-H-inv-form-FE-JFA} when $F=\mathbb{R}$, and Blanc--Delorme \cite{Blanc-Delorme} when $F$ is $p$-adic. We mention that \cite[Théorème 3]{Carmona-Delorme-H-inv-form-FE-JFA} assumes that the representation $\sigma$ hereunder is unitary, but due to recent results one can remove this assumption in the satetment below, as we explain in its proof. 

\begin{prop}\label{prop::Blanc-Delorme}
	Under the above assumptions, let $\sigma$ be a representation of $M$ of finite length. Then\\
	\textup{(1)} \quad there exists $c > 0$ such that for all $\varphi \in I_P^G(\sigma)$, $\ell \in \Hom_{M^{\theta}}(\sigma,\mathbf{1})$ and $\lambda \in (\fra_{M,\BC}^*)^-_{\theta}$ with $\Re \lambda \in \mathcal{D}_{M,\theta}(c)$ the integral \eqref{formula::defn--InttP+Twist} is absolutely convergent; \\
	\textup{(2)} \quad the linear form $\varphi \mapsto L_{\lambda}(\varphi)$ defined by \eqref{formula::defn--InttP+Twist} admits a meromorphic continuation in $\lambda \in (\fra_{M,\BC}^*)^-_{\theta}$ that we continue to denote by $L_\lambda$;\\
	\textup{(3)} \quad suppose that $L_{\lambda}$ is regular at $\lambda$. Then the linear form $L_{\lambda}$ is non-zero if and only if $\ell$ is non-zero. Moreover, if $F$ is $p$-adic and $\ell$ is non-zero, we can choose $\varphi_0 \in I_P^G(\sigma)$ with support contained in $PH$ such that $L_{\lambda}(\varphi_0)$ converges for all $\lambda$ and is identically equal to $1$.	
\end{prop}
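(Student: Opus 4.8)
The plan is to follow the classical approach of Blanc--Delorme (resp. Brylinsky--Delorme and Carmona--Delorme in the archimedean case), reducing each assertion to the known statements in those references and explaining how the recent results on parabolic induction allow one to drop the unitarity hypothesis on $\sigma$. The key geometric input is that the open $P$-orbit in $X$ attached to this situation is precisely $PH=P\cdot e$, with $P\cap H = M\cap H = M^\theta = P^\theta$; thus $P^\theta\backslash G^\theta$ is naturally identified with an open subset of $P\backslash G$, and the integral \eqref{formula::defn--InttP+Twist} is an integral over this open orbit. The three assertions are then, respectively: absolute convergence in a cone, meromorphic continuation, and a non-vanishing and support statement.

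\textbf{Convergence and meromorphic continuation (parts (1) and (2)).} First I would treat the case where $\sigma$ is irreducible. In the $p$-adic case the absolute convergence in the cone $\mathcal{D}_{M,\theta}(c)$ for $c$ large is \cite[Th\'eor\`eme 2.8 and Th\'eor\`eme 2.16]{Blanc-Delorme} applied to the representation $\sigma[\lambda]$, once one notes that their standing unitarity assumption enters only through the need for the family $\lambda\mapsto I_P^G(\sigma,\lambda)$ to have an irreducibility point and generically invertible intertwining operators; this is supplied in general by our Proposition \ref{prop::invert--intertwinner} (via Waldspurger's result) without any unitarity. Meromorphic continuation in $\lambda\in(\fra_{M,\BC}^*)^-_\theta$ then follows from the same references, or alternatively by the Bernstein-continuation principle once convergence in a cone is known. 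In the real case, the analogous statements are \cite{Brylinski-Delorme-H-inv-form-MeroExtension-Invention} and \cite{Carmona-Delorme-H-inv-form-FE-JFA}; here \cite[Th\'eor\`eme 3]{Carmona-Delorme-H-inv-form-FE-JFA} literally assumes $\sigma$ unitary, and I would remark that the proof goes through for $\sigma$ of finite length using the Hilbert globalization $\overline\sigma$ of $\sigma$ (Section \ref{sec::reps}) together with moderate-growth estimates as in the proof of Lemma \ref{lem::intertw'+linearform}, and again invoking Proposition \ref{prop::invert--intertwinner} for generic irreducibility in place of unitarity. To pass from irreducible $\sigma$ to finite length $\sigma$ I would use the five-lemma argument exactly as in the proof of Proposition \ref{prop::invert--intertwinner}: an exact sequence $0\to\sigma_1\to\sigma\to\sigma_2\to 0$ of $M$-representations induces, via exactness of $I_P^G$ and of $\ell\mapsto L_\lambda$, compatible exact sequences, so convergence and meromorphy for the subquotients give them for $\sigma$.

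\textbf{Non-vanishing and support (part (3)).} The "only if" direction is clear since $L_\lambda$ factors through $\ell$. For the converse, assume $\ell\neq 0$ and $L_\lambda$ regular at $\lambda$. In the $p$-adic case I would argue directly with the geometry: choose $\varphi_0\in I_P^G(\sigma)$ supported on the single open orbit $PH$, so that for $g\in PH$ one has $\varphi_0(g)$ prescribed by the section condition and $\varphi_0$ vanishes outside. Since $P^\theta\backslash G^\theta$ maps homeomorphically onto $PH/P\subset P\backslash G$ and the orbit is open, the defining integral \eqref{formula::defn--InttP+Twist} for such $\varphi_0$ is an integral of a compactly supported function (in the $p$-adic case smooth sections on an open orbit have compact support modulo $P$), hence converges for all $\lambda$ and equals a nonzero constant that we may normalize to $1$; this also shows $L_\lambda$ is not identically zero in $\lambda$, and regularity then gives $L_\lambda\neq 0$ at the given point. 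In the real case the same normalization of $\varphi_0$ on the open orbit is not available with compact support, so instead I would cite the non-vanishing half of \cite[Th\'eor\`eme 2.16]{Blanc-Delorme}, resp. \cite{Carmona-Delorme-H-inv-form-FE-JFA}, for $\sigma$ irreducible, and then deduce the finite-length case: pick an irreducible sub- or quotient-representation $\sigma'$ of $\sigma$ through which $\ell$ is nonzero (using that $\Hom_{M^\theta}(-,\mathbf 1)$ is left-exact, $\ell$ restricts nontrivially to some irreducible subquotient, and the induced map on intertwining periods is compatible), apply the irreducible case there, and transport back along the exact sequence.

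\textbf{Main obstacle.} The routine part is the five-lemma bookkeeping; the genuine issue is removing unitarity from \cite{Carmona-Delorme-H-inv-form-FE-JFA} (and from the invocation of \cite{Blanc-Delorme}) in the non-vanishing assertion (3), where one must know that the meromorphic family $L_\lambda$ is not identically zero. For this one really needs the generic irreducibility of $I_P^G(\sigma,\lambda)$ and the generic invertibility of standard intertwining operators between these induced representations — which is exactly the content of Proposition \ref{prop::invert--intertwinner} and is where Assumption \ref{as} is used when $F=\BR$ — together with, in the real case, the moderate-growth and Gelfand--Pettis estimates already developed in the proof of Lemma \ref{lem::intertw'+linearform}. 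Once these are in place, the rest is a matter of citing Blanc--Delorme / Carmona--Delorme and passing to subquotients.
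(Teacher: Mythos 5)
There is a genuine gap, and it lies precisely at the point the paper's proof is designed to address. You assert that the unitarity hypothesis in Blanc--Delorme and Carmona--Delorme ``enters only through the need for the family $\lambda\mapsto I_P^G(\sigma,\lambda)$ to have an irreducibility point and generically invertible intertwining operators,'' and you propose to supply this via Proposition \ref{prop::invert--intertwinner}. That is not where unitarity is used for part (1). For a unitary $\sigma$ the quantity $\lvert\ell(\sigma(m)v)\rvert$ is controlled trivially, and the whole difficulty in extending the convergence of the integral $J(P,\delta,\nu)$ of Carmona--Delorme to non-unitary finite-length $\sigma$ is to replace this by a polynomial bound $\lvert\ell(\sigma(m)v)\rvert\leqslant C_v\lVert M^\theta m\rVert^r_{M^\theta\backslash M}$ on relative matrix coefficients. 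This is exactly what Lagier's bound (in the $p$-adic case) and the Kr\"otz--Sayag--Schlichtkrull bound (in the real case) provide, and it is the content of the paper's argument: the proof of the proposition defers the convergence to the proof of Lemma \ref{lem::estimation-abs-generalizedMC} (which rests on these bounds and feeds into Lemma \ref{lem::abs convergence--generalization-BD}), and separately invokes \cite{WallachRT} so that the Vogan--Wallach theory of intertwining operators applies to finite-length $\sigma$. Generic irreducibility plays no role in the convergence statement; your proposal as written supplies no estimate at all for $\lvert\ell(\varphi_\lambda(g))\rvert$ when $\sigma$ is not unitary.

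The reduction to irreducible $\sigma$ by the five lemma is also unsound here. That device works for intertwining operators because they are functorial in $\sigma$; but $\Hom_{M^\theta}(-,\mathbf{1})$ is only left exact, and given an exact sequence $0\to\sigma_1\to\sigma\to\sigma_2\to0$ and $\ell\in\Hom_{M^\theta}(\sigma,\mathbf{1})$ with $\ell|_{\sigma_1}\neq0$, the form $\ell$ neither factors through $\sigma_2$ nor does its restriction to $\sigma_1$ control $\lvert\ell(\varphi_\lambda(g))\rvert$ for a general $\varphi\in I_P^G(\sigma)$, so convergence and non-vanishing for the subquotients do not transfer to $\sigma$. (This is why the paper proves the growth bound directly for finite-length $\sigma$ rather than reducing to the irreducible case.) A smaller point: in the $p$-adic case Blanc--Delorme's theorems are already stated for admissible finite-length representations, so no modification is needed there; the only case requiring argument is $F=\mathbb{R}$, and for it the two missing ingredients are the KSS-type bound and the extension of \cite{Vogan-Wallach} to finite length via \cite{WallachRT}. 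Your treatment of part (3) in the $p$-adic case (a section supported on the open orbit $PH$) is correct and is the standard argument.
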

\begin{proof}
The unitarity assumption in \cite{Carmona-Delorme-H-inv-form-FE-JFA} is there for two reasons. First for 
the absolute convergence of the integral denoted $J(P,\delta,\nu)$ there, for $\nu$ in some cone. For this part when $\sigma$ (or $\delta$ in the notations of ibid.) has finite length, we refer to the proof of Lemma \ref{lem::estimation-abs-generalizedMC} below. Second, in order to apply the properties of intertwining operators in \cite{Vogan-Wallach}, but we already observed that these properties still hold for finite length $\sigma$ thanks to \cite{WallachRT}.
\end{proof}

It was observed in \cite[Lemma 8.4]{Matringe-Gamma-IntertwiningPeriod-JFA}) that the above convergence of Blanc and Delorme remains true when the linear form $\ell$ is replaced by a more general $M^\theta$-invariant function on the space of $\sigma$ with certain boundedness conditions. As explained in loc. cit. the argument in the proof of \cite[Theorem 2.16]{Blanc-Delorme} also proves this generalization. In order to formulate it precisely, we first introduce a notion of norm $\lVert \cdot \rVert_{G^{\theta} \backslash G}$ on the quotient $G^{\theta} \backslash G$. We fix a norm $\lVert \cdot \rVert$ on $G$ as in \cite[Section I.1]{Waldspurger-Plancherel} in the $p$-adic case and as in \cite[Section 0]{Brylinski-Delorme-H-inv-form-MeroExtension-Invention} in the real case. Set $\lVert G^{\theta} g \rVert_{G^{\theta}\backslash G}  =  \lVert \theta(g)^{-1} g \rVert$, $g\in G$. 

\begin{lem}\label{lem::abs convergence--generalization-BD}
	Let $\sigma$ be a representation of $M$ of finite length. Let $\eta:V_\sigma\rightarrow \mathbb{C}$ be a function on the space of $\sigma$, which when $F=\mathbb{R}$ we assue to be continuous, such that
	\begin{align*}
		\eta(\sigma(m)v) = \eta(v), \quad m \in M^{\theta}, \ v\in V_\sigma.
	\end{align*}
	Suppose that there exists $r > 0$ such that for all $v\in V_\sigma$ there exists $C_v > 0$ such that
	\begin{align*}
		\lvert \eta (\sigma(m)v) \rvert \leqslant C_v \lVert M^{\theta} m \rVert^r_{M^{\theta} \backslash M}, \quad m \in M.
	\end{align*}
	If $F=\mathbb{R}$ we morever assume that the map $v\mapsto C_v$ is continuous. 
	Then there exists $c > 0$ such that for all $\varphi \in I_P^G(\sigma)$ and $\lambda \in (\fra_{M,\BC}^*)^-_{\theta}$ with $\Re \lambda \in \mathcal{D}_{M,\theta}(c)$, the integral
	\begin{align*}
		\int_{M^{\theta} \backslash G^{\theta}} \eta( \varphi_{\lambda}(g)) dg
	\end{align*}
	is absolutely convergent.
\end{lem}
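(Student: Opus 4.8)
The plan is, following \cite[Lemma 8.4]{Matringe-Gamma-IntertwiningPeriod-JFA}, to run the argument proving the absolute convergence assertion of \cite[Theorem 2.16]{Blanc-Delorme} with the linear form $\ell$ there replaced by $\eta$. The geometry underlying that proof is insensitive to the nature of the integrated function, so one only has to revisit the single place where the linearity of the linear form $\ell$ of Proposition \ref{prop::Blanc-Delorme} is used and check that the weaker hypotheses on $\eta$ suffice there.

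I would proceed in three steps. First, the geometric unfolding, which does not involve $\eta$: in the setting of this subsection $\theta(P)=\bar{P}$ is the parabolic opposite to $P$, $M=P\cap\theta(P)$ is $\theta$-stable, $P^\theta=M^\theta$, and $g\mapsto Pg$ identifies $M^\theta\backslash G^\theta$ with the open $P$-orbit in the flag variety $P\backslash G$; intersecting that orbit with the big cell $P\bar{U}$, the integral over $M^\theta\backslash G^\theta$ is rewritten, exactly as in \cite{Blanc-Delorme} (see also \cite[Section 3.1]{Matringe-Offen-InttPeriods-PLMS}), as an integral over an open dense subset $\Omega\subseteq\bar{U}$ against an explicit smooth positive density. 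Second, the estimate of the integrand: using the Iwasawa decomposition $g=u(g)m(g)k(g)$ relative to $P$ one has $\varphi_\lambda(g)=e^{\langle\lambda+\rho_P,H_M(m(g))\rangle}\sigma(m(g))\varphi(k(g))$, and $\varphi(K)$ is a compact subset of $V_\sigma$ (finite when $F$ is $p$-adic). Since $\theta(P)=\bar{P}$ forces $\delta_P$ to be trivial on $M^\theta$, the number $|\eta(\varphi_\lambda(g))|$ is insensitive to the $M^\theta$-part of $m(g)$ and will be controlled in terms of $\|M^\theta m(g)\|_{M^\theta\backslash M}$. Combining the invariance $\eta(\sigma(m)v)=\eta(v)$, the growth bound $|\eta(\sigma(m)v)|\le C_v\|M^\theta m\|^r_{M^\theta\backslash M}$ evaluated along vectors built from $\varphi|_K$ twisted by $\lambda+\rho_P$, the compactness of $\varphi(K)$, and --- when $F=\BR$ --- the continuity of $v\mapsto C_v$, one should obtain a bound of $|\eta(\varphi_\lambda(g))|$ by $C(\varphi)$, depending only on $\varphi$ and locally uniform in $\lambda$, times a function of $g$ of exactly the shape controlled in \cite[Theorem 2.16]{Blanc-Delorme}: a positive power of $\delta_P(m(g))$ and of $|e^{\langle\lambda,H_M(m(g))\rangle}|$, against the polynomial weight $\|M^\theta m(g)\|^r_{M^\theta\backslash M}$. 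Third, one feeds this bound into the $\Omega$-integral and invokes the integrability estimate of \cite[Theorem 2.16]{Blanc-Delorme}: for $\Re\lambda\in\mathcal{D}_{M,\theta}(c)$ with $c$ chosen large in terms of $r$, the exponential factor together with the power of $\delta_P(m(\bar{u}))$ dominates the polynomial weight, the density, and the volume growth of $\Omega$, yielding absolute convergence.

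The step I expect to be the main obstacle is the integrand estimate of the second step. Since $\eta$ is not linear, one cannot simply pull the scalar $e^{\langle\lambda+\rho_P,H_M(m(g))\rangle}$ out of the argument of $\eta$ the way one does from $\ell$ in $\ell(\varphi_\lambda(g))=e^{\langle\lambda+\rho_P,H_M(m(g))\rangle}\ell(\sigma(m(g))\varphi(k(g)))$; instead one must apply the growth hypothesis to the $g$-dependent vectors $e^{\langle\lambda+\rho_P,H_M(m(g))\rangle}\varphi(k(g))$ and genuinely use that, as $g$ leaves compacta of $\Omega$, these form a relatively compact family shrinking towards $0$ (because $\Re\lambda$ lies deep in $\mathcal{D}_{M,\theta}(c)$), so that the continuity of $v\mapsto C_v$, applied on this family, transfers the decay of the scalar to the required decay of the bound. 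Once this uniform estimate is available, the remainder is the bookkeeping of the Blanc--Delorme integral, which for finite length $\sigma$ presents no new difficulty.
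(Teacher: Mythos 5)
Your overall strategy---rerunning the proof of \cite[Theorem 2.16]{Blanc-Delorme} with $\eta$ in place of the linear form---is exactly the paper's: the paper's proof simply invokes \cite[Proposition 2.14]{Blanc-Delorme} and the convergence estimate (2.33) from that proof, the only real work being the verification (via \cite[Lemma 4]{Brylinski-Delorme-H-inv-form-MeroExtension-Invention}) that these survive in the archimedean setting and that the auxiliary function $g\mapsto\epsilon(g)(v)$ on $G$ is continuous. Your first and third steps are consistent with this. The problem is your resolution of the ``main obstacle'' in the second step. Applying the growth hypothesis to the $g$-dependent vectors $v_g=e^{\langle\lambda+\rho_P,H_M(g)\rangle}\varphi(k(g))$ gives $\lvert\eta(\varphi_\lambda(g))\rvert\leqslant C_{v_g}\lVert M^{\theta}m(g)\rVert^r_{M^{\theta}\backslash M}$, and continuity of $v\mapsto C_v$ on the relatively compact family $\{v_g\}$ yields only $\sup_g C_{v_g}<\infty$; it does \emph{not} ``transfer the decay of the scalar,'' since $C_{v_g}\to C_0>0$ as $v_g\to 0$ (and even if $C_0$ could be taken to vanish, continuity provides no rate). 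The resulting bound $C\,\lVert M^{\theta}m(g)\rVert^r$ is not integrable over $M^{\theta}\backslash G^{\theta}$: the norm is bounded below by $1$ and the quotient has infinite volume in general. So as written your argument does not close.

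What actually makes the Blanc--Delorme estimate applicable is that the exponential factor must be pulled \emph{outside} $\eta$, exactly as one does for a linear form. This uses the absolute homogeneity $\eta(zv)=\lvert z\rvert\,\eta(v)$, which holds for every $\eta$ to which the lemma is applied in this paper ($\eta(f)=\int\lvert\ell(f(h))\rvert\,dh$ in the proof of Theorem \ref{thm::main-1}) and is the property the growth hypothesis is implicitly meant to work with. Once the scalar is out, one bounds $\lvert\eta(\varphi_\lambda(g))\rvert\leqslant \lvert e^{\langle\lambda+\rho_P,H_M(g)\rangle}\rvert\cdot\bigl(\sup_{k\in K}C_{\varphi(k)}\bigr)\cdot\lVert M^{\theta}m(g)\rVert^r$ and concludes by \cite[(2.33)]{Blanc-Delorme}, which asserts precisely that the exponential dominates the polynomial weight and the Jacobian for $\Re\lambda\in\mathcal{D}_{M,\theta}(c)$ with $c$ large relative to $r$. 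The continuity of $\eta$ and of $v\mapsto C_v$ enters only to guarantee $\sup_{k\in K}C_{\varphi(k)}<\infty$ and the continuity on $G$ of the associated function $\epsilon$ needed to run the archimedean version of the Blanc--Delorme argument---not to produce decay.
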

\begin{proof}
We focus on $F=\mathbb{R}$. First the arguments in 
\cite[Lemma 4]{Brylinski-Delorme-H-inv-form-MeroExtension-Invention} show that \cite[Proposition 2.14, including (iii)]{Blanc-Delorme} also remains valid in the archimedean setting ((iii) follows from the fact that at the end of the proof of \cite[Lemma 4]{Brylinski-Delorme-H-inv-form-MeroExtension-Invention}, all $\epsilon_i(g_n)$'s converge to 
$\epsilon_i(g)$, and that one of the $\epsilon_i(g)$'s is zero). 
Now denote by $\epsilon$ the function on $G$ associated to $\eta$ in the statement of \cite[Theorem 2.16]{Blanc-Delorme}, the function $g\mapsto \epsilon(g)(v)$ is continuous on $G$ for every $v\in V_{\sigma}$ thanks to the boundedness assumption on $\eta$. In turn this implies, thanks to the continuity hypothesis on $v\mapsto C_v$, that the integral in the lemma is absolutely convergent by \cite[Equality (2.33) in the proof of Theorem 2.16]{Blanc-Delorme}.
\end{proof}

\subsection{Closed intertwining periods}

Let $P = M\ltimes U$ be a $\theta$-stable parabolic subgroup of $G$ with $M$ also $\theta$-stable. By \cite[Lemma 3.1]{Gurevich-Offen-Integrability} (the arguments work verbatim when $F = \BR$) and \cite[Lemma 6.2.4]{Springer-LAG}, $P^{\theta}$ is a parabolic subgroup of $G^{\theta}$. It follows from \cite[Corollary 2.9]{Pollack-Wan-Zydor-Duke} that $P G^{\theta}$ is closed in $G$. The associated intertwining periods in this case are simpler to handle. 
\begin{lem}\label{lem::closed}
	Let $\sigma$ be a representation of $M$, $\chi$ a character of $G^\theta$ and $$\ell \in \Hom_{M^{\theta}} (\sigma,\delta_{P^{\theta}}\delta_P^{-1/2}\chi).$$ The integral \eqref{formula::defn--InttP+Twist} converges absolutely and defines an entire in $\lambda \in (\fra_{M,\BC}^*)^-_{\theta}$ family $L_\lambda$ of $G^\theta$-invariant linear forms. Moreover, for any $\lambda \in (\fra_{M,\BC}^*)^-_{\theta}$, $L_{\lambda}$ is non-zero if and only if $\ell$ is non-zero.
\end{lem}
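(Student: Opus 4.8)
The plan is to exploit the fact that, unlike the open-orbit case, the quotient $P^\theta\backslash G^\theta$ is \emph{compact}: since $P^\theta$ is a parabolic subgroup of the reductive group $G^\theta$ (by \cite[Lemma 3.1]{Gurevich-Offen-Integrability} and \cite[Lemma 6.2.4]{Springer-LAG}), the homogeneous space $P^\theta\backslash G^\theta$ is a projective variety over $F$, hence $(P^\theta\backslash G^\theta)(F)$ is compact in the $F$-topology. First I would fix an Iwasawa-type decomposition $G^\theta = P^\theta K^\theta$ for a suitable maximal compact subgroup $K^\theta$ of $G^\theta$ (in the real case using that $K=G^{\theta_c}$ and that $\theta$, $\theta_c$ commute, so $K^\theta=K\cap H$ is maximal compact in $G^\theta$; in the $p$-adic case using that $G^\theta$ is reductive and choosing a good maximal compact), so that $P^\theta\backslash G^\theta \cong (P^\theta\cap K^\theta)\backslash K^\theta$ and the invariant measure transports to a finite Haar measure on this compact quotient.

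Next I would check the integrand is a well-defined continuous function on the compact quotient. The section $\varphi_\lambda$ satisfies $\varphi_\lambda(umg)=\delta_P(m)^{1/2}e^{\langle\lambda,H_M(m)\rangle}\sigma(m)\varphi_\lambda(g)$ for $u\in U$, $m\in M$; restricting to $u\in U^\theta$, $m\in M^\theta$ (recall $P^\theta=M^\theta\ltimes U^\theta$ by \cite[Lemma 6.3]{Offen-ParabolicInduction-JNT}) and applying $\ell$, the equivariance $\ell\in\Hom_{M^\theta}(\sigma,\delta_{P^\theta}\delta_P^{-1/2}\chi)$ together with the vanishing $\langle\lambda,H_M(m)\rangle=0$ for $\lambda\in(\fra_{M,\BC}^*)^-_\theta$, $m\in M^\theta$, gives
\[
\chi(umg)^{-1}\ell(\varphi_\lambda(umg)) = \chi(g)^{-1}\ell(\varphi_\lambda(g)),
\]
so the integrand descends to $P^\theta\backslash G^\theta$. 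It is continuous (being built from the continuous $\varphi_\lambda$, $\ell$, $\chi$ — in the real case $\ell$ is continuous since $\sigma$ is admissible hence of finite length and $\Hom_{M^\theta}(\sigma,\cdot)$ consists of continuous forms; alternatively restrict to $K^\theta$ where everything is manifestly continuous). A continuous function on a compact set is bounded, so the integral converges absolutely for every $\lambda$, and holomorphy in $\lambda\in(\fra_{M,\BC}^*)^-_\theta$ follows by Morera/dominated convergence since $\lambda\mapsto\varphi_\lambda(g)$ is entire and locally uniformly bounded on the compact quotient. The $G^\theta$-invariance of $L_\lambda$ is the usual unfolding computation using right-invariance of the measure.

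For the last assertion, the "only if" direction is immediate: if $\ell=0$ the integrand vanishes identically. For "if", suppose $\ell\ne 0$; I would produce $\varphi$ with $L_\lambda(\varphi)\ne 0$. Since $PG^\theta$ is closed in $G$ (by \cite[Corollary 2.9]{Pollack-Wan-Zydor-Duke}) and $PG^\theta\cong (P\times G^\theta)/P^\theta$ with $P^\theta$ closed, one can choose (by a partition-of-unity / Frobenius-reciprocity argument, using that $P^\theta\backslash G^\theta$ is compact) a section $\varphi$ supported in $PG^\theta$ whose "restriction" to $K^\theta$ is, at a single point, an arbitrary prescribed vector times a bump function: concretely pick $v\in V_\sigma$ with $\ell(v)\ne 0$ and build $\varphi$ so that $g\mapsto\ell(\varphi_\lambda(g))$ is a nonnegative bump function on $P^\theta\backslash G^\theta$ not identically zero (taking $\chi$ into account by a unimodular twist); then $L_\lambda(\varphi)>0$. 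I expect the main obstacle to be the bookkeeping in this last step — verifying that the prescribed data at a point extends to a genuine element of $I_P^G(\sigma)$, which uses the closedness of $PG^\theta$ to ensure the support condition is consistent with the induction relations, and in the real case requires a smooth rather than merely continuous bump. Everything else is soft once compactness of $P^\theta\backslash G^\theta$ is in hand.
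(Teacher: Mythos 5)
Your treatment of convergence, holomorphy and $G^\theta$-invariance is exactly the paper's: $P^\theta$ is a parabolic subgroup of the reductive group $G^\theta$, so $P^\theta\backslash G^\theta$ is compact and the integral of the (well-defined, continuous) integrand over it converges for every $\lambda$ and is entire. No issue there.

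The non-vanishing step, however, has the support condition backwards, and as written it fails. You propose to choose $\varphi$ \emph{supported in} the closed set $PG^\theta$ with $g\mapsto\ell(\varphi_\lambda(g))$ a nonzero bump on $P^\theta\backslash G^\theta$. But in the closed-orbit situation $PG^\theta$ is in general a proper closed subset of $G$ with empty interior (e.g.\ $G=\GL_2$, $H$ the diagonal torus, $P$ the Borel, where $PG^\theta=P$), and any continuous --- a fortiori smooth or locally constant --- function supported in such a set vanishes identically: every point of $PG^\theta$ is a limit of points where the function is zero. (You may be pattern-matching on Proposition \ref{prop::Blanc-Delorme}(3), where a section supported in $PH$ is constructed --- but there $PH$ is \emph{open}.) The correct mechanism, which is what the paper uses, is dual: since the integral only sees the values of $\varphi$ on $G^\theta$, one needs the \emph{restriction} map $I_P^G(\sigma,\lambda)|_{G^\theta}\to \operatorname{ind}_{P^\theta}^{G^\theta}(\delta_P^{1/2}\sigma[\lambda])$ to be surjective; this is where closedness of $PG^\theta$ enters (extension of sections from a closed orbit, \cite[Lemma 4.3]{Offen-ParabolicInduction-JNT} in the $p$-adic case, \cite[Corollary 5.4.4]{AG08} together with \cite[Proposition 6.7]{Chen-Sun} in the real case). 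The paper then concludes abstractly: $\ell\mapsto L_\lambda$ is the composition of the Frobenius reciprocity isomorphism $\Hom_{M^\theta}(\sigma[\lambda],\delta_{P^\theta}\delta_P^{-1/2}\chi)\simeq\Hom_{G^\theta}(\operatorname{ind}_{P^\theta}^{G^\theta}(\delta_P^{1/2}\sigma[\lambda]),\chi)$ with the injection of $\Hom$-spaces induced by the surjection above, whence $L_\lambda\ne0$ iff $\ell\ne0$. Your explicit-bump idea survives once you drop the support requirement and instead invoke surjectivity of restriction to prescribe the integrand on $G^\theta$; without that correction the test vector you describe does not exist.
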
 
\begin{proof}
We recall this classical fact when $F$ is $p$-adic first. As $P^{\theta} \backslash G^{\theta}$ is compact by above, the integral \eqref{formula::defn--InttP+Twist} converges and is entire in $\lambda$. 

In fact, as in the proof of \cite[Lemma 4.3]{Offen-ParabolicInduction-JNT}, since $PG^\theta$ is a closed subset of $G$, the representation $\operatorname{ind}_{P^\theta}^{G^\theta}(\delta_P^{\frac12}\sigma[\lambda])$ of $G^\theta$ is a quotient of $I_P^G(\sigma,\lambda)|_{G^\theta}$. Here $ \operatorname{ind}$ is non-normalized induction. 
This defines an imbedding 
\[
\Hom_{G^\theta}(\operatorname{ind}_{P^\theta}^{G^\theta}(\delta_P^{\frac12}\sigma[\lambda]),\chi)\hookrightarrow \Hom_{G^\theta}(I_P^G(\sigma,\lambda),\chi). 
\]
Furthermore the map $\ell\mapsto L_\lambda$ realizes the composition of this imbedding with the isomorphism 
\[
\Hom_{M^\theta}(\sigma[\lambda],\delta_{P^{\theta}}\delta_P^{-1/2}\chi)\simeq \Hom_{G^\theta}(\operatorname{ind}_{P^\theta}^{G^\theta}(\delta_P^{\frac12}\sigma[\lambda]),\chi)
\]
given by Frobenious reciprocity (see \cite[Proposition 4.1]{Offen-ParabolicInduction-JNT}). The lemma follows when $F$ is $p$-adic. 
The proof when $F=\mathbb{R}$ is the same. We refer to \cite[Corollary 5.4.4]{AG08} together with \cite[Proposition 6.7]{Chen-Sun} for the surjectivity of $$I_P^G(\sigma,\lambda)|_{G^\theta}\rightarrow \operatorname{ind}_{P^\theta}^{G^\theta}(\delta_P^{\frac12}\sigma[\lambda])$$
 and \cite[Remark after Theorem 6.8]{Chen-Sun} for Frobenius reciprocity.
\end{proof}


\section{Maximal parabolic orbits in a symmetric space}\label{sec::maximal}

In this section we recall the graph of involutions associated to the symmetric pair and introduce the notion of maximal vertices relative to a parabolic subgroup, which is a key to this work. Throughout this section let $P = M\ltimes U$ be a parabolic subgroup.

\subsection{Maximal twisted involutions}

In his study of twisted involutions on Weyl groups, Springer characterized in \cite[Corollary 3.4]{Springer-Involution} (resp. \cite[Proposition 3.5]{Springer-Involution}) the notion of minimal (resp. maximal) twisted involutions. In \cite[Section 3.2]{Lapid-Rogawski-periods-Galois} Lapid and Rogawski further studied minimal twisted involutions relative to a parabolic subgroup (that is, relative to a subset of simple roots). Here we introduce the analogous notion of maximal twisted involutions relative to a parabolic subgroup. 

We recall some terminology from \cite[Section 3]{Lapid-Rogawski-periods-Galois}. Consider an involution $\vartheta$ on $\fra_0^*$ that stabilizes $\Delta_0$. It also naturally acts as an involution on the sets $R(T_0,G)$ and $R^+(T_0,G)$, the Weyl group $W$, the set of Levi subgroups and the set of parabolic subgroups of $G$. 

By a twisted involution we mean $\xi \in W$ such that $\vartheta( \xi) = \xi^{-1}$. The set of twisted involutions is denoted by $\mathfrak{I}_0(\vartheta)$. A twisted involution $\xi \in {}_MW_{\vartheta M} \cap \mathfrak{I}_0(\vartheta)$ is said to be \emph{$M$-admissible} if $\xi \vartheta(M) = M$. The set of $M$-admissible twisted involutions is denoted by $\mathfrak{I}_M(\vartheta)$. If $\xi \in \mathfrak{I}_M(\vartheta)$, then $\xi \vartheta$ acts as an involution on $\fra_M^*$ and $\fra_M$. Denote by $(\fra_M^*)^{\pm}_{\xi \vartheta}$ the $\pm 1$-eigenspace of $\xi \vartheta$ in $\fra_M^*$ repectively.

To motivate our next discussion, we recall the following fact about involutions in $W$. Let $w\in W$ be an involution. The following conditions are equivalent:\\
\textup{(1)} \quad $w$ has maximal length within its conjugacy class; \\
\textup{(2)} \quad for all $\alpha \in \Delta_0$, if $w \alpha > 0$, then $w\alpha = \alpha$; \\
\textup{(3)} \quad $w = w_0w_0^L$ for some Levi subgroup $L$ such that $w\alpha = \alpha$ for all $\alpha \in \Delta_0^L$.\\
Such involutions are called maximal. 

\begin{defn}\label{defn::max-twisted-involution}
	An $M$-admissible twisted involution $\xi \in \mathfrak{I}_M(\vartheta)$ is called maximal if there exists a Levi subgroup $L \supset M$ such that $\xi = w_{\vartheta L}^G$ and $\xi \vartheta \alpha = \alpha $ for all $\alpha \in \Delta_M^L$. In this case, $L$ is uniquely determined by $\xi$ and is denoted by $L_{\xi,\vartheta}$. We denote by $\Pi_M(\vartheta)$ the set of maximal twisted involutions in $\mathfrak{I}_M(\vartheta)$.
\end{defn}

From the definition of maximality we have the following result.

\begin{prop}\label{prop::property-maximal}
	For $\xi \in \Pi_M(\vartheta)$ and $L = L_{\xi,\vartheta}$ we have $\xi \in \mathfrak{I}_L(\vartheta)$ (in particular, $L$ is $\xi \vartheta$-stable). Moreover\\
	\textup{(1)}\quad $(\fra^*_M)^-_{\xi \vartheta}  = (\fra^*_L)^-_{\xi \vartheta}$, \\
	\textup{(2)}\quad $(\fra^*_M)^+_{\xi \vartheta}  = (\fra_M^L)^*  \oplus (\fra^*_L)^+_{\xi \vartheta}$,\\
	\textup{(3)}\quad $R(T_M,G) \cap (\fra^*_M)^+_{\xi \vartheta} = R(T_M,L)$.
\end{prop}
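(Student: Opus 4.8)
The plan is to unwind the definition of maximality. By Definition \ref{defn::max-twisted-involution}, $\xi = w_{\vartheta L}^G$ and $\xi\vartheta\alpha = \alpha$ for all $\alpha \in \Delta_M^L$; the first task is to show that this forces $\xi\vartheta$ to stabilize $L$, equivalently that $\xi \in \mathfrak{I}_L(\vartheta)$. First I would observe that the condition $\xi\vartheta\alpha = \alpha$ on all of $\Delta_M^L$ means $\xi\vartheta$ fixes the subspace $(\fra_M^L)^*$ pointwise, hence (by duality and the decomposition $\fra_M = \fra_L \oplus \fra_M^L$) preserves $\fra_L$ and acts trivially on $\fra_M^L$. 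Since $L$ is the standard Levi determined by the set of simple roots $\Delta_0^L$, and $\Delta_M^L$ is the image of $\Delta_0^L\setminus\Delta_0^M$ under restriction to $T_M$, combining this with the fact that $\xi = w_{\vartheta L}^G$ conjugates $\vartheta L$ to a standard Levi (which, by the fixed-point condition on $\Delta_M^L$, must be $L$ itself) gives $\xi\vartheta(L) = L$. One then needs to check $\xi \in {}_LW_{\vartheta L}$, i.e. minimality of $\xi$ in its double coset $W^L\xi W^{\vartheta L}$; this should follow from $\xi = w_{\vartheta L}^G$ being of maximal $\ell_{\vartheta L}$-length in $W^G(\vartheta L)$ together with the characterization (2)$\Leftrightarrow$(3) of maximal involutions recalled just before the definition — the maximal-length element among those conjugating $\vartheta L$ to a standard Levi is automatically a minimal double-coset representative.

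Once $L$ is $\xi\vartheta$-stable, parts (1)–(3) are essentially linear algebra plus a statement about root systems. For (1): the inclusion $(\fra_L^*)^-_{\xi\vartheta} \subseteq (\fra_M^*)^-_{\xi\vartheta}$ is clear since $\fra_L^* \subseteq \fra_M^*$ and both are $\xi\vartheta$-stable; conversely, since $\xi\vartheta$ acts trivially on $(\fra_M^L)^*$, the $(-1)$-eigenspace of $\xi\vartheta$ in $\fra_M^* = (\fra_M^L)^* \oplus \fra_L^*$ must lie entirely in $\fra_L^*$. Part (2) is the same decomposition read off on the $(+1)$-eigenspace: $(\fra_M^*)^+_{\xi\vartheta} = (\fra_M^L)^* \oplus (\fra_L^*)^+_{\xi\vartheta}$, using that $(\fra_M^L)^*$ is contained in the $(+1)$-eigenspace. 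Part (3) requires showing $R(T_M,G)\cap (\fra_M^*)^+_{\xi\vartheta} = R(T_M,L)$: the inclusion $\supseteq$ holds because $R(T_M,L)$ spans $(\fra_M^L)^*$ which is fixed by $\xi\vartheta$; for $\subseteq$, if a root $\alpha\in R(T_M,G)$ satisfies $\xi\vartheta\alpha = \alpha$, I would argue that $\alpha$ lies in $(\fra_M^L)^*$ — this is where I would invoke that $\xi\vartheta$ acts on $\fra_L^*$ without nonzero fixed vectors coming from roots outside $L$, which one traces back to $\xi = w_{\vartheta L}^G$ being a "$w_0$-type" element: on $\fra_L^*$ it should act as $-w_0^{L}$-conjugate of $\vartheta$, mapping positive roots of $T_M$ outside $L$ to negative ones, so no such root can be fixed.

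The main obstacle I anticipate is the last point in part (3) — controlling the action of $\xi\vartheta$ on roots of $T_M$ that are \emph{not} in $L$ and showing none is fixed. This needs a genuine input about the structure of $w_{\vartheta L}^G$ as the maximal-length element of $W^G(\vartheta L)$, presumably via the decomposition $w_{\vartheta L}^G = w_0^G (w_0^{\vartheta L})^{-1}$ (analogous to condition (3) for ordinary maximal involutions), so that $\xi\vartheta$ sends $\Delta_0 \setminus \Delta_0^{\vartheta L}$-roots to negatives; translating this to restrictions to $T_M$ and combining with $\vartheta$-equivariance will give that $\xi\vartheta$ strictly reverses the sign of any root with nontrivial $\fra_L^*$-component. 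Parts (1), (2) and the easy inclusion in (3), by contrast, are routine. I would also double-check the uniqueness claim for $L$ in the definition is consistent — but that is asserted in Definition \ref{defn::max-twisted-involution} and I may take it as given.
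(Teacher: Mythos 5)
Your plan is correct and follows essentially the same route as the paper: maximality forces $\xi\vartheta$ to fix $(\fra_M^L)^*$ pointwise and hence to stabilize $L$, after which (1)--(2) are linear algebra in the decomposition $\fra_M^*=(\fra_M^L)^*\oplus\fra_L^*$ and (3) reduces to the fact that $\xi=w_{\vartheta L}^G$ sends every root of $T_{\vartheta L}$ to a negative root, so no root of $T_M$ outside $L$ can be $\xi\vartheta$-fixed. The only divergence is cosmetic: the paper verifies $\xi\in{}_LW_{\vartheta L}$ by checking $\xi^{-1}\alpha>0$ directly for $\alpha\in\Delta_0^L$ (using $\xi\in{}_MW_{\vartheta M}$ and $\vartheta(\xi)=\xi^{-1}$), whereas you invoke the general property of the longest element $w_0^Gw_0^{\vartheta L}$ --- both work.
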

\begin{proof}
Since $\xi$ is $M$-admissible, $\xi\vartheta$-stabilizes $M$ and therefore also $(\fra_0^M)^*$. From the definition of maximality it follows that $\xi\vartheta$ acts on $(\fra_M^L)^*$ as the identity. Consequently, $\xi\vartheta$ stabilizes $(\fra_0^L)^*$ and therefore also $L$. Let $\alpha\in \Delta_0^L$. If $\alpha\in \Delta_0^M$ then $\xi^{-1}\alpha>0$ since $\xi\in {}_MW_{\vartheta M}$. Otherwise, $\xi^{-1}(\alpha)_{\vartheta M}=\xi^{-1}(\alpha_M)=\vartheta(\alpha_M)\in R^+(T_{\vartheta M},G)$ and therefore $\xi^{-1}\alpha>0$. Since furthermore $\xi=w_{\vartheta L}^G$ it follows that $\xi\in {}_LW_{\vartheta L}$. This shows that $\xi\in \mathfrak{I}_L(\vartheta)$. The rest of the proposition is now a simple consequence of the definitions.
\end{proof}
Let $\mathfrak{G}=\mathfrak{G}_{\vartheta}$ be the weighted directed graph defined in \cite[Section 3.3]{Lapid-Rogawski-periods-Galois} associated to the involution $\vartheta$. In what follows we freely use the notion of weight introduced by Lapid and Rogawski for vertices on the graph. In analogy with Proposition 3.3.1 of ibid. we have
\begin{prop}\label{prop::max-t-involution-characterization}
	Let $\xi \in \mathfrak{I}_M(\vartheta)$. The following conditions are equivalent:\\
	\textup{(1)}\quad $\xi \in \Pi_M(\vartheta)$;\\
	\textup{(2)}\quad for all $\alpha \in \Delta_M$, if $\xi \vartheta \alpha > 0$, then $\xi \vartheta \alpha = \alpha$. \\
	\textup{(3)}\quad the vertex $(M,\xi)$ has locally maximal weight in $\mathfrak{G}$.
\end{prop}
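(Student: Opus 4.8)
The plan is to establish the chain of equivalences $(1)\Leftrightarrow(2)\Leftrightarrow(3)$, modelling the argument on the analogous characterization of \emph{minimal} twisted involutions in \cite[Proposition 3.3.1]{Lapid-Rogawski-periods-Galois} and on the recalled classical fact about maximal involutions in $W$. First I would prove $(1)\Rightarrow(2)$. So suppose $\xi\in\Pi_M(\vartheta)$ and let $L=L_{\xi,\vartheta}$, so that $\xi=w_{\vartheta L}^G$ and $\xi\vartheta$ acts trivially on $(\fra_M^L)^*$. Take $\alpha\in\Delta_M$ with $\xi\vartheta\alpha>0$; I must show $\xi\vartheta\alpha=\alpha$. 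If $\alpha\in\Delta_M^L$ then by maximality $\xi\vartheta\alpha=\alpha$ and we are done, so assume $\alpha\notin\Delta_M^L$. Here I would exploit that $\xi=w_{\vartheta L}^G$ is the longest element of $W^G(\vartheta L)$ and reduce to a statement about the longest element $w_0^L$ combined with $w_0$, paralleling condition (3) in the recalled classical fact: writing things on $\fra_M^*$, the element $\xi\vartheta$ should send $\Delta_M\setminus\Delta_M^L$ into negative roots, contradicting $\xi\vartheta\alpha>0$. This is essentially the ``only roots fixed are those of $L$'' phenomenon, and Proposition \ref{prop::property-maximal}(3) — which says $R(T_M,G)\cap(\fra_M^*)^+_{\xi\vartheta}=R(T_M,L)$ — is exactly the tool that makes this precise, since $\xi\vartheta\alpha>0$ with $\alpha$ simple forces $\alpha$ into the $+1$-eigenspace region in a way incompatible with $\alpha\notin R(T_M,L)$ unless one argues more carefully with the length/positivity of $w_{\vartheta L}^G$.

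Next I would prove $(2)\Rightarrow(1)$. Assume (2) holds. Define $L$ to be the standard Levi subgroup generated by $M$ together with the simple roots $\alpha\in\Delta_M$ for which $\xi\vartheta\alpha=\alpha$ (equivalently, for which $\xi\vartheta\alpha>0$). Then $\xi\vartheta$ fixes $\Delta_M^L$ pointwise, and by construction $\xi\vartheta$ sends every $\alpha\in\Delta_M\setminus\Delta_M^L$ to a negative root. One then checks, as in the classical characterization of maximal involutions, that these two properties force $\xi=w_{\vartheta L}^G$: indeed $\xi\vartheta$ acting as identity on $\Delta_M^L$ gives that $\xi$ stabilizes $L$ (so $\xi\in\mathfrak{I}_L(\vartheta)$ by the argument in the proof of Proposition \ref{prop::property-maximal}), and acting by sign-reversal on the complementary simple roots pins down $\xi$ as the longest element in $W^G(\vartheta L)$, i.e. $\xi=w_{\vartheta L}^G$. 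Together with $\xi\vartheta\alpha=\alpha$ on $\Delta_M^L$ this is precisely Definition \ref{defn::max-twisted-involution}, so $\xi\in\Pi_M(\vartheta)$.

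Finally, for $(2)\Leftrightarrow(3)$ I would invoke the combinatorics of the Lapid--Rogawski graph $\mathfrak{G}_\vartheta$. Having locally maximal weight at the vertex $(M,\xi)$ means, by the definition of the weight and the edges of $\mathfrak{G}$, that no elementary move along a simple root $\alpha\in\Delta_M$ strictly increases the weight; unwinding the definition of the edge attached to $\alpha$ (which distinguishes the cases $\xi\vartheta\alpha<0$, $\xi\vartheta\alpha=\alpha$, and $\xi\vartheta\alpha>0$ with $\xi\vartheta\alpha\ne\alpha$), local maximality is equivalent to saying that whenever $\xi\vartheta\alpha>0$ one must actually have $\xi\vartheta\alpha=\alpha$ — which is condition (2). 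This is the direct analogue of the equivalence (2)$\Leftrightarrow$(3) in \cite[Proposition 3.3.1]{Lapid-Rogawski-periods-Galois}, with ``locally minimal'' replaced by ``locally maximal'', and the verification is a matter of matching the sign conventions in the definition of the weighted edges.

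The main obstacle I anticipate is the direction $(1)\Rightarrow(2)$ in the case $\alpha\notin\Delta_M^L$: one needs to rule out $\xi\vartheta\alpha>0$ for such $\alpha$, and while the heuristic is clear (the longest element $w_{\vartheta L}^G$ should reverse the sign of every simple root not coming from $L$), making this rigorous requires carefully combining the minimal-length property defining $\xi\in{}_MW_{\vartheta M}$, the fact that $\xi=w_{\vartheta L}^G$ is longest in $W^G(\vartheta L)$, and the eigenspace decomposition of Proposition \ref{prop::property-maximal}. The cleanest route is probably to pass to $L$ via Proposition \ref{prop::property-maximal}, reduce to the classical statement about maximal involutions in $W^L$ (using that $\xi\vartheta$ restricted to $\fra_M^L$ is trivial and is ``longest-like'' on the complement), and then transfer back; I would want to double-check that no subtlety arises from $\vartheta$ permuting $L$ with a conjugate Levi rather than stabilizing it, though Proposition \ref{prop::property-maximal} already guarantees $L$ is $\xi\vartheta$-stable, which should suffice.
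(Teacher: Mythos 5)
Your proposal follows essentially the same route as the paper: $(1)\Leftrightarrow(2)$ via the defining property of $w_{\vartheta L}^G$ (with $L$ either given by maximality or constructed from the simple roots fixed by $\xi\vartheta$), and $(2)\Leftrightarrow(3)$ via \cite[Lemma 3.2.1]{Lapid-Rogawski-periods-Galois}. The step you flag as the main obstacle is dispatched in the paper in one line: for $\alpha\in\Delta_M\setminus\Delta_M^L$ one has $\vartheta\alpha\in\Delta_{\vartheta M}\setminus\Delta_{\vartheta M}^{\vartheta L}$, so $\xi\vartheta\alpha=w_{\vartheta L}^G(\vartheta\alpha)<0$, which is exactly the sign-reversal you anticipated.
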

\begin{proof}
	Suppose that $\xi \in \Pi_M(\vartheta)$ and let $L = L_{\xi,\vartheta}$. For $\alpha \in \Delta_M$, if $\xi \vartheta \alpha >0$, then we necessarily have $\alpha \in \Delta_M^L$ because $\xi = w_{\vartheta L}^G$ and $\vartheta\alpha\in \Delta_{\vartheta M}$. Hence $\xi \vartheta \alpha = \alpha$ by definition. This shows that $(1)$ implies $(2)$.
	
	Conversely, suppose that $\xi \vartheta \alpha = \alpha$ whenever $\alpha \in \Delta_M$ and $\xi \vartheta \alpha > 0$. Let $L$ be the Levi subgroup containing $M$ such that $\Delta_M^L = \{\alpha \in \Delta_M \mid \xi \vartheta \alpha = \alpha \}$. By an argument similar to the proof of Proposition \ref{prop::property-maximal} we deduce that $\xi \in \mathfrak{I}_L(\vartheta)$. Futhermore, for all $\beta \in \Delta_{\vartheta L}$ we necessarily have $\xi \beta  < 0$ and consequently $\xi=w_{\vartheta L}^G$. Indeed, assume on the contrary that $\beta \in \Delta_{\vartheta L}$ is such that $\xi\beta>0$ and let $\alpha\in \Delta_{\vartheta M}$ be such that $\beta=\alpha_{\vartheta L}$. Then $\vartheta\alpha\in \Delta_M$ is such that $\xi\vartheta(\vartheta\alpha)=\xi\alpha>0$ and therefore, by definition, $\vartheta\alpha\in \Delta_M^L$ and therefore $\beta=0$. This is a contradiction. We conclude that $(2)$ implies $(1)$. 
	
	The equivalence of $(2)$ and $(3)$ follows immediately from \cite[Lemma 3.2.1]{Lapid-Rogawski-periods-Galois}.
\end{proof}

\subsection{Admissible orbits}

Following \cite[Section 3]{Offen-ParabolicInduction-JNT} we fiber the $P$-orbits in $X$ over certain twisted involutions.  Recall that $\tau \in W$ and an involution $\theta'$ on $\fra_0^*$ which is independent of the choice of $\tilde\tau\in \tau$ were defined in Section \ref{sec::inlolution-theta'}.  The fibration map 
\[
\iota_M: P\backslash X \rightarrow ( {}_MW_{\theta'(M)} \cap \mathfrak{I}_0(\theta'))\tau^{-1}
\]
is characterized by the identity
\begin{align*}
	P x \theta(P) = P\iota_M(P \cdot x) \theta(P).
\end{align*}
Recall that $x \in X$ or its $P$-orbit $P \cdot x$ is called $M$-admissible if $\iota_M(P \cdot x) \tau$ is $M$-admissible, that is, $M = w \theta(M) w^{-1}$ with $w = \iota_M(P \cdot x)$. By \cite[Lemma 6.1]{Offen-ParabolicInduction-JNT}, $x \in X$ is $M$-admissible if and only if $x \in UN_{G,\theta}(M) \theta(U)$, where 
\[
N_{G,\theta}(M) = \{ g \in G \mid g \theta(M) g^{-1} = M\}.
\]
\begin{defn}
	An $M$-admissible $P$-orbit $\CO$ in $X$ is called maximal if $\iota_M(\CO) \tau\in \Pi_M(\theta')$.
\end{defn}
By \cite[Lemma 3.2]{Offen-ParabolicInduction-JNT}, every $M$-admissible $P$-orbit $\CO$ has a non-empty intersection with $N_{G,\theta}(M)$ . Moreover, $\CO \cap N_{G,\theta}(M) = \CO \cap M \iota_M(\CO)$ is a single $M$-orbit in $X$.

\subsection{The graph of involutions $\mathfrak{G}$}\label{sec::graph-involution}

Set
\begin{align*}
	X[M] = \{ x \in X \mid \theta_x(M) = M \} = X \cap N_{G,\theta}(M).
\end{align*}
Note that a $P$-orbit in $X$ is $M$-admissible if and only if it contains an element of $X[M]$.

Following \cite{Offen-ParabolicInduction-JNT}, we define a directed, labeled graph $\mathfrak{G}$ as follows. The vertices of $\mathfrak{G}$ are the pairs $(M,x)$, where $M$ is a standard Levi subgroup of $G$ and $x \in X[M]$. The edges of $\mathfrak{G}$ are given by
\begin{align*}
		(M,x) \stackrel{n}{\searrow} (M_1,x_1) 
\end{align*}
if there is $\alpha \in \Delta_P$ with $- \alpha \ne \theta_x(\alpha) < 0$ such that $n \in s_{\alpha} M$ where $s_{\alpha} \in W(M)$ is the elementary symmetry associated to $\alpha$, $M_1 = nMn^{-1}$ and $x_1 = n \cdot x$. Note that $(M_1)_{x_1} = n M_x n^{-1}$.

Let $(M,x)$ be a vertex in $\mathfrak{G}$ and $w = \iota_M(P \cdot x)$. Since $\theta_x$ stabilizes $M$, it also acts on the space $\fra_M^*$ and on the set of roots $R(T_M,G)$. 
This action of $\theta_x$ coincides with the action of $w\theta$ since, in fact, $x\in Mw=w\theta(M)$.
\begin{rem}
The graph $\mathfrak{G}$ refines the Lapid-Rogawski graph $\mathfrak{G}_{\theta'}$ associated to the involution $\theta'$ on $\fra_0^*$ in \cite[Section 3.3]{Lapid-Rogawski-periods-Galois}. More precisely, in the above notation $(M,x) \stackrel{n}{\searrow} (M_1,x_1) $ is a vertex in $\mathfrak{G}$ if and only if
$(M,\iota_M(x)\tau) \stackrel{\alpha}{\rightarrow} (M_1,\iota_{M_1}(x_1)\tau) $ is a vertex in $\mathfrak{G}_{\theta'}$
\end{rem}


\subsection{Maximal vertices}

\begin{defn}
	A vertex $(M,x)$ in the graph $\mathfrak{G}$ is called maximal if $\iota_M(P \cdot x) \tau \in \Pi_M(\theta')$. For a maximal vertex $(M,x)$ let $L_{M,x}=L_{\xi,\theta'}$ with $\xi=\iota_M(P \cdot x)\tau$ be the associated Levi subgroup as in Definition \ref{defn::max-twisted-involution}. 
\end{defn}
It has been shown in the proof of Proposition \ref{prop::max-t-involution-characterization} that $L_{M,x}$ can be characterized as the unique Levi subgroup containing $M$ such that $\Delta_M^L = \{ \alpha \in \Delta_M \mid \theta_x(\alpha) = \alpha \}$.
\begin{lem}\label{lem::max vertice}
	Let $(M,x)$ be a maximal vertex in $\mathfrak{G}$. Let $Q=L\ltimes V$ be the parabolic subgroup with Levi $L = L_{M,x}$ and denote by $Q^-$ the non-standard parabolic subgroup of $G$ opposit to $Q$, so that $Q\cap Q^-=L$. One has 
	\begin{align*}
		\theta_x(L) = L ,\quad \theta_x(L \cap P) = L \cap P.
	\end{align*}
	and
	\begin{align*}
		 \theta_x(Q) = Q^-.
	\end{align*}
\end{lem}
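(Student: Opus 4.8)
The strategy is to work entirely on the level of roots and then transfer to the level of groups. Recall that $(M,x)$ being a maximal vertex means $\xi := \iota_M(P\cdot x)\tau \in \Pi_M(\theta')$, and that $\theta_x$ acts on $\fra_M^*$ and on $R(T_M,G)$ in the same way as $w\theta$ where $w=\iota_M(P\cdot x)$; moreover, as recalled after Proposition \ref{prop::max-t-involution-characterization}, the Levi $L = L_{M,x}$ is characterized as the unique Levi containing $M$ with $\Delta_M^L = \{\alpha\in\Delta_M \mid \theta_x(\alpha)=\alpha\}$. First I would invoke Proposition \ref{prop::property-maximal}: since $\xi\in\Pi_M(\vartheta)$ with $\vartheta=\theta'$ and $L=L_{\xi,\theta'}$, we get $\xi\in\mathfrak{I}_L(\theta')$, i.e. $L$ is $\xi\theta'$-stable, and $R(T_M,G)\cap(\fra_M^*)^+_{\xi\theta'} = R(T_M,L)$. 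Translating $\xi\theta'$ back to $\theta_x$ (which is legitimate because the action of $\theta_x$ on $\fra_M^*$ matches that of $w\theta$, equivalently that of $\xi\theta'$ after the bookkeeping of Section \ref{sec::inlolution-theta'}), this says exactly that $\theta_x$ preserves the set $R(T_M,L)$ of roots appearing in $\mathfrak{l}$, and fixes pointwise the subspace $(\fra_M^L)^*$.

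Next I would upgrade this to the statement $\theta_x(L)=L$ at the group level. Since $\theta_x$ is an involution on $G$ stabilizing $M$ (by hypothesis on the vertex), it normalizes $T_M$, and the fact that it preserves $R(T_M,L)$ means it preserves the subgroup generated by $M$ together with the root subgroups $U_{\pm\alpha}$, $\alpha\in R(T_M,L)$, which is precisely $L$. (One can phrase this via the Lie algebra $\mathfrak{l} = \mathfrak{m}\oplus\bigoplus_{\alpha\in R(T_M,L)}\mathfrak{g}_\alpha$, which is $\theta_x$-stable, hence $\theta_x(L)=L$ as $L$ is connected and generated by the corresponding one-parameter subgroups together with $M$.) For $\theta_x(L\cap P)=L\cap P$: the parabolic $L\cap P$ of $L$ has root system $R(T_M,P\cap L) = \{\alpha\in R(T_M,L)\mid \alpha>0\}$; these are the $\alpha\in R(T_M,L)$ whose restriction lies in the cone spanned by $\Delta_M^L$. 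By the characterization of $L$, $\theta_x$ fixes each element of $\Delta_M^L$, hence preserves this cone, hence preserves positivity within $R(T_M,L)$, so it stabilizes $R(T_M,P\cap L)$ and therefore $L\cap P$.

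For the last assertion $\theta_x(Q)=Q^-$ I would argue on the roots of $T_L$ (equivalently the roots of $T_M$ lying outside $(\fra_M^L)^*$). The parabolic $Q=L\ltimes V$ has $R(T_M,V) = \{\alpha\in R(T_M,G) : \alpha_L > 0\}$, i.e. those $\alpha$ with $\alpha\notin R(T_M,L)$ and $\alpha>0$; and $Q^-=L\ltimes V^-$ with $R(T_M,V^-) = -R(T_M,V)$. I claim $\theta_x$ sends $R(T_M,V)$ bijectively onto $R(T_M,V^-)$. Indeed, for $\alpha\in R(T_M,G)\setminus R(T_M,L)$, Proposition \ref{prop::property-maximal}(3) (in its $\theta_x$ form) says $\alpha\notin(\fra_M^*)^+_{\theta_x}$, so $\theta_x\alpha\neq\alpha$; combining with the condition $-\alpha\neq\theta_x(\alpha)$ that is built into the edges of $\mathfrak{G}$ — or rather, using Proposition \ref{prop::max-t-involution-characterization}(2) that $\xi\theta'\alpha>0\Rightarrow\xi\theta'\alpha=\alpha$ for simple $\alpha$, bootstrapped to all of $R(T_M,V)$ — one sees that $\alpha>0$ and $\alpha\notin R(T_M,L)$ forces $\theta_x(\alpha)<0$, and symmetrically. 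Since also $\theta_x$ fixes $(\fra_M^L)^*$ pointwise, $\theta_x(\alpha)$ again lies outside $R(T_M,L)$, hence $\theta_x(\alpha)\in R(T_M,V^-)$. This gives $\theta_x(\mathfrak{v})=\mathfrak{v}^-$, and with $\theta_x(L)=L$ already established we conclude $\theta_x(Q)=\theta_x(L)\theta_x(V) = LV^- = Q^-$.

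The main obstacle I anticipate is the bookkeeping in the final step: carefully checking that for every $\alpha\in R(T_M,V)$ one has $\theta_x(\alpha)<0$, not merely $\theta_x(\alpha)\neq\alpha$. The cleanest route is to reduce to simple roots via Proposition \ref{prop::max-t-involution-characterization}(2) and then propagate: any positive root outside $\Delta_M^L$ involves, in its expansion over $\Delta_M$, at least one simple root $\beta\in\Delta_M\setminus\Delta_M^L$ with $\theta_x\beta<0$, and one must rule out cancellations — this is where the structure of $\xi=w_{\theta'L}^G$ (so that $\xi\theta'$ sends every $\beta\in\Delta_{\theta'M}\setminus\Delta_{\theta'M}^{?}$ to a negative root) does the work, exactly paralleling the proof of Proposition \ref{prop::max-t-involution-characterization}. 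Everything else is a routine translation between root data and subgroups.
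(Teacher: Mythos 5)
Your argument is correct and follows essentially the same route as the paper's proof, which consists of two observations drawn from Proposition \ref{prop::property-maximal} and the definition of maximality: $\theta_x$ stabilizes $L$, acts trivially on $\Delta_M^L$, and sends every element of $\Delta_L$ to a negative root; the lemma is then read off at the level of root subgroups exactly as you do. Your treatment of $\theta_x(L)=L$ and $\theta_x(L\cap P)=L\cap P$ is complete.

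The one step you leave open is the last one: showing $\theta_x(\alpha)<0$ (not merely $\theta_x(\alpha)\neq\alpha$) for every $\alpha\in R(T_M,V)$, where you propose to expand over $\Delta_M$ and ``rule out cancellations.'' The cleanest closure avoids $\Delta_M$ altogether and works in the root system of $T_L$: since $\xi=w_{\vartheta L}^G$, one has $\xi\beta<0$ for every $\beta\in\Delta_{\vartheta L}$ (this is exactly how $w_{\vartheta L}^G$ is characterized in the proof of Proposition \ref{prop::max-t-involution-characterization}), and since $\vartheta(\Delta_L)=\Delta_{\vartheta L}$ it follows that $\theta_x=\xi\vartheta$ sends every element of $\Delta_L$ to a negative root. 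As $\theta_x$ stabilizes $\fra_L^*$, each such image is a nonpositive combination of $\Delta_L$; hence for $\alpha\in R(T_L,V)=R^+(T_L,G)$, which is a nonnegative combination of $\Delta_L$, the image $\theta_x\alpha$ is a nonpositive combination of $\Delta_L$ and, being a root of $T_L$, is negative. This gives $\theta_x(R(T_L,V))=R(T_L,V^-)$ with no cancellation issue, because one never mixes roots fixed by $\theta_x$ with roots sent to negative ones. If you insist on your $\Delta_M$-version, it can also be repaired: for $\alpha\in R(T_M,V)$, every coefficient of $\theta_x\alpha$ along $\Delta_M\setminus\Delta_M^L$ is $\leqslant 0$ (the terms coming from $\Delta_M^L$ contribute nothing there), and they cannot all vanish since $\theta_x$ preserves $R(T_M,L)$ and $\alpha\notin R(T_M,L)$; a root with a strictly negative coefficient is a negative root.
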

\begin{proof}
	It follows from Proposition \ref{prop::property-maximal} that $\theta_x$ stabilizes $L$ and from the definition of maximality that it acts trivially on $\Delta_M^L$ and maps every element of $\Delta_L$ to a negative root. The lemma readily follows.
\end{proof}

\begin{prop}\label{prop::maximal vertice-path}
	Let $(M,x)$ be a vertex in the graph $\mathfrak{G}$. There exists a path
	\begin{align*}
		(M_1,x_1) \stackrel{n_1}{\searrow} (M_2,x_2) \stackrel{n_2}{\searrow} \cdots \stackrel{n_k}{\searrow} (M,x)
	\end{align*}
	in $\mathfrak{G}$ such that $(M_1,x_1)$ is a maximal vertex.
\end{prop}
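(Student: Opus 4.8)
The statement asks for a path in $\mathfrak{G}$ ending at $(M,x)$ whose initial vertex is maximal. The natural strategy is to argue by reverse induction: starting from $(M,x)$, if it is not already maximal, I would produce an incoming edge $(M',x') \stackrel{n}{\searrow} (M,x)$ and then iterate. The key numerical quantity controlling termination is the weight of a vertex in the Lapid--Rogawski graph $\mathfrak{G}_{\theta'}$; each edge of $\mathfrak{G}$ projects (by the Remark at the end of Section~\ref{sec::graph-involution}) to an edge of $\mathfrak{G}_{\theta'}$, and edges in that graph strictly increase weight in the direction opposite to the arrows, so a path climbing up the arrows must reach a locally maximal vertex after finitely many steps. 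By Proposition~\ref{prop::max-t-involution-characterization}, local maximality of the weight of $(M,\iota_M(P\cdot x)\tau)$ is exactly the condition $\iota_M(P\cdot x)\tau \in \Pi_M(\theta')$, i.e. that $(M,x)$ is a maximal vertex of $\mathfrak{G}$.

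First I would show: if $(M,x)$ is not maximal, then there is an incoming edge. By Proposition~\ref{prop::max-t-involution-characterization}, non-maximality means there exists $\alpha \in \Delta_M$ with $\theta_x(\alpha) > 0$ and $\theta_x(\alpha) \neq \alpha$. I want to turn this into an edge pointing \emph{into} $(M,x)$, i.e. find $\beta \in \Delta_{M'}$ for the predecessor $M'$ with the edge-defining property. The cleanest way is to pass through $\mathfrak{G}_{\theta'}$: by \cite[Section 3.3]{Lapid-Rogawski-periods-Galois} (Lapid--Rogawski's analysis of their graph), a vertex of $\mathfrak{G}_{\theta'}$ that is not of locally maximal weight has an incoming edge, and by the refinement Remark this lifts to an incoming edge $(M',x') \stackrel{n}{\searrow} (M,x)$ in $\mathfrak{G}$; concretely one takes $n^{-1}$ to be (a representative of) the elementary symmetry $s_\alpha$ attached to such an $\alpha$, sets $M' = n^{-1} M n$ and $x' = n^{-1}\cdot x$, and checks that the labelling condition $-\beta \neq \theta_{x'}(\beta) < 0$ holds for the relevant simple root $\beta \in \Delta_{M'}$ — this is a direct translation of $\theta_x(\alpha)>0$, $\theta_x(\alpha)\neq\alpha$ under conjugation by $s_\alpha$, using that $\theta_{x'} = s_\alpha^{-1}\theta_x s_\alpha$ on the relevant root/character spaces.

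Then I would set up the induction on the (nonnegative integer) weight deficit, namely the difference between the maximal weight in the connected component and the weight of the current vertex, or equivalently induct on $\ell_M$ of $w_{\theta' L}^G$-type bounds; each incoming edge strictly decreases the length $\ell(\iota_M(P\cdot x)\tau)$ relative to the target after appropriate normalization, equivalently strictly increases weight, so the process terminates. Concatenating the edges produced at each stage yields the desired path $(M_1,x_1)\stackrel{n_1}{\searrow}\cdots\stackrel{n_k}{\searrow}(M,x)$ with $(M_1,x_1)$ maximal. The main obstacle I anticipate is the bookkeeping in the lifting step: verifying that the incoming edge in $\mathfrak{G}_{\theta'}$ guaranteed by Lapid--Rogawski really does lift to an edge of the refined graph $\mathfrak{G}$ with a consistent choice of the symmetric-space point $x'$ and of the representative $n$, and that the weight/length function genuinely decreases at each step — this requires carefully matching the action of $\theta_x$ on $\fra_M^*$ with that of $w\theta'$ (as noted in Section~\ref{sec::graph-involution}, $x \in Mw = w\theta(M)$ so these actions agree) and invoking \cite[Lemma 3.2.1]{Lapid-Rogawski-periods-Galois} and the structure results of \cite[Section 3.3]{Lapid-Rogawski-periods-Galois}. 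Everything else is formal graph-theoretic iteration.
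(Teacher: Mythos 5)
Your proposal is correct and follows essentially the same route as the paper: use Proposition \ref{prop::max-t-involution-characterization} to produce, for a non-maximal $(M,x)$, a simple root $\alpha\in\Delta_M$ with $\theta_x(\alpha)>0$ and $\theta_x(\alpha)\neq\alpha$, obtain the incoming edge $(s_\alpha M s_\alpha^{-1}, n\cdot x)\stackrel{n^{-1}}{\searrow}(M,x)$, and conclude by induction on the weight, which increases by two along each such step by \cite[Lemma 3.2.1]{Lapid-Rogawski-periods-Galois}. The only difference is presentational: you route the existence of the incoming edge through the Lapid--Rogawski graph and its lifting to $\mathfrak{G}$, whereas the paper constructs the edge directly, but the underlying verification is the same.
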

\begin{proof}
	If $(M,x)$ is not maximal, then by Proposition \ref{prop::max-t-involution-characterization} we can choose $\alpha \in  \Delta_M$ such that $\theta_x(\alpha) >0$ and $\theta_x(\alpha) \ne \alpha$. Taking $n \in s_{\alpha}M$ we then have an edge
	\begin{align*}
		(s_{\alpha} M s_{\alpha}^{-1}, n \cdot x) \stackrel{n^{-1}}{\searrow} (M,x)
	\end{align*} 
	in $\mathfrak{G}$. It follows from \cite[Lemma 3.2.1]{Lapid-Rogawski-periods-Galois} that the weight of $(s_{\alpha} M s_{\alpha}^{-1}, n \cdot x)$ is bigger by two than the weight of $(M,x)$. Thanks to Proposition \ref{prop::max-t-involution-characterization} the proposition follows by induction on the weight of a vertex. 
\end{proof}

Let $(M,x)$ be a vertex in the graph $\mathfrak{G}$. We define a character $\delta_x$ on $M_x$ by
\begin{align}
	\delta_x (m) = \delta_{P_x}(m)\delta_P^{-1/2}(m).
\end{align}
Such characters arise naturally in the study of distinction problems of induced representations using Mackey theory. They are in general not trivial. We note, however, that for Galois pairs and symmetric pairs of Prasad-Takloo-Bighash type, $\delta_x \equiv \mathbf{1}$ for all $(M,x)$ (see \cite[Corollary 6.9]{Offen-ParabolicInduction-JNT} and \cite[Equation (5.3) and Remark 5.4]{Nadir-Broussous-PTB-MultiOne}). To each vertex $(M,x)$ and a representation $\sigma$ of $M$, we can formally attach the integral
\begin{align*}
	\int_{P_x \backslash G_x} \ell (\varphi_{\lambda} (g)) dg,
\end{align*}
where $\ell \in \Hom_{M_x}(\sigma,\delta_x)$ and $\lambda \in (\fra_{M,\BC}^*)^-_x := (\fra_{M,\BC}^*)^-_{\theta_x}$. In the next section we will prove that if $\delta_x$ is trivial then these integrals converge absolutely for $\lambda$ in a certain cone, satisfy certain functional equations and admit meromorphic continuation (see Theorem \ref{thm::main-1}). 
																							

\section{Main results}\label{sec::main}

\subsection{}

In the following result, under a convergence assumption, we obtain functional equations for intertwining periods that are local analogues of \cite[Proposition 10.1.1]{Lapid-Rogawski-periods-Galois} in the global Galois setting (see also \cite[Proposition 33]{J-L-R-periods-JAMS}). It relates the intertwining periods attached to two adjacent vertices in the graph (cf. \cite[Proposition 8.4]{Matringe-Gamma-IntertwiningPeriod-JFA} for Galois pairs of certain inner forms of $\GL_n$). 

\begin{prop}\label{prop::intt-periods-path}
	Let $P = M\ltimes U$ and $P_1 = M_1\ltimes U_1$ be two parabolic subgroups of $G$. Assume that $(M,x) \stackrel{n}{\searrow} (M_1,x_1)$ is an edge on the graph $\mathfrak{G}$ and $\alpha \in \Delta_M$ is such that $n \in s_{\alpha}M$. Let $\sigma$ be a representation of $M$ and $\ell \in \Hom_{M_x}(\sigma,\delta_x)$. Choose $c > 0$ such that the intertwining operator $M(n,\sigma,\lambda)$ is given by an absolutely convergent integral for every $\lambda \in \fra_M^*$ such that $\langle\lambda,\alpha^\vee\rangle>c$ (and in particular, if $\lambda\in \mathcal{D}_{M,\theta_x}(c)$). Then for $\varphi\in I_P^G(\sigma)$
	we have
	\begin{align}\label{formula::intt-period-path-equality}
		\int_{P_x \backslash G_x} \ell (\varphi_{\lambda}(g)) dg = \int_{(P_1)_{x_1} \backslash G_{x_1} } \ell ((M(n,\sigma,\lambda) I_P^G(n,\sigma,\lambda) \varphi)_{s_{\alpha}\lambda} (g)) dg
	\end{align}
	in the sense that, if the left hand side of \eqref{formula::intt-period-path-equality} is absolutely convergent, then so is the right hand side and the two integrals are equal to each other.
\end{prop}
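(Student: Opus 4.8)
The plan is to reduce the equality of the two intertwining periods to a Fubini-type unfolding of the integral over $P_x \backslash G_x$, followed by an application of Lemma \ref{lem::intertw'+linearform}. The starting point is the observation that the edge $(M,x) \stackrel{n}{\searrow} (M_1,x_1)$ means $n \in s_\alpha M$ with $-\alpha \ne \theta_x(\alpha) < 0$, $M_1 = nMn^{-1}$, $x_1 = n\cdot x$, and $(M_1)_{x_1} = n M_x n^{-1}$. Hence conjugation by $n$ gives a measure-preserving bijection $G_x \to G_{x_1}$ carrying $P_x$ into $M_1 \ltimes (U_1 \cap nUn^{-1})$ — but not all of $(P_1)_{x_1}$, because $P_1$ contains the root subgroup $U_{-\alpha_0}$-part that $nPn^{-1}$ does not. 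The discrepancy between $nP_xn^{-1}$ and $(P_1)_{x_1}$ is exactly a unipotent piece of the form $\overline{U}_1^\alpha := (U_1 \cap nUn^{-1}\backslash U_1)$-type, i.e. the domain of integration appearing in the intertwining operator $M(n,\sigma,\lambda)$.

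\textbf{Key steps.} First I would rewrite $\int_{P_x\backslash G_x} \ell(\varphi_\lambda(g))\,dg$ by substituting $g \mapsto n^{-1}g$ (using $G_{x_1} = nG_xn^{-1}$ and invariance of Haar measure), converting it to an integral over $nP_xn^{-1} \backslash G_{x_1}$ of $\ell(\varphi_\lambda(n^{-1}g))$; note $\varphi_\lambda(n^{-1}g) = (I_P^G(n,\sigma,\lambda)\varphi)_\lambda(g)$ essentially by definition of the induced action, up to bookkeeping of the $H_M$-twist. Second, I would factor the quotient $nP_xn^{-1}\backslash G_{x_1}$ through $(P_1)_{x_1}\backslash G_{x_1}$: by Lemma \ref{lem::max vertice}-type local analysis of the root $\alpha$ (only $\alpha$ is flipped, all other simple roots of $M$ are fixed by the relevant involution), the intermediate quotient $nP_xn^{-1}\backslash (P_1)_{x_1}$ is isomorphic as a measure space to $(wUw^{-1}\cap U_1)\backslash U_1$ with $w = s_\alpha$ (intersected with the $\theta_{x_1}$-fixed points, which for this rank-one situation is the full unipotent because $\theta_{x_1}(\alpha) < 0$ makes $U_1^\alpha$ entirely inside $G_{x_1}$ — this is the crucial point where the edge condition $-\alpha \neq \theta_x(\alpha)<0$ enters). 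Third, apply the disintegration of the Haar measure to write the $(P_1)_{x_1}\backslash G_{x_1}$ integral as an iterated integral, with inner integral over that unipotent quotient; the inner integral is precisely
\begin{align*}
	\int_{(s_\alpha U s_\alpha^{-1} \cap U_1)\backslash U_1} \ell\big((I_P^G(n,\sigma,\lambda)\varphi)_\lambda(n^{-1}ug)\big)\,du,
\end{align*}
and under the stated convergence hypothesis on the left side of \eqref{formula::intt-period-path-equality}, Fubini legitimizes the interchange and gives absolute convergence of this inner integral for almost every $g$. Finally, invoke Lemma \ref{lem::intertw'+linearform} with the linear form $\ell$, the element $w = s_\alpha$, the representation $\sigma$ and section $I_P^G(n,\sigma,\lambda)\varphi$: it identifies the inner integral with $\ell\big((M(n,\sigma,\lambda)I_P^G(n,\sigma,\lambda)\varphi)_{s_\alpha\lambda}(g)\big)$, yielding the right-hand side of \eqref{formula::intt-period-path-equality}, and the reverse convergence implication follows by reading the same chain backwards.

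\textbf{Main obstacle.} The technical heart is the second step: correctly identifying the fiber $nP_xn^{-1}\backslash(P_1)_{x_1}$ with the unipotent quotient $((s_\alpha U s_\alpha^{-1}\cap U_1)\backslash U_1)^{\theta_{x_1}}$ and checking that this coincides with the \emph{full} unipotent quotient $(s_\alpha U s_\alpha^{-1}\cap U_1)\backslash U_1$ appearing in the definition \eqref{formula::defn--intertwinner} of $M(n,\sigma,\lambda)$ — i.e. that the relevant root subgroup of the intertwining integral lies inside $G_{x_1}$. This is where the hypothesis $\theta_x(\alpha) < 0$ with $\theta_x(\alpha)\ne -\alpha$ must be used: it forces the rank-one unipotent radical spanned by the positive multiples of (a lift of) $\alpha$ in $U_1$ to be $\theta_{x_1}$-stable and in fact to sit in the fixed-point group, so no fixed-point constraint cuts down the domain. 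Everything else — the measure-theoretic disintegration, the substitution $g\mapsto n^{-1}g$, the matching of $H_M$-twists between $\lambda$ and $s_\alpha\lambda$ via \eqref{formula::holo-section-transitivity}-style bookkeeping, and the Fubini argument — is routine given the hypotheses, and the final identification is handed to us by Lemma \ref{lem::intertw'+linearform}. One should also double-check the measure normalizations so that no spurious constant appears, by comparing against the Galois-pair computation in \cite[Proposition 8.4]{Matringe-Gamma-IntertwiningPeriod-JFA}.
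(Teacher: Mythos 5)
Your proposal is correct and follows essentially the same route as the paper's proof, which writes down the identical chain of equalities but traverses it in the opposite direction (starting from the right-hand side, unfolding the intertwining operator via Lemma \ref{lem::intertw'+linearform}, and recombining the fiber integral over $nP_xn^{-1}\backslash (P_1)_{x_1}\cong V_{x_1}\backslash (U_1)_{x_1}\cong U_1\cap L$ using Lemmas 6.3 and 6.4 of \cite{Offen-ParabolicInduction-JNT}, with Fubini supplying the convergence transfer). The one imprecision is in your description of the crux: the edge condition does not put $U_1\cap L$ literally inside the fixed-point group; rather it yields an isomorphism of quotients $V_{x_1}\backslash (U_1)_{x_1}\cong U_1\cap L$ (this is exactly \cite[Lemma 6.4]{Offen-ParabolicInduction-JNT}), which together with the left $V$-invariance of $u\mapsto\ell(\varphi_\lambda(n^{-1}ugn))$ identifies the inner integral with the intertwining-operator integral over $(s_\alpha Us_\alpha^{-1}\cap U_1)\backslash U_1$.
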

\begin{proof}
	Denote by $Q = L\ltimes V$ the parabolic subgroup of $G$ containing $P$ such that $\Delta_P^Q = \{ \alpha\}$. Thus, $P_1$ is contained in $Q$ and $\Delta_{P_1}^Q = \{ -s_{\alpha} \alpha \}$. 
	We begin with the right hand side of \eqref{formula::intt-period-path-equality} and explain how to formally derive that it equals the left hand side. The absolute convergence required to prove the proposition follows from the assumptions and an application of the Fubini theorem at the second to last equality obtained.	
	By Lemma \ref{lem::intertw'+linearform} we have	
	\begin{equation}
		\begin{aligned}\label{formula::intt-periods-path-I}
			&\int_{(P_1)_{x_1} \backslash G_{x_1} } \ell ((M(n,\sigma,\lambda) I_P^G(n,\sigma,\lambda)\varphi)_{s_{\alpha}\lambda} (g)) dg  \\
			&=  \int_{(P_1)_{x_1} \backslash G_{x_1} } \int_{U_1 \cap L} \ell (\varphi_{\lambda}(n^{-1} ugn)) dudg. 
		\end{aligned}
	\end{equation}
	By \cite[Lemma 6.4]{Offen-ParabolicInduction-JNT}, one has $V_{x_1} = n U_x n^{-1}$ and an isomorphism $V_{x_1} \backslash (U_1)_{x_1} \cong U_1 \cap L$. For fixed $g \in G$, the function $u \mapsto \ell (\varphi_{\lambda}(n^{-1}ugn))$ on $U_1$ is left invariant by $V$. Thus, the right hand side of \eqref{formula::intt-periods-path-I} equals to
	\begin{align}\label{formula::intt-periods-path-II}
		\int_{(P_1)_{x_1} \backslash G_{x_1} }\int_{V_{x_1} \backslash (U_1)_{x_1}} \ell(\varphi_{\lambda}(n^{-1}ugn)) dudg.
	\end{align}
	By \cite[Lemma 6.3]{Offen-ParabolicInduction-JNT}, one has $(P_1)_{x_1} = (M_1)_{x_1} \ltimes (U_1)_{x_1}$ and $P_x = M_x \ltimes U_x$. Then one has $nP_x n^{-1} = (M_1)_{x_1} \ltimes V_{x_1}$. Hence \eqref{formula::intt-periods-path-II} equals to
	\begin{align*}
		&\int_{(P_1)_{x_1} \backslash G_{x_1} } \int_{ nP_x n^{-1} \backslash (P_1)_{x_1}} \delta_{(P_1)_{x_1}}^{-1}(p) \ell ( \varphi_{\lambda} (n^{-1} p g n)) dp dg  \\
		& = \int_{n P_x n^{-1} \backslash G_{x_1}} \ell (\varphi_{\lambda}(n^{-1} g n )) dg  = \int_{P_x \backslash G_x} \ell (\varphi_{\lambda} (g)) dg.
	\end{align*}
	The proposition follows.
\end{proof}

We have the following lemma as an intermediate step when considering the convergence of intertwining periods in the case of maximal vertices.
\begin{lem}\label{lem::estimation-abs-generalizedMC}
	Let $P = M\ltimes U$ be a standard parabolic subgroup of $G$. Let $(M,x)$ be a maximal vertex in the graph $\mathfrak{G}$ such that $\delta_x \equiv \mathbf{1}$. Write $L = L_{M,x}$ and let $Q = L \ltimes V$ be the parabolic subgroup with Levi subgroup $L$. Let $\sigma$ be a representation of $M$ of finite length and $\ell \in \Hom_{M_x}(\sigma,\mathbf{1})$. 
	Then there exists $ r > 0$ such that for any $f \in I_{P\cap L}^L(\sigma)$, there is a constant $C _f > 0$ such that
	\begin{align}\label{formula::majorant-inner closed}
		\int_{ (P \cap L)_x \backslash L_x} \lvert \ell (f(hl)) \rvert dh \leqslant C_f \lVert L_x l \rVert_{L_x \backslash L}^r
	\end{align}
	for all $l \in L$. 
	When $F=\mathbb{R}$, one can moreover assume that $C_f$ is such that $f\mapsto C_f$ is continuous. 
\end{lem}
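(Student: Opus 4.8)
The statement is a closed-orbit analogue of the generalized Blanc--Delorme estimate (Lemma \ref{lem::abs convergence--generalization-BD}), but now the relevant orbit is closed in $L$ rather than open, so the argument should be simpler and more explicit. The key structural input is Lemma \ref{lem::max vertice}: since $(M,x)$ is maximal with associated Levi $L=L_{M,x}$, the involution $\theta_x$ stabilizes $L$ and $L\cap P$, so $(L\cap P)_x=(L\cap P)^{\theta_x}$ is a parabolic subgroup of the reductive group $L_x=L^{\theta_x}$ with $(L\cap P)_x\backslash L_x$ compact, by the same reasoning as in Subsection \ref{sec::maximal} (closed orbit case) and Lemma \ref{lem::closed}. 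The plan is to reduce \eqref{formula::majorant-inner closed} to a statement about this compact quotient together with a bound on how the integrand grows under translation by the split part of $L$.

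First I would fix an Iwasawa-type decomposition $L=(L\cap P)^{\theta_x}\cdot K_L$ adapted to $\theta_c$ (using that $(L\cap P)_x\backslash L_x$ is compact, so $L_x=(L\cap P)_x K_{L_x}$ for $K_{L_x}=K\cap L_x$), and write $f\in I_{P\cap L}^L(\sigma)$ explicitly via $f(hl)=\delta_{P\cap L}(m(hl))^{1/2}\sigma(m(hl))f(k(hl))$ for the Iwasawa decomposition of $hl$ relative to $P\cap L$. Because $\delta_x\equiv \mathbf{1}$, the character $\delta_{(P\cap L)_x}\delta_{P\cap L}^{-1/2}$ is trivial on $M_x$ — this is exactly the content of $\delta_x\equiv \mathbf 1$ restricted to the Levi, using Proposition \ref{prop::property-maximal}(1) to match $\delta_{P_x}$ with $\delta_{(P\cap L)_x}$ and $\delta_P^{1/2}$ with $\delta_{P\cap L}^{1/2}$ up to the factor coming from $V$, which is $\theta_x$-anti-invariant and hence disappears on $M_x$. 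So after unfolding $(P\cap L)_x\backslash L_x=\big((M\cap L)_x\backslash (P\cap L)_x\big)\backslash L_x$ the $\ell$-equivariance of the integrand is consistent, and the integral over the (compact) quotient converges. The bound on $\|M_x m\|_{M_x\backslash M}$-type growth is supplied by the hypothesis that $\sigma$ has finite length: then $\ell\in \Hom_{M_x}(\sigma,\mathbf{1})$ automatically satisfies an estimate $|\ell(\sigma(m)v)|\le C_v\|M_x m\|_{M_x\backslash M}^r$ for some $r>0$ (this is the finite-length input also used in the proof of Proposition \ref{prop::Blanc-Delorme} and in \cite{Blanc-Delorme}, \cite[Lemma 8.4]{Matringe-Gamma-IntertwiningPeriod-JFA}), and in the real case $v\mapsto C_v$ can be taken continuous since $\sigma$ is of moderate growth.

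Combining these, I would estimate $\int_{(P\cap L)_x\backslash L_x}|\ell(f(hl))|\,dh$ by pulling $l$ through: write $hl=h'\,k'$ with $h'\in (P\cap L)^{\theta_x}$, $k'\in K_L$ (abusing notation, the Iwasawa pieces depend continuously on $(h,l)$), bound $|\ell(f(hl))|$ by $\delta_{P\cap L}(m(hl))^{1/2}$ times a seminorm $q(f(k(hl)))$ times $\|M_x m(hl)\|^r$, and note that on the compact quotient $h$ ranges over a compact set of representatives while the $l$-dependence of the Iwasawa components is controlled by $\|L_x l\|_{L_x\backslash L}$ — concretely $\|M_x m(hl)\|_{M_x\backslash M}\lesssim \|L_x l\|_{L_x\backslash L}$ and $\delta_{P\cap L}(m(hl))^{1/2}\lesssim \|L_x l\|_{L_x\backslash L}^{r'}$ uniformly for $h$ in the compact set, by standard properties of norms on homogeneous spaces (\cite[Section I.1]{Waldspurger-Plancherel}, \cite[Section 0]{Brylinski-Delorme-H-inv-form-MeroExtension-Invention}). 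Since $q\circ f\circ k(\cdot)$ is bounded on the compact range (continuity of $f$ and of $q$), integrating over the compact quotient produces $C_f\|L_x l\|_{L_x\backslash L}^r$ after enlarging $r$, and $C_f$ is a continuous function of $f$ because all the bounds are through continuous seminorms.

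\textbf{Main obstacle.} The genuinely delicate point is the bookkeeping of modulus characters: one must verify that the $\theta_x$-anti-invariant contributions from $V$ and from the non-standard opposite $Q^-=\theta_x(Q)$ (Lemma \ref{lem::max vertice}) cancel so that the hypothesis $\delta_x\equiv\mathbf 1$ indeed forces $\delta_{(P\cap L)_x}\delta_{P\cap L}^{-1/2}|_{M_x}=\mathbf 1$, making the iterated integral over $(M\cap L)_x\backslash (P\cap L)_x$ and then over $(P\cap L)_x\backslash L_x$ well-posed with the given $\ell$; this is where Proposition \ref{prop::property-maximal} and \cite[Lemma 6.3, Lemma 6.4]{Offen-ParabolicInduction-JNT} do the real work. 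The second, more technical, obstacle is making the norm comparisons $\|M_x m(hl)\|_{M_x\backslash M}\lesssim \|L_x l\|_{L_x\backslash L}$ and the polynomial control of $\delta_{P\cap L}(m(hl))^{1/2}$ uniform over the compact set of $h$-representatives, and — in the archimedean case — ensuring the constant depends continuously on $f$; for this I would follow verbatim the scheme of \cite[Lemma 10.1.2]{Wallach-RRG-II} as adapted in the proof of Lemma \ref{lem::intertw'+linearform} above, replacing the open-orbit unipotent integral there with the compact integral over $(P\cap L)_x\backslash L_x$.
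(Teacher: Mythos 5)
Your overall scheme is the paper's: verify via Lemma \ref{lem::max vertice} that $P_x=(P\cap L)_x$ and that $\delta_x\equiv\mathbf 1$ makes the quotient integral over the compact space $(P\cap L)_x\backslash L_x$ well posed, then write $hl=u_1m_1k_1$ in Iwasawa form and combine the Lagier/KSS growth bound for $\ell$ with a norm comparison $\lVert M_xm_1\rVert_{M_x\backslash M}\lesssim\lVert L_xl\rVert_{L_x\backslash L}$ (which the paper proves exactly as you suggest, via $\lVert L_xl\rVert=\lVert L_xhl\rVert$ for $h\in L_x$, Lagier's (3.9) and Waldspurger's Lemme II.3.1). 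However, one of your two "concrete" intermediate estimates is false: the pointwise bound $\delta_{P\cap L}(m(hl))^{1/2}\lesssim\lVert L_xl\rVert_{L_x\backslash L}^{r'}$ uniformly for $h$ in a compact set of representatives cannot hold. Take $h=e$ and $l=m_1\in M_x$; then $\lVert L_xl\rVert_{L_x\backslash L}=\lVert\theta_x(l)^{-1}l\rVert=\lVert e\rVert$ is bounded, while $\delta_{P\cap L}(m_1)^{1/2}$ is unbounded on $M_x$ in general (the hypothesis $\delta_x\equiv\mathbf 1$ only says $\delta_{(P\cap L)_x}=\delta_{P\cap L}^{1/2}$ on $M_x$, not that $\delta_{P\cap L}^{1/2}$ is trivial there). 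So the modulus factor cannot be controlled pointwise by the norm on $L_x\backslash L$.

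The correct handling, which is what the paper does, is to keep that factor inside the integral: writing $\lvert\ell(f(hl))\rvert=\phi(hl)\,\lvert\ell(\sigma(m_1)f(k_1))\rvert$ with $\phi\in I_{P\cap L}^L(\mathbf 1)$ equal to $1$ on $K\cap L$, one pulls out only the factor $\lVert M_xm_1\rVert^r\leqslant C_1^r\lVert L_xl\rVert^r$ and is left with $\int_{(P\cap L)_x\backslash L_x}\phi(hl)\,dh$, which is a relative matrix coefficient of the trivial representation and is bounded by $C\lVert L_xl\rVert_{L_x\backslash L}^{r''}$ by a second application of Lagier's Theorem 4 (resp. KSS Theorem 5.11 when $F=\BR$). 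This integrated bound is exactly what survives in the regime where your pointwise bound fails (for $l\in L_x$ the integral is constant in $l$ by invariance of the measure, even though the integrand is not bounded). With this replacement the rest of your argument, including the continuity of $f\mapsto C_f$ via $\sup_{k\in K\cap L}q(f(k))$ in the archimedean case, goes through as in the paper.
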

\begin{proof}
We first treat the $p$-adic case, and then indicate the modifications to be done when $F=\mathbb{R}$. In order that the integral in \eqref{formula::majorant-inner closed} formally makes sense, we first show that $\delta_{P_x}\delta_P^{-1/2} = \delta_{(P \cap L)_x}\delta_{P \cap L}^{-1/2}$ on $M_x$. Note that we have $P_x = ( P \cap L)_x$. Indeed, by Lemma \ref{lem::max vertice} and in its notation we have $\theta_x(Q) = Q^-$ and therefore 
	\begin{align*}
		P_x = (P \cap \theta_x(P))_x \subseteq (P\cap Q^-)_x = (P \cap L)_x.
	\end{align*}
	We have $\delta_P(m) = \delta_{P \cap L}(m) \delta_Q(m)$ for $m \in M$. Since $\theta_x(Q) = Q^-$ we have $\delta_Q(l)=\delta_{Q^-}(\theta_x(l))$, $l\in L$. Since, furthermore, $\delta_{Q^-}=\delta_Q^{-1}$ on $L$ it follows that $\delta_Q$ is trivial on $L_x$ and in particular on $M_x$. The required identity follows. The integral converges absolutely as $P \cap L$ is $\theta_x$-stable by Lemma \ref{lem::max vertice} and the quotient is thus compact by \cite[Lemma 3.1]{Gurevich-Offen-Integrability}.
	
	The argument in \cite[Lemma 8.3]{Matringe-Gamma-IntertwiningPeriod-JFA} can be applied here with little modification to obtain the desired bound. We include it here for the sake of completeness. Let $\phi \in I_{P \cap L}^L (\mathbf{1})$ be such that $\phi$ equals to $1$ on $K \cap L$. For $h \in L_x$ and $ l \in L$, we write $hl = u_1m_1k_1$ with $u_1 \in U \cap L$, $m_1 \in M$ and $k_1 \in K \cap L$. Then 
	\[
	 \lvert \ell (f(hl)) \rvert = \phi (hl) \lvert \ell (\sigma(m_1) f(k_1)) \rvert. 
	 \]
	 By the bound of Lagier in \cite[Theorem 4, (i)]{Lagier-2008} and the fact that $f(k_1)$ takes only finitely many vectors in the space of $\sigma$ as $h$ and $l$ vary, we have
	\begin{align*}
		\lvert \ell (f(hl)) \rvert \leqslant C \phi(hl) \lVert M_x m_1 \rVert^r_{M_x \backslash M}
	\end{align*}
 	for some constants $C,r>0$ independent of $h$ and $l$. We claim that
 	\begin{align*}
 		\lVert M_x m_1 \rVert_{M_x \backslash M} \leqslant C_1 \lVert L_x l \rVert_{L_x \backslash L}
 	\end{align*}
 	for some $C_1 >0$ independent of $h$ and $l$. In fact, 
 	\begin{align*}
 		\lVert L_x l \rVert_{L_x \backslash L} = \lVert L_x hl \rVert_{L_x \backslash L} \geqslant C_2 \lVert  L_x u_1 m_1 \rVert_{L_x \backslash L}
 	\end{align*}
 	for some $C_2 > 0$ by \cite[(3.9)]{Lagier-2008}. We have
 	\begin{align*}
 		\lVert L_x u_1 m_1 \rVert_{L_x \backslash L } = \lVert \theta_x(u_1 m_1)^{-1} u_1 m_1 \rVert = \lVert \theta_x(m_1)^{-1} m_1 v \rVert
 	\end{align*}
 	for some $v \in U \cap L$ as $U \cap L$ is $\theta_x$-stable by Lemma \ref{lem::max vertice} and is normalized by $M$. Now we get
 	\begin{align*}
 		 \lVert \theta_x(m_1)^{-1} m_1 v \rVert \geqslant  C_3 \lVert \theta_x(m_1)^{-1} m_1 \rVert = C_3 \lVert M_x m_1 \rVert_{M_x \backslash M},
 	\end{align*}
	for some $C_3 >0$. Here the inequality follows from \cite[Lemme II.3.1]{Waldspurger-Plancherel}. Note that we can apply directly \cite[Theorem 4, (1)]{Lagier-2008} to the integral
 	\begin{align*}
 		\int_{ (P \cap L)_x \backslash L_x}  \phi (hl) dh
 	\end{align*}
 	to obtain a bound of a similar type. The lemma now follows combining the two bounds. 
	
	When $F=\mathbb{R}$, the proof is similar up to replacing Lagier's bound by that of \cite[Theorem 5.11]{KSS} which could be reformulated in the same form as Lagier's bound. Also, noting that the quantity $q(v)$ in this bound is such that $q$ is continuous, together with the compactness of $K\cap L$, allows to skip the finite values of $k_1$ argument above. Also one notes that the constant $C$ can be chosen as $q_K(f):=\mathrm{sup}_{k \in K} q(f(k))$, which depends continuously on $f$. Later in the argument, one directly uses the majorization of relative coefficients in \cite[Theorem 5.11]{KSS} instead of that in \cite{Lagier-2008}. 
\end{proof}

\begin{thm}\label{thm::main-1}
Let $P = M\ltimes U$ be a standard parabolic subgroup of $G$, $x \in X[M]$ such that $\delta_x \equiv \mathbf{1}$, $\sigma$ a representation of $M$ of finite length and $ \ell \in \Hom_{M_x}(\sigma,\mathbf{1})$. Assume that $\sigma$ satisfies Assumption \ref{as} when $F=\BR$. Then there exists a non-empty open cone $C$ in $(\fra^*_{M})^-_x$ such that for all $\lambda \in (\fra^*_{M,\BC})_x^-$ with $\Re \lambda\in C$ the integral
	\begin{align}
		J_P^G (\varphi;x,\ell,\sigma,\lambda) = \int_{P_x \backslash G_x} \ell (\varphi_{\lambda}(g)) dg
	\end{align}
	converges absolutely for every $\varphi \in I_P^G(\sigma)$. Furthermore, the linear form $J_P^G(x,\ell,\sigma,\lambda)$ admits a meromorphic continuation to $\lambda \in (\fra^*_{M,\BC})^-_x$ and satisfies the following functional equations. Whenever $\alpha \in \Delta_P$ and $n \in s_{\alpha} M$ are such that $(M,x) \stackrel{n}{\searrow} (M_1,x_1)$ in $\mathfrak{G}$ we have
	\begin{align*}
		J_P^G(\varphi;x,\ell,\sigma,\lambda) = J_{P_1}^G ( M(n,\sigma,\lambda)I_P^G(n,\sigma,\lambda) \varphi;x_1,\ell,s_{\alpha}\sigma, s_{\alpha}\lambda).
	\end{align*}
\end{thm}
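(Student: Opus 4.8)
\emph{Strategy.} The plan is to prove the statement first at the \emph{maximal} vertices of the graph $\mathfrak{G}$, and then to propagate it to an arbitrary vertex along a path joining it to a maximal one, as supplied by Proposition~\ref{prop::maximal vertice-path}; the propagation will rest on the functional equations of Proposition~\ref{prop::intt-periods-path} together with the generic invertibility of standard intertwining operators established in Proposition~\ref{prop::invert--intertwinner}.

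\emph{The maximal case.} Suppose $(M,x)$ is maximal and put $L=L_{M,x}$, $Q=L\ltimes V$. By Lemma~\ref{lem::max vertice} one has $\theta_x(L)=L$, $\theta_x(L\cap P)=L\cap P$ and $\theta_x(Q)=Q^-$, and, as in the proof of Lemma~\ref{lem::estimation-abs-generalizedMC}, $P_x=(P\cap L)_x$, $L_x=Q^{\theta_x}$, and $\delta_Q$ is trivial on $L_x$. I would write the integral over $P_x\backslash G_x$ as an iterated integral over $L_x\backslash G_x$ with inner fibre $(P\cap L)_x\backslash L_x$; here the hypothesis $\delta_x\equiv\mathbf 1$ is exactly what makes the modulus characters match, since it forces $\delta_{P_x}=\delta_{P\cap L}^{1/2}$ on $M_x$. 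The inner integral is a closed intertwining period for the $\theta_x$-stable parabolic $P\cap L$ of $L$: since $\delta_{(P\cap L)_x}\delta_{P\cap L}^{-1/2}=\delta_x=\mathbf 1$ on $M_x$, Lemma~\ref{lem::closed} shows that $f\mapsto\int_{(P\cap L)_x\backslash L_x}\ell(f(h))\,dh$ defines a linear form $\tilde\ell\in\Hom_{L_x}(I_{P\cap L}^L(\sigma),\mathbf 1)$ (continuous when $F=\BR$), nonzero if and only if $\ell$ is. Since $L=Q\cap\theta_x(Q)$, the outer integral over $L_x\backslash G_x$ is precisely the open intertwining period of Section~\ref{sec::Blanc-Delorme} attached to the involution $\theta_x$, the parabolic $Q$, the finite length representation $I_{P\cap L}^L(\sigma)$ of $L$ (parabolic induction being exact), and the linear form $\tilde\ell$. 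Using transitivity of parabolic induction via~\eqref{formula::holo-section-transitivity} and the vanishing of $\langle\lambda,H_L(h)\rangle$ and $\langle\rho_Q,H_L(h)\rangle$ for $h\in L_x$ and $\lambda\in(\fra_L^*)^-_{\theta_x}$, one checks that $\int_{L_x\backslash G_x}\tilde\ell\bigl((F_\varphi)_\lambda(g)\bigr)\,dg=J_P^G(\varphi;x,\ell,\sigma,\lambda)$. Proposition~\ref{prop::Blanc-Delorme} then yields absolute convergence on a non-empty open cone inside $(\fra_L^*)^-_{\theta_x}$, which equals $(\fra_M^*)^-_x$ by Proposition~\ref{prop::property-maximal}(1), together with the meromorphic continuation. (Assumption~\ref{as} is not needed for this part.)

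\emph{The general case.} Given an arbitrary vertex $(M,x)$, fix by Proposition~\ref{prop::maximal vertice-path} a path $(M^{(0)},x^{(0)})\searrow(M^{(1)},x^{(1)})\searrow\cdots\searrow(M^{(r)},x^{(r)})=(M,x)$ with $(M^{(0)},x^{(0)})$ maximal. For the convergence I would run \emph{down} the path: Proposition~\ref{prop::intt-periods-path} asserts that if the period at the top of an edge converges absolutely then so does the period at the bottom, evaluated on the image of the corresponding operator $M(n,\cdot,\lambda)I_{P}^G(n,\cdot,\lambda)$; taking $\Re\lambda$ sufficiently positive so that this intertwining operator is given by a convergent integral, Proposition~\ref{prop::invert--intertwinner} makes it an isomorphism — directly in the $p$-adic case, and for $\lambda$ generic with prescribed real part when $F=\BR$, which is enough because absolute convergence of the period depends only on $\Re\lambda$. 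Iterating from the maximal vertex gives absolute convergence of $J_P^G(\varphi;x,\ell,\sigma,\lambda)$ for all $\varphi$ on a non-empty open cone $C$. For the meromorphic continuation and the functional equations I would read Proposition~\ref{prop::intt-periods-path} in the other direction: along the last edge it expresses $J_P^G(\varphi;x,\ell,\sigma,\lambda)$ in terms of the period at $(M^{(r-1)},x^{(r-1)})$ evaluated on $T^{-1}\varphi$, where $T$ is the (generically invertible) intertwining operator of that edge and $\lambda$ is reflected; iterating up to $(M^{(0)},x^{(0)})$ expresses $J_P^G(\varphi;x,\ell,\sigma,\lambda)$ as the value at the maximal vertex of a meromorphic family of operators applied to $\varphi$, composed with an affine substitution in $\lambda$. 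Since the maximal-vertex period is meromorphic by the first part, $J_P^G(x,\ell,\sigma,\lambda)$ is meromorphic; it agrees with the convergent integral on $C$, hence is its meromorphic continuation. The functional equation is then exactly Proposition~\ref{prop::intt-periods-path}, now an identity of meromorphic functions, valid because it holds on the open cone of convergence.

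\emph{Main obstacle.} The real work is the maximal case: recognizing the intertwining period there as a closed period — which by Lemma~\ref{lem::closed} is a genuine $L_x$-invariant linear form on $I_{P\cap L}^L(\sigma)$, so that Proposition~\ref{prop::Blanc-Delorme} applies directly and delivers the meromorphic continuation, rather than merely the absolute-convergence bound of Lemma~\ref{lem::estimation-abs-generalizedMC} — composed with an open period, and tracking all modulus characters so that the hypothesis $\delta_x\equiv\mathbf 1$ is used precisely where it is required. The propagation is essentially formal; its only delicate point is the archimedean use of generic invertibility of intertwining operators, which is handled by the observation that absolute convergence of the period is a condition on $\Re\lambda$ alone.
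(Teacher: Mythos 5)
Your architecture matches the paper's: base the induction at a maximal vertex, where the period factors as a closed period on $L$ composed with an open (Blanc--Delorme) period over $Q_x\backslash G_x$, then propagate down a path of $\mathfrak{G}$ using Proposition~\ref{prop::intt-periods-path} and Proposition~\ref{prop::invert--intertwinner}; your genericity-in-$\lambda$ argument in the real case is fine and even slightly cleaner than the paper's convexity argument. However, your treatment of absolute convergence at the maximal vertex has a genuine gap. Proposition~\ref{prop::Blanc-Delorme}, applied to the linear form $\Lambda_\ell=\tilde\ell$, gives convergence of $\int_{Q_x\backslash G_x}\bigl|\Lambda_\ell((F_\varphi)_\lambda(g))\bigr|\,dg$, i.e.\ with the absolute value \emph{outside} the inner integral over $(P\cap L)_x\backslash L_x$. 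What the theorem asserts --- and what the Fubini step in the proof of Proposition~\ref{prop::intt-periods-path} requires in order to propagate convergence down an edge --- is finiteness of $\int_{Q_x\backslash G_x}\int_{(P\cap L)_x\backslash L_x}\bigl|\ell(\varphi_\lambda(hg))\bigr|\,dh\,dg$, with the absolute value \emph{inside}. These are not equivalent (cancellation may occur in the inner integral). This is precisely why the paper proves Lemma~\ref{lem::abs convergence--generalization-BD} (convergence of the Blanc--Delorme integral against a non-linear, polynomially bounded $L_x$-invariant functional $\eta$) and feeds into it the bound of Lemma~\ref{lem::estimation-abs-generalizedMC}, which you explicitly set aside. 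Without these two lemmas your maximal-vertex step only yields convergence of an iterated integral, and the downward propagation of absolute convergence collapses.

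The second issue is your route to the meromorphic continuation at a general vertex: you go \emph{up} to the maximal vertex, which forces you to apply the inverses $M(n,\sigma,\lambda)^{-1}$ and to claim that the resulting family of operators is meromorphic in $\lambda$. Proposition~\ref{prop::invert--intertwinner} gives invertibility for generic fixed $\lambda$, but not meromorphy of $\lambda\mapsto M(n,\sigma,\lambda)^{-1}$; that requires an extra (standard, but unproved here) input, e.g.\ the identity $M(n^{-1},w\sigma,w\lambda)\circ M(n,\sigma,\lambda)=j(\sigma,\lambda)\,\mathrm{id}$ with $j$ meromorphic, or a reduction to finite-dimensional spaces of $K_0$-fixed (resp.\ $K$-isotypic) vectors where Cramer's rule applies. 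The paper avoids inversion altogether for this step: it descends from $(M,x)$ to a \emph{minimal} vertex, where the continuation is known from the arguments of \cite[Corollary 1]{Matringe-Offen-InttPeriods-PLMS}, and composes with the (non-inverted) meromorphic intertwining operators along the way. If you want a self-contained argument via the maximal vertex, you must first establish the meromorphy of the inverse operators.
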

\begin{proof}
	Assume first that $(M,x)$ is a maximal vertex. We justify the absolute convergence and meromorphic continuation in this case. Let $Q = L\ltimes V$ be the parabolic subgroup with $L=L_{M,x}$. Note that $Q_x = L_x$ is unimodular and $P_x = ( P \cap L)_x$ (see Lemma \ref{lem::max vertice}). We formally have
	\begin{align}\label{formula::main-1-1}
		\int_{P_x \backslash G_x} \ell (\varphi_{\lambda} (g)) dg = \int_{Q_x \backslash G_x} \int_{ P_x \backslash Q_x}  \ell (\varphi_{\lambda} (h g )) dh dg.
	\end{align}
	Let $\eta$ be the non-negative function on $I_{ P \cap L}^L(\sigma)$ defined by
	\begin{align*}
		\eta (f) = \int_{ (P \cap L)_x \backslash L_x} \lvert \ell (f(h)) \rvert dh, \quad f\in I_{P \cap L}^L (\sigma).
	\end{align*}
	In order to apply Lemma \ref{lem::abs convergence--generalization-BD} we need to check that $\eta$ is continuous when $F=\mathbb{R}$. Recall from \cite[Section 4]{Casselman-Globalization-HC-module-89} that the topology on $I_{ P \cap L}^L(\sigma)$ is inherited from the semi-norms $N$ defining the topology of $\sigma$ by associating to them 
the semi-norms $N_{X}(f)=\mathrm{sup}_{k\in K_L} N(R_Xf(k))$ where $K_L=K\cap L$, $X \in U(\mathfrak{l})$ and $R_X$ is the action of the universal enveloping algebra $U(\mathfrak{l})$ of $L$. Then the continuity of $\eta$ follows from the inequality 
\[|\eta(f_1)-\eta(f_2)|\leq \int_{(P\cap L)_x\backslash L_x} |\ell(f_1(h)-f_2(h))|dh,\] together with the continuity of $\ell$ on $V_\sigma$ which holds by definition of distinction in the archimedean case. 
	By Lemma \ref{lem::estimation-abs-generalizedMC}, there exists $r > 0$ such that
	\begin{align*}
		 \eta (I_{ P \cap L}^L(l,\sigma) f) \leqslant C_f \lVert L_x l \rVert^r_{L_x \backslash L},\ \ \ l\in L.
	\end{align*}
As $\eta$ is $L_x$-invariant, by Lemma \ref{lem::abs convergence--generalization-BD}, there exists $c > 0$ such that for all $F_{\varphi}  \in I_Q^G\left(I_{P \cap L}^L \sigma \right)$ and $\lambda \in \mathcal{D}_{L,x}(c)$, the integral
	\begin{align*}
		\int_{Q_x \backslash G_x} \eta((F_{\varphi})_{\lambda} (g) ) dg 
	\end{align*}
	converges. Here $F_{\varphi}$ is the image of $\varphi \in I_P^G(\sigma)$ under the isomorphism $F$ in Section \ref{sec::para-induction+intertw-operator}. Note that $\mathcal{D}_{L,x}(c) = \mathcal{D}_{M,x}(c)$ by Proposition \ref{prop::property-maximal} and that we have 
	\begin{align*}
		\int_{Q_x \backslash G_x} \eta((F_{\varphi})_{\lambda} (g) ) dg  = \int_{Q_x \backslash G_x} \int_{ (P \cap L)_x \backslash L_x}  \lvert \ell (\varphi_{\lambda} (hg)) \rvert dh dg
	\end{align*}
	by \eqref{formula::holo-section-transitivity}. Thus, the absolute convergence follows by Fubini's theorem. Define $\Lambda_{\ell} \in \Hom_{L_x}(I_{ P \cap L}^L \sigma,\mathbf{1})$ by
	\begin{align}\label{formula::defn-Gamma-ell}
		\Lambda_{\ell} (f) = \int_{ (P \cap L)_x \backslash L_x} \ell (f(h)) dh.
	\end{align}
	In view of \eqref{formula::main-1-1}, now well defined by a convergent integral, we have
	\begin{align*}
		J_P^G(\varphi;x,\ell,\sigma,\lambda) = J_Q^G(F_{\varphi};x,\Lambda_{\ell},I_{P \cap L}^L \sigma,\lambda).
	\end{align*}
	Therefore the meromorphic continuation follows from Proposition \ref{prop::Blanc-Delorme}. 
	
	For the proof of convergence in the general case as well as the functional equations, let $(M,x)$ be a vertex such that $\delta_x\equiv 1$ in $\mathfrak{G}$ and apply Proposition \ref{prop::maximal vertice-path}  to obtain a path
	\begin{align*}
		(M_1,x_1) \stackrel{n_{\alpha_1}}{\searrow} (M_2,x_2) \stackrel{n_{\alpha_2}}{\searrow} \cdots \stackrel{n_{\alpha_k}}{\searrow} (M_{k+1},x_{k+1})=(M,x)
	\end{align*}
	with $n_{\alpha_i} \in s_{\alpha_i} M_i$ for each $i=1,\dots,k$ and $(M_1,x_1)$ a maximal vertex. By \cite[Corollary 6.5]{Offen-ParabolicInduction-JNT} we also have $\delta_{x_1}\equiv 1$. Write $w_i = s_{\alpha_k}s_{\alpha_{k-1}} \cdots s_{\alpha_i}$ and $\lambda_i=w_i^{-1}(\lambda)$. Then for each edge we have an intertwining operator $M(n_i,w_i^{-1}\sigma,\lambda_i)$ which takes $I_{P_i}^G(w_i^{-1}\sigma,\lambda_i)$ into $I_{P_{i+1}}^G( w_{i+1}^{-1} \sigma, s_{\alpha_i}\lambda )$, where $P_i$ is the standard parabolic subgroup with Levi subgroup $M_i$. Suppose that $F$ is $p$-adic for a moment, by Proposition \ref{prop::invert--intertwinner} we can choose $c > 0$ large enough such that when $\lambda_i \in \mathcal{D}^{M_i,s_{\alpha_i}} (c)$ the intertwining operator $M(n_i,w_i^{-1}\sigma,\lambda_i)$ is an isomorphism for every $i$. Note that by our definition of an edge in the graph $\mathcal{D}_{M_i,x_i} (c) \subset \mathcal{D}^{M_i, s_{\alpha_i}} (c) \cap (\fra_{M_i}^*)^-_{x_i}$. By \cite[Lemma 5.2.1, (1)]{Lapid-Rogawski-periods-Galois} we have $s_{\alpha_i} \mathcal{D}_{M_i,x_i} (c) \subset \mathcal{D}_{M_{i+1},x_{i+1}}(c)$. Now we take $C = w_1 \mathcal{D}_{M_1,x_1}(c)  \subset \mathcal{D}_{M,x}(c) \subset (\fra^*_M)^-_x$. The required absolute convergence and functional equations follow directly from Proposition \ref{prop::intt-periods-path}. When $F=\BR$ more work is required as we need might need to exclude some affine hyperplanes to assume irreducibility of induced representations even for a sufficiently positive parameter. Let $L_i$ be the Levi subgroup generated by $M_i$ and the root subgroups $U_{\pm \alpha_i}$, in which $M_i$ and $M_{i+1}=s_{\alpha_i}(M_i)$ are maximal. Then $\mathfrak{a}_{L_i}^*$ is a hyperplane of $\mathfrak{a}_{M_i}^*$. From the proof of Proposition 2.3., the countable number of hyperplanes for which $M(n_i,w_i^{-1}\sigma,\lambda_i)$ could be non invertible are translates 
	of $\mathfrak{a}_{L_i}^*$, i.e. of the form $v+\mathfrak{a}_{L_i}^*$ for $v\in \mathfrak{a}_{M_i}^*$. Now 
	if $(\mathfrak{a}_{M_i}^*)_{x_i}^-$ was contained in such a hyperplane, it would be contained in 
	$\mathfrak{a}_{L_i}^*$. However, this would imply that $\mathfrak{a}_{L_i}^*$ is stable under $\theta_{x_i}-Id$, hence under 
	$\theta_{x_i}$. This contradicts the graph condition $\theta_{x_i}(\alpha_i)\neq -\alpha_i$ but $\theta_{x_i}(\alpha_i)<0$. Following the proof of absolute convergence in the $p$-adic case we now know that the intertwining periods converge at least outside a countable number of affine hyperplanes of $(\mathfrak{a}_{M,\BC}^{\ast})_x^- \cap \mathcal{D}_{M,x}(c)$ for some large enough $c$. However, if $J_P^G (\varphi;x,\ell,\sigma,\lambda_1)$ and $J_P^G (\varphi;x,\ell,\sigma,\lambda_2)$ converge absolutely, then $J_P^G (\varphi;x,\ell,\sigma,\lambda)$ converges for $\Re  \lambda$ in the segment joining $\Re \lambda_1 $ and  $\Re \lambda_2 $ by convexity of the real exponential map. This proves the absolute convergence of $J_P^G (\varphi;x,\ell,\sigma,\lambda)$ in some positive enough cone $\mathcal{D}_{M,x}(c)$.

As for the meromorphic continuation, if $(M,x)$ is a minimal vertex it follows from the arguments in \cite[Corollary 1]{Matringe-Offen-InttPeriods-PLMS}\footnote{The assumption that $(G,\theta)$ is Galois is not needed, only the assumption that $\delta_x\equiv 1$ is used.}. The arguments work equally well when $F = \BR$ once taken into account of some continuity properties established in \cite[Lemma 7, Theorem 2]{Brylinski-Delorme-H-inv-form-MeroExtension-Invention}. For a general vertex we may now use the functional equations obtained in Proposition \ref{prop::intt-periods-path} (combined with \cite[Corollary 6.5]{Offen-ParabolicInduction-JNT}) to deduce the meromorphic continuation from the minimal vertex case. 
\end{proof}

\subsection{Sufficient condition for distinction}

In this section we generalize \cite[Proposition 3]{Matringe-Offen-InttPeriods-PLMS} to the context of any symmetric pair and remove the boundedness assumption there for the inducing data. Let $P=M\ltimes U$ be a parabolic subgroup of $G$.
\begin{thm}\label{thm::distinction-sufficient}
	Let $\sigma$ be a finite length representation of $M$, which is assumed to satisfy Assumption \ref{as} when $F = \BR$. If there exists $x \in X$ such that $\theta_x(M) = M$ and $\sigma$ is $(M_x,\delta_x)$-distinguished, then $I_P^G(\sigma,\lambda)$ is $G_x$-distinguished for all $\lambda \in (\fra^*_{M,\BC})^-_x$.
\end{thm}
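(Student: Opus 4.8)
The plan is to reduce the assertion, via the Lapid--Rogawski graph $\mathfrak{G}$, to the case of a maximal vertex, where the relevant period is a convergent \emph{iterated} integral (an inner closed period composed with an outer open one), and then to climb back down the graph using the functional equations of Proposition \ref{prop::intt-periods-path} together with the generic invertibility of standard intertwining operators (Proposition \ref{prop::invert--intertwinner}). Since $\theta_x(M)=M$ the pair $(M,x)$ is a vertex of $\mathfrak{G}$; fix a nonzero $\ell\in\Hom_{M_x}(\sigma,\delta_x)$. By Proposition \ref{prop::maximal vertice-path} choose a path
\[
(M_1,x_1)\stackrel{n_1}{\searrow}(M_2,x_2)\stackrel{n_2}{\searrow}\cdots\stackrel{n_k}{\searrow}(M_{k+1},x_{k+1})=(M,x)
\]
with $(M_1,x_1)$ maximal, $n_i\in s_{\alpha_i}M_i$, $\alpha_i\in\Delta_{M_i}$; let $P_i=M_i\ltimes U_i$ be the standard parabolic with Levi $M_i$. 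Let $\sigma_i$ be the representations of $M_i$ with $\sigma_{k+1}=\sigma$ and $\sigma_{i+1}=s_{\alpha_i}\sigma_i$: they all act on $V_\sigma$, $\sigma_1$ is a Weyl conjugate of $\sigma$ (hence of finite length, still satisfying Assumption \ref{as} when $F=\BR$), and by \cite[Corollary 6.5]{Offen-ParabolicInduction-JNT} applied along each edge, $\ell$ lies in $\Hom_{(M_i)_{x_i}}(\sigma_i,\delta_{x_i})$ for all $i$ and is nonzero there. Write $\lambda_i\in(\fra^*_{M_i,\BC})^-_{x_i}$ for the parameter attached to $\lambda\in(\fra^*_{M,\BC})^-_x$, so $\lambda_{k+1}=\lambda$ and $\lambda_{i+1}=s_{\alpha_i}\lambda_i$.

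\emph{The maximal vertex.} Put $L=L_{M_1,x_1}$ and $Q=L\ltimes V$. By Lemma \ref{lem::max vertice}, $P_1\cap L$ is $\theta_{x_1}$-stable with $\theta_{x_1}$-stable Levi $M_1$, and $\theta_{x_1}(Q)=Q^-$; moreover, exactly as in the first part of the proof of Lemma \ref{lem::estimation-abs-generalizedMC} — which uses only maximality, never the modulus assumption — one has $(P_1)_{x_1}=(P_1\cap L)_{x_1}$ and $\delta_{x_1}=\delta_{(P_1\cap L)_{x_1}}\delta_{P_1\cap L}^{-1/2}$ on $(M_1)_{x_1}$. Hence by Lemma \ref{lem::closed}, applied inside $(L,\theta_{x_1}|_L)$, the closed intertwining period
\[
\Lambda_\ell(f)=\int_{(P_1\cap L)_{x_1}\backslash L_{x_1}}\ell(f(h))\,dh,\qquad f\in I_{P_1\cap L}^L(\sigma_1),
\]
converges absolutely, is entire, is continuous when $F=\BR$, and is a nonzero element of $\Hom_{L_{x_1}}(I_{P_1\cap L}^L(\sigma_1),\mathbf{1})$. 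Since $I_{P_1\cap L}^L(\sigma_1)$ has finite length, $\theta_{x_1}(Q)=Q^-$, and $\Lambda_\ell$ is $L_{x_1}$-\emph{invariant}, Proposition \ref{prop::Blanc-Delorme} applies inside $(G,\theta_{x_1})$ to the parabolic $Q$ and the form $\Lambda_\ell$ (the convergence input being the finite-length case covered in its proof, i.e. the estimate of Lemma \ref{lem::estimation-abs-generalizedMC} fed into Lemma \ref{lem::abs convergence--generalization-BD} for $\eta(f)=\int_{(P_1\cap L)_{x_1}\backslash L_{x_1}}|\ell(f(h))|\,dh$). Via transitivity of parabolic induction and the identity $(\fra^*_{M_1,\BC})^-_{x_1}=(\fra^*_{L,\BC})^-_{x_1}$ of Proposition \ref{prop::property-maximal}, (1), this yields a meromorphic family $\lambda_1\mapsto\widetilde L_{1,\lambda_1}\in\Hom_{G_{x_1}}(I_{P_1}^G(\sigma_1,\lambda_1),\mathbf{1})$, nonzero at each of its regular points, which by the unfolding \eqref{formula::main-1-1} of Theorem \ref{thm::main-1} equals $\varphi_1\mapsto\int_{(P_1)_{x_1}\backslash G_{x_1}}\ell((\varphi_1)_{\lambda_1}(g))\,dg$, an absolutely convergent integral, whenever $\Re\lambda_1\in\mathcal{D}_{M_1,x_1}(c)=\mathcal{D}_{L,x_1}(c)$.

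\emph{Descending the graph.} Set $\mathcal{M}_\lambda=\mathcal{M}_{n_k}\circ\cdots\circ\mathcal{M}_{n_1}\colon I_{P_1}^G(\sigma_1,\lambda_1)\to I_P^G(\sigma,\lambda)$ with $\mathcal{M}_{n_i}=M(n_i,\sigma_i,\lambda_i)\,I_{P_i}^G(n_i,\sigma_i,\lambda_i)$. For $\Re\lambda_1$ in the above cone and $\lambda$ outside the locally finite union of affine hyperplanes on which some $\mathcal{M}_{n_i}$ is not invertible (Proposition \ref{prop::invert--intertwinner}, using Assumption \ref{as} when $F=\BR$), repeated application of Proposition \ref{prop::intt-periods-path} along the edges shows that each $\varphi_i\mapsto\int_{(P_i)_{x_i}\backslash G_{x_i}}\ell((\varphi_i)_{\lambda_i}(g))\,dg$ is absolutely convergent and that $\widetilde L_{1,\lambda_1}=\widetilde J_\lambda\circ\mathcal{M}_\lambda$, where $\widetilde J_\lambda(\varphi)=\int_{P_x\backslash G_x}\ell(\varphi_\lambda(g))\,dg\in\Hom_{G_x}(I_P^G(\sigma,\lambda),\mathbf{1})$. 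Consequently $\widetilde J_\lambda=\widetilde L_{1,\lambda_1}\circ\mathcal{M}_\lambda^{-1}$ is nonzero on this nonempty open set; since $\mathcal{M}_\lambda^{-1}$ is a meromorphic family of operators, $\lambda\mapsto\widetilde L_{1,\lambda_1}\circ\mathcal{M}_\lambda^{-1}$ is a meromorphic family of linear forms on $I_P^G(\sigma,\lambda)$, not identically zero, and $G_x$-invariant wherever regular (the $G_x$-invariance, valid on the open set just described, propagates by meromorphic continuation). Finally, given $\lambda_0\in(\fra^*_{M,\BC})^-_x$, restrict to a generic complex line $t\mapsto\lambda_0+t\mu$ inside $(\fra^*_{M,\BC})^-_x$ along which this family is not identically zero, write $\sum_{j\ge j_0}t^jJ_j$ (with $J_{j_0}\neq 0$) for its Laurent expansion in $t$, and match lowest-order terms in the $G_x$-invariance identity, using that $t\mapsto I_P^G(g,\sigma,\lambda_0+t\mu)$ is holomorphic with value $I_P^G(g,\sigma,\lambda_0)$ at $t=0$ for each fixed $g\in G_x$; this produces $J_{j_0}\in\Hom_{G_x}(I_P^G(\sigma,\lambda_0),\mathbf{1})\setminus\{0\}$, so $I_P^G(\sigma,\lambda_0)$ is $G_x$-distinguished for every $\lambda_0$. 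The main difficulty is this descending step: without the modulus assumption the intertwining period at a general vertex need not be a single absolutely convergent integral, so one must build the $G_x$-invariant form by composing the (convergent, iterated) maximal-vertex period with an inverse of a product of standard intertwining operators — whence the genuine use of their generic invertibility — and then propagate $G_x$-invariance and reach all $\lambda_0$ by meromorphic continuation and leading Laurent terms.
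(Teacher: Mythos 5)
Your strategy is essentially the paper's: anchor the construction at a maximal vertex $(M_1,x_1)$, where the period is a convergent \emph{iterated} integral (the closed inner period $\Lambda_\ell$ of Lemma \ref{lem::closed}, which absorbs the modulus discrepancy, fed into the open Blanc--Delorme period of Proposition \ref{prop::Blanc-Delorme}); transport the resulting nonzero meromorphic family to $(M,x)$ through the graph using standard intertwining operators and Proposition \ref{prop::invert--intertwinner}; and extract a leading Laurent coefficient along a generic line. The only packaging difference is that the paper runs a reverse induction on the weight, at each step composing the higher-weight form with the \emph{forward} operator $M(n,\sigma,\lambda)\colon I_P^G(\sigma,\lambda)\to I_{P_1}^G(s_\alpha\sigma,s_\alpha\lambda)$, so that generic invertibility is needed only to see the composite is nonzero, whereas you compose once with $\mathcal{M}_\lambda^{-1}$; the two differ by a meromorphic scalar and are interchangeable.

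There is, however, one step in your descending paragraph that is not justified and should be excised: the claim that the periods $\varphi_i\mapsto\int_{(P_i)_{x_i}\backslash G_{x_i}}\ell((\varphi_i)_{\lambda_i}(g))\,dg$ — starting with the one at the maximal vertex — are absolutely convergent, obtained by ``unfolding'' as in Theorem \ref{thm::main-1} and then chaining Proposition \ref{prop::intt-periods-path}. Theorem \ref{thm::main-1} and the estimates behind it (Lemma \ref{lem::estimation-abs-generalizedMC} fed into Lemma \ref{lem::abs convergence--generalization-BD}) require $\delta_{x_1}\equiv\mathbf{1}$, since they rest on Lagier/KSS bounds for genuinely $M_{x_1}$-\emph{invariant} functionals; without the modulus assumption the double integral with absolute values is not known to converge, Fubini cannot be invoked, and the hypothesis of Proposition \ref{prop::intt-periods-path} (absolute convergence of the left-hand side) is not verified. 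This matters because you derive the $G_x$-invariance of $\widetilde L_{1,\lambda_1}\circ\mathcal{M}_\lambda^{-1}$ from that integral identity. The repair is immediate and is what the paper does: take the composition as the \emph{definition} of the family and obtain $G_x$-invariance purely from equivariance — each $\mathcal{M}_{n_i}=M(n_i,\sigma_i,\lambda_i)\circ I_{P_i}^G(n_i,\sigma_i,\lambda_i)$ satisfies $\mathcal{M}_{n_i}\circ I_{P_i}^G(g)=I_{P_{i+1}}^G(n_ign_i^{-1})\circ\mathcal{M}_{n_i}$ and $G_{x_{i+1}}=n_iG_{x_i}n_i^{-1}$, so invariance transports along the path with no convergence input. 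Your closing sentence in fact concedes this point, so the argument is internally inconsistent rather than irreparable. A last small omission: when $F=\mathbb{R}$ the leading Laurent coefficient is a priori produced only on the $K$-finite vectors and must then be extended continuously to $I_P^G(\sigma,\lambda_0)$, as the paper does via \cite{Brylinski-Delorme-H-inv-form-MeroExtension-Invention}.
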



\begin{proof}
	 We prove by reverse induction on the weight of $(M,x)$ that for $0 \ne \ell \in \Hom_{M_x} (\sigma, \delta_x )$ there exists a non-zero meromorphic family of linear forms $j_P^G(x,\ell,\sigma,\lambda)$ such that the linear form $j_P^G(x,\ell,\sigma,\lambda)$ lies in $\Hom_{G_x} (I_P^G(\sigma,\lambda),\mathbf{1})$ if holomorphic at $\lambda \in (\fra_{M,\BC}^*)^-_x$.

	 For the base of induction assume that $(M,x)$ is a maximal vertex in $\mathfrak{G}$. Let $Q = L \ltimes V$ be the associated parabolic subgroup of $G$ with $L=L_{M,x}$. Define 
	\begin{align}\label{formula::distinction-1}
		j_P^G(\varphi;x,\ell,\sigma,\lambda) = \int_{Q_x \backslash G_x} \int_{ (P \cap L)_x \backslash L_x} \ell (\varphi_{\lambda} (hg)) dh dg,
	\end{align}
	where the double integral is understood as an iterated integral. We showed in the proof of Theorem \ref{thm::main-1} that the iterated double integral formally makes sense.  Define $\Lambda_{\ell} \in \Hom_{L_x}(I_{P \cap L}^L \sigma, \mathbf{1})$ as in \eqref{formula::defn-Gamma-ell}. Then as we have already seen, also in the proof of Theorem \ref{thm::main-1},
	\begin{align}\label{formula::distinction-2}
		j_P^G(\varphi;x,\ell,\sigma,\lambda) = \int_{Q_x \backslash G_x} \Lambda_{\ell} ((F_{\varphi})_{\lambda}(g)) dg.
	\end{align}
	Therefore by Proposition \ref{prop::Blanc-Delorme}, the integral in \eqref{formula::distinction-2} converges absolutely in a cone and admits a meromorphic continuation in $\lambda \in (\fra_{M,\BC}^*)^-_x$. Since $\ell\ne 0$, $\Lambda_{\ell}$ is non-zero by Lemma \ref{lem::closed} and therefore the meromorphic family $j_P^G(x,\ell,\sigma,\lambda)$ is non-zero by Proposition \ref{prop::Blanc-Delorme}. 	
	
	Assume now that $(M,x)$ is not maximal. By Proposition \ref{prop::max-t-involution-characterization} we choose $\alpha \in \Delta_M$ such that $\theta_x(\alpha) > 0$ and $\theta_x(\alpha) \ne \alpha$. Take $n \in s_{\alpha} M$ and let $M(n,\sigma,\lambda)$ be the intertwining operator taking $I_P^G(\sigma,\lambda)$ into $I_{P_1}^G(s_{\alpha}\sigma,s_{\alpha} \lambda)$, where $P_1$ is the parabolic subgroup with Levi subgroup $s_{\alpha} M s_{\alpha}^{-1}$. By the induction hypothesis there is a non-zero meromorphic family of $G_{n \cdot x}$-invariant linear forms $j_{P_1}^G(n\cdot x,s_{\alpha}\sigma,s_\alpha\lambda)$ on $I_{P_1}^G(s_{\alpha}\sigma, s_\alpha\lambda)$. Set $j_P^G(x,\ell,\sigma,\lambda)= j_{P_1}^G(n\cdot x,s_{\alpha}\sigma,s_{\alpha}\lambda) \circ M(n,\sigma,\lambda)$. It follows from the induction hypothesis and Proposition \ref{prop::invert--intertwinner}, and also by the proof of Theorem \ref{thm::main-1} when $F = \BR$, that this defines a non-zero meromorphic family of $G_x$-invariant linear forms as desired. This completes our induction. 

Now when $F$ is $p$-adic, by taking the leading term at $\lambda$ of the restriction of $j_P^G(x,\ell,\sigma,\lambda)$ to a complex line through $\lambda$ in general position, we then get a non-zero $G_x$-invariant linear form on $I_P^G(\sigma,\lambda)$. When $F=\mathbb{R}$ the following extra technicality occurs: one first considers the restriction of $j_P^G(x,\ell,\sigma,\lambda)$ to the space of $K$-finite vectors $I_P^G(\sigma,\lambda)_f$, which is nonzero by density of these vectors and continuity of $j_P^G(x,\ell,\sigma,\lambda)$ for regular $\lambda$. Then the leading term argument produces a non-zero $G_x$-invariant linear form on $I_P^G(\sigma,\lambda)_f$, which extends to a non-zero $G_x$-invariant continuous linear form on $I_P^G(\sigma,\lambda)_f$ by \cite[Théorème 2]{Brylinski-Delorme-H-inv-form-MeroExtension-Invention}. The theorem follows.
\end{proof}
Finally, when $F$ is $p$-adic, combined with the geometric lemma we deduce a distinction criterion for representations induced from cuspidal.
\begin{cor}
Let $\sigma$ be a finite length, cuspidal representation of $M$ and $x\in X$. The representation $I_P^G(\sigma)$ is $G_x$-distinguished if and only If there exists $y \in G\cdot x$ such that $\theta_y(M) = M$ and $\sigma$ is $(M_y,\delta_y)$-distinguished.
\end{cor}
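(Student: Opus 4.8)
The \emph{if} direction is immediate from Theorem~\ref{thm::distinction-sufficient}: if $y\in G\cdot x$ satisfies $\theta_y(M)=M$ and $\sigma$ is $(M_y,\delta_y)$-distinguished, then that theorem, applied with $\lambda=0$, shows that $I_P^G(\sigma)$ is $G_y$-distinguished; since $y\in G\cdot x$ the subgroups $G_y$ and $G_x$ are conjugate in $G$, so $I_P^G(\sigma)$ is $G_x$-distinguished as well. Note that here $x$ need not lie in $G\cdot e$, which is why the general-$x$ form of Theorem~\ref{thm::distinction-sufficient} is used.

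For the \emph{only if} direction the plan is to feed cuspidality of $\sigma$ into the geometric lemma for $p$-adic symmetric spaces. Restrict $I_P^G(\sigma)$ to $G_x$ and use the identification $P\backslash G/G_x\cong P\backslash(G\cdot x)$. Mackey theory then provides a finite filtration of $I_P^G(\sigma)|_{G_x}$ whose graded pieces are indexed by the $P$-orbits $\CO$ in $G\cdot x$, the piece attached to $\CO$ being, for a chosen representative $y\in\CO$, a $G_x$-representation compactly induced from a parabolic-type subgroup and built out of a normalized Jacquet module of $\sigma$ along a parabolic subgroup of $M$ determined by $\CO$, twisted by an explicit modulus character (see \cite{Offen-ParabolicInduction-JNT}). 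As $\Hom_{G_x}(-,\mathbf{1})$ is left exact and the filtration is finite, $\Hom_{G_x}(I_P^G(\sigma),\mathbf{1})\ne 0$ forces the graded piece attached to some orbit $\CO$ to be $G_x$-distinguished, and by Frobenius reciprocity this amounts to distinction of the corresponding Jacquet module of $\sigma$ against that explicit character.

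Now cuspidality enters: every proper Jacquet module of $\sigma$ vanishes, so the only orbits $\CO$ that can contribute are those whose associated parabolic subgroup of $M$ is $M$ itself, and by the analysis of \cite{Offen-ParabolicInduction-JNT} these are precisely the $M$-admissible $P$-orbits. For such an orbit $\CO$ we may, by \cite[Lemma~3.2]{Offen-ParabolicInduction-JNT}, choose the representative $y$ inside $\CO\cap N_{G,\theta}(M)$, so that $\theta_y(M)=M$; the distinction extracted above then reads exactly that $\sigma=r_M^M(\sigma)$ is $(M_y,\delta_y)$-distinguished. Since $y\in\CO\subseteq G\cdot x$, this is the required element, and combined with the first paragraph the corollary follows.

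The one point that requires care — and which I would borrow from \cite{Offen-ParabolicInduction-JNT} rather than redo — is the bookkeeping in the geometric lemma: that the twisting character on the graded piece of an $M$-admissible orbit with representative $y\in N_{G,\theta}(M)$ is exactly $\delta_y=\delta_{P_y}\delta_P^{-1/2}$, and that a non-$M$-admissible orbit genuinely forces a proper Jacquet module of $\sigma$ to be non-zero. This is also the reason the statement is confined to the $p$-adic case, where the geometric lemma is available in the requisite Jacquet-module form.
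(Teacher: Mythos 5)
Your proposal is correct and follows essentially the same route as the paper: the \emph{if} direction is Theorem~\ref{thm::distinction-sufficient} at $\lambda=0$ (with the conjugacy of $G_y$ and $G_x$), and the \emph{only if} direction is exactly the geometric lemma of \cite[Theorem 4.2]{Offen-ParabolicInduction-JNT} combined with the vanishing of proper Jacquet modules of the cuspidal $\sigma$, which the paper cites without spelling out. Your expansion of the Mackey filtration argument is a faithful unpacking of that citation rather than a different proof.
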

\begin{proof}
The `if' part is an immediate consequence of Theorem \ref{thm::distinction-sufficient}. The `only if' part follows from \cite[Theorem 4.2]{Offen-ParabolicInduction-JNT} and the cuspidality of $\sigma$.
\end{proof}

\begin{rem}
When $F=\mathbb{R}$, Theorem \ref{thm::distinction-sufficient} shows that the necessary conditions for distinction of standard modules obtained in \cite[Theorem 1.3]{Kemarsky-dist} (or rather in its proof) and \cite[Theorem 1.2]{Tamori-Suzuki} are also sufficient.
\end{rem}



\end{document}